\def\ep{\varepsilon}
\newcommand{\dom}{\operatorname{dom}}
\newcommand{\sign}{\operatorname{sign}}
\newcommand{\Argmin}{\operatornamewithlimits{Argmin}}
\newcommand{\argmin}{\operatornamewithlimits{argmin}}
\def\E{\mathbb{E}}
\newcommand{\norm}[1]{\left\|#1\right\|}
\newcommand{\innprod}[2]{\left\langle#1,#2\right\rangle}
\def\RR{\mathbb{R}}
\def\cB{{\mathcal B}}
\def\ZZ{\mathbb{Z}}
\def\cT{\mathcal{T}}
\declaretheorem{theorem}
\declaretheorem{corollary}
\declaretheorem{lemma}
\declaretheorem[style=definition,qed=\qedsymbol]{example}
\declaretheorem[style=remark,qed=\qedsymbol]{remark}
\declaretheorem[style=definition,qed=\qedsymbol]{assumption}
\begin{document}

\title{A Parameter-Free Conditional Gradient Method for Composite Minimization under H\"older Condition}

\author{
Masaru Ito
\thanks{
Department of Mathematics, College of Science and Technology, Nihon University, Japan (Email: {\tt ito.masaru@nihon-u.ac.jp}).}
\and
Zhaosong Lu
\thanks{
Department of Industrial and Systems Engineering, University of Minnesota, USA (Email: {\tt zhaosong@umn.edu}, {\tt he000233@umn.edu}).}
\and
Chuan He 
\footnotemark[2]
}
\date{May 28, 2023}

\maketitle

\begin{abstract}
In this paper we consider a composite optimization problem that minimizes the sum of a weakly smooth function and a convex function with either a bounded domain or a uniformly convex structure. In particular, we first present a parameter-dependent conditional gradient method for this problem, whose step sizes require prior knowledge of the parameters associated with 
the H\"older continuity of the gradient of the weakly smooth function, and establish its rate of convergence. Given that these parameters could be unknown or known but possibly conservative,  such a method may suffer from implementation issue or slow convergence. 
We therefore propose a parameter-free conditional gradient method whose step size is determined by using a constructive local quadratic upper approximation and an adaptive line search scheme, without using any problem parameter.
We show that this method achieves the same rate of convergence as the parameter-dependent conditional gradient method. 
Preliminary experiments are also conducted and illustrate the superior performance of the parameter-free conditional gradient method over the methods with some other step size rules.
\\

\noindent
\textbf{Keywords:} Conditional gradient method, H\"older continuity, uniform convexity, adaptive line search, iteration complexity
\\
\end{abstract}


\section{Introduction}

In this paper we consider a composite optimization problem in the form of
\begin{equation}\label{main-prob}
\varphi^*=\min_{x \in \E} \left\{\varphi(x):= f(x)+g(x)\right\},
\end{equation}
where $\E$ is a finite dimensional real Hilbert space endowed with an inner product 
$\innprod{\cdot}{\cdot}$, and the functions $f,g:\E\to \RR\cup\{+\infty\}$ are proper and 
lower-semicontinuous. Assume that $\varphi^*$ is finite and attainable, and that $g$ is a convex function, while $f$ is possibly nonconvex.

In the recent years conditional gradient methods \cite[e.g.,][]{Bach15,HJN15,Nes18,Ghadimi19} have been developed for solving problem \eqref{main-prob}, which generate the iterates $\{x_t\}$ by the following scheme:
\begin{equation}\label{cond-grad-scheme}
v_t \in \Argmin_{x\in \E}\{\innprod{\nabla{f}(x_t)}{x}+g(x)\},\quad x_{t+1}=(1-\tau_t)x_t+\tau_t v_t
\end{equation}
for each $t\geq 0$, where $\tau_t \in [0,1]$ is a step size chosen by a certain rule. 
These methods originate from the Frank-Wolfe method \cite[]{FW} that was initially proposed for quadratic programming and later studied for more general or structured problems  \cite[e.g.,][]{LevPol,DemRub,Dunn79,BT04}.
Conditional gradient methods have found rich applications in machine learning and statistics, where $f$ and $g$ are typically a loss function and a regularizer, respectively. The advantage of conditional gradient methods in these applications is that the solution $v_t$ to the subproblem in \eqref{cond-grad-scheme} is efficiently computable and also has some desirable property, such as preserving sparsity or low rank  \cite[]{Hazan08,Cla10,Jaggi11,HJN15,FG16,Nes18}.

The conditional gradient methods provide a computable quantity $\delta_t$, commonly referred to as the Frank-Wolfe gap,  which is given by
\begin{equation}\label{def-FW-gap}
\delta_t := \innprod{\nabla{f}(x_t)}{x_t-v_t} + g(x_t)-g(v_t) = \max_{x \in \E}\{\innprod{\nabla f(x_t)}{x_t-x}+g(x_t)-g(x)\} \geq 0.
\end{equation}
Notice that $\delta_t =0$ if and only if $x_t$ is a stationary point of $\varphi$. In addition, if $f$ is convex, one can have $\varphi(x_t)-\varphi^* \le \delta_t$ (see Lemma~\ref{lem:FW-gap-bound-convex}). Consequently, $\delta_t \leq \ep$ is often used as a termination criterion for the conditional gradient methods.

The choice of step sizes is crucial for the performance of the conditional gradient methods, which is typically measured by the iteration complexity, namely,  the worst-case number of iterations for reaching $\delta_t \le \ep$ or $\varphi(x_t)-\varphi^* \le \ep$ for a prescribed tolerance $\ep>0$. There are numerous studies \cite[e.g.,][]{FW,LevPol,Dunn79,FG16} on the step size rule when $f$ is $L$-smooth, i.e., $\nabla f$ is Lipschitz continuous with constant $L$ under a norm $\norm{\cdot}$. For example, the choice 
\begin{equation}\label{eq:short-step size}
\tau_t = \min\left\{1, \frac{\delta_t}{L\norm{x_t-v_t}^2}\right\}
\end{equation}
guarantees an iteration complexity of $O(\ep^{-2})$ for reaching $\delta_t \le \ep$ \cite[]{Lac16}. When $f$ is additionally convex, it can be improved to $O(\ep^{-1})$, which can also be achieved by the step size $\tau_t = \frac{2}{t+2}$, a well-known choice for convex $f$ \cite[]{Jaggi11,FG16}.

More generally, step size rules were studied when $f$ is weakly smooth, that is, $\nabla f$ is H\"older continuous with exponent $\nu \in (0,1]$. In particular, when $f$ is additionally convex, the choice $\tau_t=\frac{2}{t+2}$ ensures an iteration complexity of $O(\ep^{-1/\nu})$ for reaching $\varphi(x_t)-\varphi^*\leq \ep$ \cite[]{Nes18}, which is known to be nearly optimal from complexity theory perspective \cite[]{GN15}.
When $g$ is strongly convex, \cite{Nes18} proposed the step size $\tau_t = \frac{6(t+1)}{(t+2)(2t+3)}$ for obtaining a better iteration complexity of $O(\ep^{-\frac{1}{2\nu}})$. Recently, \cite{Ghadimi19} improved this complexity to  $O(\ep^{-\frac{1-\nu}{2\nu}}\log\frac{1}{\ep})$ by using  a step size $\tau_t$ determined by a backtracking line search procedure. Remarkably, his method enjoys a linear 
rate of convergence when $\nu=1$. Ghadimi's method is also applicable to the case where $f$ is nonconvex and achieves an iteration complexity of $O(\ep^{-\frac{1-\nu}{2\nu}-1})$ for reaching $\delta_t \leq \ep$.

Conditional gradient methods were also studied for minimizing an $L$-smooth convex function over a strongly convex set \cite[e.g., see][]{LevPol,Dunn79,GH15}. Under the assumption that the set does not contain a stationary point of the function, it was established that the conditional gradient method with the step size  given in \eqref{eq:short-step size} has a linear rate of convergence \cite[]{LevPol,Dunn79}. Such a method was also generalized to minimize an $L$-smooth convex function over a uniformly convex set \cite[]{Ker21a}.

In this paper we first study a parameter-dependent conditional gradient method proposed in \cite[Algorithm 4]{ZF20} with a step size depending explicitly on the problem parameters  for solving a broad class of problems in the form of \eqref{main-prob}, including but not limited to the problems considered in the above references \cite[]{LevPol,Dunn79,Jaggi11,GH15,FG16,Lac16,Nes18,Ghadimi19,Ker21a,ZF20}. Though this method was analyzed in \cite[]{ZF20} for problem \eqref{main-prob} with \emph{convex} $f$ and \emph{bounded} $\dom{g}$, there is a lack of analysis for \eqref{main-prob} with nonconvex $f$. In this paper, we analyze the rate convergence of this method for problem \eqref{main-prob} with $f$ being possibly nonconvex under the assumption that $\nabla{f}$ is H\"older continuous and $\dom{g}$ is bounded  or the problem has a uniformly convex structure.   As a byproduct, we obtain a new iteration complexity of  $O(\ep^{-\frac{1-\nu}{2\nu}})$\,\footnote{For simplicity, the complexity here only emphasizes its dependence on the tolerance parameter $\ep$, while the other parameters such as $\nu$ are viewed as a fixed constant and thus omitted. Its detailed expression including the dependence on the other parameters can be found in \eqref{itrcompl-2-2} with $\rho=2$.} for problem \eqref{main-prob} with $\nabla{f}$ being H\"older continuous with exponent $\nu\in(0,1)$ and $g$ being strongly convex, which improves by the factor $\log(1/\ep)$ the previously best known one \cite[]{Ghadimi19}. 

Though the aforementioned parameter-dependent method (Algorithm~\ref{alg:FW-exact-ls}) is simple and also enjoys a nice iteration complexity, its step size may suffer from some issues. Indeed, its step size requires prior knowledge of the parameters $\nu$ and $M_\nu$ associated with the H\"older continuity of $\nabla f$. Since they depend on $f$, $g$, and also a particular norm on $\E$, they may be hard to be found if $f$ is sophisticated. On another hand, the parameters $\nu$ and $M_\nu$ are not unique. The tighter value of them typically leads to a faster convergent algorithm. Yet, it may be challenging to find the tightest possible value for them. Motivated by these, we further propose a parameter-free conditional gradient method (see  Algorithm~\ref{alg:main}) in which the step size is chosen by using a constructive local quadratic upper approximation and an adaptive line search scheme, without using prior knowledge of $\nu$ and $M_\nu$.
 We show that this method achieves the same rate of convergence as the parameter-dependent conditional gradient method, which, however, uses prior knowledge of the problem parameters $\nu$ and $M_\nu$.

The results of this paper were presented in the SIAM Conference on Optimization in July 2021 \cite[]{ILH21}. During the preparation of this paper, a concurrent work \cite[]{Pena22} proposed a parameter-free conditional gradient method for problem \eqref{main-prob} with \emph{convex} $f$ and established similar complexity bounds as the ones obtained in this paper yet in terms of primal-dual optimality gap that is typically weaker than the Frank-Wolfe gap which we use.  It shall be mentioned that our analyses are vastly different from those in \cite[]{Pena22} and shed new insights into conditional gradients for solving a broader class of problems.
 
The rest of this paper is organized as follows.
In Section~\ref{sec:pre} we introduce some notation and make some assumptions on the problem studied in this paper.
In Section~\ref{sec:exact-ls} we propose a parameter-dependent conditional gradient method, and show some results on its rate of convergence. In Section~\ref{sec:main} we propose a parameter-free conditional gradient method, and establish its rate of convergence and also iteration complexity.
Section~\ref{sec:num} presents numerical experiments to compare the performance of this method with the conditional gradient methods with some other step size rules. The proofs of main results are given in Section~\ref{sec:proofs}.
Finally, we make some concluding remarks in Section~\ref{sec:conclusion}.


\section{Notation and Assumptions}\label{sec:pre}

Throughout the paper, $\E$ is a finite dimensional real Hilbert space endowed with an inner product $\innprod{\cdot}{\cdot}$. Let $\norm{\cdot}$ be an arbitrary norm on $\E$ and $\|\cdot\|_*$ its dual, i.e., $\norm{z}_* = \sup_{\norm{x}\leq 1}\innprod{z}{x}$ for $z\in \E$.
We denote by $\RR_+$ and $\ZZ_+$ the set of nonnegative real numbers and the set of nonnegative integers, respectively. For any real number $a$, we denote by $a_+$ the nonnegative part of $a$, that is, $a_+=\max\{a,0\}$. 
For the convex function $g$, 
$\dom{g}$ denotes the domain of $g$, i.e., $\dom{g}=\{x\in\E: g(x) \neq +\infty\}$. We denote 
by $D_g$ the diameter of $\dom g$, that is, 
\begin{equation} \label{Dg}
D_g=\sup_{x,y \in \dom g}\norm{x-y}.
\end{equation}
Clearly, $D_g=+\infty$ if $\dom{g}$ is unbounded.

We make the following assumption on the functions $f$ and $g$ throughout this paper.


\begin{assumption}\label{assump:fg}
\begin{enumerate}[(i)]
\item 
The function $f:\E\to \RR\cup\{+\infty\}$ is a proper and lower-semicontinuous function.
Moreover, $f$ is differentiable on $\dom{g}$ and the gradient $\nabla{f}$ is H\"older continuous on $\dom g$, i.e., there exist $\nu \in (0,1]$ and $M_\nu>0$ such that
\begin{equation}\label{assump:Holder}
\norm{\nabla{f}(x)-\nabla{f}(y)}_* \leq M_\nu \norm{x-y}^\nu,\quad \forall x,y \in \dom g.
\end{equation}
\item 
The function $g:\E\to\RR\cup\{+\infty\}$ is a proper, lower-semicontinuous, and convex function. In addition, for each $x \in \dom{g}$,  the subproblem
\begin{equation}\label{eq:subproblem}
\min_{v\in \E}\{\innprod{\nabla{f}(x)}{v} + g(v)\}.
\end{equation}
has at least one optimal solution. 
\end{enumerate}
\end{assumption}

The function $f$ satisfying Assumption~\ref{assump:fg}(i) is commonly referred to as a {\it $(\nu,M_\nu)$-weakly smooth function} on $\dom g$. 
There are many instances of problem \eqref{main-prob} satisfying Assumption~\ref{assump:fg}(i). For example, problem \eqref{main-prob} with $\nabla f$ being 
 semi-algebraic\footnote{A \emph{semi-algebraic set} is a finite union of sets of the form $\{x\in\E~|~p_i(x)=0,\, i=1,\ldots,k,~ q_j(x)<0,\, j=1,\ldots,l\}$ for some real polynomials $p_i,q_j$. A map $h:\RR^m\to\RR^n$ is said to be \emph{semi-algebraic} if its graph $\{(x,y)~|~y=h(x)\}$ is a semi-algebraic set.} continuous and $\dom{g}$ being a compact semi-algebraic set is one of them \cite[][Proposition C.1]{Bolte20}. Also, there are some machine learning models satisfying Assumption~\ref{assump:fg}(i) \cite[see, e.g.,][]{Bolte20,Piotr16,TJVP20}.
In addition, Assumption~\ref{assump:fg}(ii) plays an important role in a conditional gradient method for solving problem \eqref{main-prob}. Indeed, all the conditional gradient methods in the literature solve a subproblem in the form of \eqref{eq:subproblem} per iteration.

In this paper, we will consider problem \eqref{main-prob} satisfying Assumption \ref{assump:fg} and one additional assumption that $g$ has a bounded domain or problem \eqref{main-prob} has a uniformly convex structure introduced below.


\begin{assumption}\label{assump:g}
Problem \eqref{main-prob} has a \emph{uniformly convex structure}, that is, there exist $\kappa> 0$ and $\rho \geq 2$ such that for any $x \in \dom{g}$ and any optimal solution $v^*$ to the subproblem \eqref{eq:subproblem}, we have
\begin{equation}\label{eq:uniconv-subprob-ineq}
\innprod{\nabla{f}(x)}{v}+g(v) - \innprod{\nabla{f}(x)}{v^*}-g(v^*) \geq \frac{\kappa}{\rho} \norm{v-v^*}^\rho, \quad \forall v \in \E.
\end{equation}
\end{assumption}


There are many instances of problem \eqref{main-prob} with a uniformly convex structure. We next provide two examples of \eqref{main-prob} for which Assumptions~\ref{assump:fg}  and \ref{assump:g} hold. 

\begin{example}[Optimization over a uniformly convex set]\label{ex:uniconv-set}
Let $C \subset \E$ be a nonempty compact \emph{$(c,\rho)$-uniformly convex set with respect to $\norm{\cdot}$} for some $c>0$ and $\rho\geq 2$,\footnote{For example, the $\ell_p$-balls are uniformly convex  with $\rho=\max(2,p)$ for $p \in (1,\infty)$.  In addition, $C$ is often referred to as a \emph{strongly convex set} if $\rho=2$. See \cite{Vial82,LevPol,GH15,Ker21a,Ker21} for the discussion on uniformly or strongly convex sets.} that is, 
$C$ is a nonempty compact convex set satisfying that $(1-\lambda) x + \lambda y + z \in C$ for any $x,y \in C$, $\lambda \in [0,1]$, and $z$ with $\norm{z} \leq \lambda(1-\lambda)\frac{c}{\rho}\norm{x-y}^\rho$.
Consider the problem 
\begin{equation} \label{example1}
\min_{x\in C}f(x),
\end{equation}
where $f$ is differentiable on $C$, $\nabla{f}$ is H\"older continuous on $C$, and $\alpha = \min_{x \in C}\norm{\nabla{f}(x)}_* > 0$ (i.e., no stationary point of $f$ belongs to $C$).
Let $g$ be the indicator function of $C$. One can easily observe that problem \eqref{example1} is a special case of \eqref{main-prob}, and moreover, Assumption~\ref{assump:fg} holds for it. We next verify that Assumption~\ref{assump:g} also holds for it. Indeed, let $x \in \dom g$ and $v^* \in \Argmin_{v \in \E}\{\innprod{\nabla{f}(x)}{v}+g(v)\}$ be arbitrarily chosen. Then we have   
 that $x\in C$ and $v^* \in \Argmin_{v \in C}\innprod{\nabla{f}(x)}{v}$. Since $C$ is a $(c,\rho)$-uniformly convex set, one has that $w=\lambda v^* + (1-\lambda) v +[ \lambda(1-\lambda)c\norm{v-v^*}^\rho/\rho] z \in C$ for any $\lambda \in (0,1)$, $v\in C$, and $z$ with $\norm{z}\leq 1$. This and the optimality of $v^*$ lead to 
\[ 
\innprod{\nabla f(x)}{\lambda v^* + (1-\lambda) v + \lambda(1-\lambda)\frac{c}{\rho}\norm{v-v^*}^\rho z-v^*}=\innprod{\nabla f(x)}{w-v^*}\geq 0,
\] 
which implies that $\innprod{\nabla{f}(x)}{v-v^*} \geq (\lambda c/\rho)\norm{v-v^*}^\rho\innprod{\nabla{f}(x)}{-z}$. Taking $\sup_{\norm{z}\leq 1}$ on both sides of this inequality, letting $\lambda \uparrow 1$, and using $\alpha = \min_{x \in C}\norm{\nabla{f}(x)}_*$,  we obtain that 
\[
\innprod{\nabla{f}(x)}{v-v^*}  \geq \norm{\nabla{f}(x)}_* \cdot \frac{c}{\rho} \norm{v-v^*}^\rho \ge \frac{\alpha c}{\rho} \norm{v-v^*}^\rho.
\]
This, together with $g$ being the indicator function of $C$, implies that \eqref{eq:uniconv-subprob-ineq} is satisfied with $\kappa=\alpha c$. Therefore, Assumption~\ref{assump:g} holds for problem \eqref{example1}.
\end{example}


\begin{example}[Optimization with a uniformly convex function]\label{ex:uniconv-func}
Consider a special case of problem \eqref{main-prob}, where $\nabla{f}$ is H\"older continuous on $\dom{g}$, and $g$ is a proper, lower-semicontinuous and \emph{$(\kappa,\rho)$-uniformly convex function with respect to $\norm{\cdot}$} for some $\kappa>0$ and $\rho\geq 2$, that is, $g$ satisfies that  
$$
g(\lambda x + (1-\lambda)y) \leq \lambda g(x) + (1-\lambda) g(y) - \lambda(1-\lambda)\frac{\kappa}{\rho}\norm{x-y}^\rho, \quad \forall x,y \in \dom g, \lambda \in [0,1].\footnote{For the case $\rho=2$, the function $g$ is a usual strongly convex function with modulus $\kappa$.}
$$
By this, one can observe that \cite[e.g., see][]{Zal}
\begin{equation} \label{g-lwrbnd}
g(y) \geq g(x) + g'(x;y-x) + \frac{\kappa}{\rho}\norm{x-y}^\rho,\quad \forall x,y \in \dom g,
\end{equation}
where $g'(x;d) = \lim_{s\downarrow 0}(g(x+s d)-g(x))/s$ is the directional derivative of $g$ along the direction $d$. It then follows that $g$ is coercive, which together with the lower-semicontinuity of $g$ implies that subproblem \eqref{eq:subproblem} has at least one optimal solution. Thus, Assumption~\ref{assump:fg} holds for this problem. We next verify that Assumption~\ref{assump:g} also holds for it. Indeed, let $x \in \dom g$ and $v^*= \argmin_{v \in \E}\{\innprod{\nabla{f}(x)}{v}+g(v)\}$. By these and \eqref{g-lwrbnd}, one has
\[
\innprod{\nabla{f}(x)}{v}+g(v) \ge \innprod{\nabla{f}(x)}{v^*} + \innprod{\nabla{f}(x)}{v-v^*} +g(v^*)+ g'(v^*;v-v^*) + \frac{\kappa}{\rho}\norm{v-v^*}^\rho,
\]
for all $v \in \dom{g}$.
In addition, by the optimality of $v^*$, we have $\innprod{\nabla{f}(x)}{v-v^*} + g'(v^*;v-v^*) \ge 0$ for all $v \in \dom g$. These two inequalities immediately yield \eqref{eq:uniconv-subprob-ineq}. Therefore, Assumption~\ref{assump:g} also holds for this problem.
\end{example}


\section{A Parameter-Dependent Conditional Gradient Method}
\label{sec:exact-ls}

In this section we present in Algorithm~\ref{alg:FW-exact-ls} a parameter-dependent conditional gradient method for solving problem \eqref{main-prob}, whose step size $\tau_t$ depends on the problem parameters $\nu$ and $M_\nu$ explicitly. This algorithm was proposed in \cite[Algorithm 4]{ZF20} and analyzed by them for the case where $f$ is convex and $\dom{g}$ is bounded. However, it was not analyzed for the case where $f$ is nonconvex. In what follows, we will analyze its rate of convergence for solving \eqref{main-prob} with $f$ being possibly nonconvex under the assumption that $\dom{g}$ is bounded or problem \eqref{main-prob} has a uniformly convex structure. It shall be mentioned that the convergence results established in this section also hold for a variant of Algorithm~\ref{alg:FW-exact-ls}  with the exact step size $\tau_t \in \Argmin_{\tau \in [0,1]}\varphi((1-\tau)x_t + \tau v_t)$.


\begin{algorithm}[t]
\caption{A parameter-dependent conditional gradient method}
\label{alg:FW-exact-ls}
\textbf{Input:} $x_0 \in \dom{g}$.
\begin{algorithmic}[1]
\For{$t=0,1,2,\ldots,$}
	\State $v_t \in \Argmin_{x \in \E}\{\innprod{\nabla{f}(x_t)}{x} + g(x)\}$.
	\State $\delta_t = \innprod{\nabla{f}(x_t)}{x_t} + g(x_t) - \innprod{\nabla{f}(x_t)}{v_t} - g(v_t)$.
	\State $\tau_t = \min\left\{1,\left(\frac{\delta_t}{M_\nu\norm{x_t-v_t}^{1+\nu}}\right)^{\frac{1}{\nu}}\right\}$.
	\State $x_{t+1} =(1-\tau_t)x_t + \tau_t v_t$.
\EndFor
\end{algorithmic}
\end{algorithm}


Before proceeding, we state a well-known lemma \cite[e.g., see][]{Ghadimi19}, which shows that the quantity $\delta_t$, commonly referred to as the \emph{Frank-Wolfe gap}, provides an upper bound on the optimality gap of problem \eqref{main-prob}  at $x_t$ when $f$ is convex.  For the sake of completeness, we include a proof for it. 

\begin{lemma}\label{lem:FW-gap-bound-convex}
Let the sequences $\{x_t\}$ and $\{\delta_t\}$ be generated in Algorithm~\ref{alg:FW-exact-ls}.  
Suppose that $f$ is convex. Then it holds that $\delta_t \geq \varphi(x_t)-\varphi^*$ for all $t\geq 0$.
\end{lemma}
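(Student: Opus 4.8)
The plan is to combine the gradient inequality for the convex function $f$ with the max-representation of the Frank-Wolfe gap in \eqref{def-FW-gap}. First I would fix any $t\geq 0$ and let $x^*\in\dom g$ be an optimal solution of \eqref{main-prob}, which exists since $\varphi^*$ is assumed finite and attainable (if $g(x^*)=+\infty$ there is nothing to prove, so we may take $x^*\in\dom g$). I would also note that $x_t\in\dom g$: this follows by induction from $x_0\in\dom g$, the convexity of $\dom g$, and the update $x_{t+1}=(1-\tau_t)x_t+\tau_t v_t$ with $v_t\in\dom g$; hence $\nabla f(x_t)$ is well-defined by Assumption~\ref{assump:fg}(i).

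The key step is then a two-line chain. By convexity of $f$ and differentiability of $f$ at $x_t\in\dom g$,
\[
f(x_t)-f(x^*)\leq \innprod{\nabla f(x_t)}{x_t-x^*}.
\]
Adding $g(x_t)-g(x^*)$ to both sides gives
\[
\varphi(x_t)-\varphi^* = f(x_t)-f(x^*)+g(x_t)-g(x^*) \leq \innprod{\nabla f(x_t)}{x_t-x^*}+g(x_t)-g(x^*).
\]
Finally, by the max-representation of $\delta_t$ in \eqref{def-FW-gap}, the right-hand side is at most $\max_{x\in\E}\{\innprod{\nabla f(x_t)}{x_t-x}+g(x_t)-g(x)\}=\delta_t$, where the maximum is attained at $x=v_t$. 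Combining the two displays yields $\delta_t\geq \varphi(x_t)-\varphi^*$, as claimed.

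I do not expect any real obstacle here: the only points requiring a word of care are that $x_t\in\dom g$ (so the gradient exists and the gradient inequality applies) and that we may restrict attention to an optimal $x^*\in\dom g$; both are immediate from the standing assumptions and the structure of the iteration. The argument uses only Assumption~\ref{assump:fg} and the identity \eqref{def-FW-gap}, both already available.
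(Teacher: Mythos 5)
Your proposal is correct and follows essentially the same argument as the paper: both apply the gradient inequality for convex $f$ at $x_t$ with the optimal point $x^*$, and both use the fact that $v_t$ optimizes the linearized subproblem (equivalently, the max-representation in \eqref{def-FW-gap}), merely chaining the two inequalities in the opposite order. The extra remarks about $x_t\in\dom g$ and $x^*\in\dom g$ are fine but not needed beyond what the paper already assumes.
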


\begin{proof}
Let $x^*$ be an arbitrary optimal solution of problem \eqref{main-prob}. Then $f(x^*)+g(x^*)= \varphi^*$. By this, the expression of $\delta_t$, and the convexity of $f$, we have that for all $t\geq 0$, 
\begin{align}
\delta_t &= \innprod{\nabla{f}(x_t)}{x_t} + g(x_t) - \innprod{\nabla{f}(x_t)}{v_t} - g(v_t) \nonumber\\
& \geq \innprod{\nabla{f}(x_t)}{x_t} + g(x_t) - \innprod{\nabla{f}(x_t)}{x^*} - g(x^*) \nonumber\\
&\geq f(x_t) + g(x_t) -f(x^*)-g(x^*) = \varphi(x_t)-\varphi^*. \label{eq:FW-gap-bound-convex}
\end{align}
\end{proof}


\begin{remark}
From the proof of Lemma~\ref{lem:FW-gap-bound-convex}, one can observe that the convexity of $f$ is only used  in \eqref{eq:FW-gap-bound-convex}. More generally, \eqref{eq:FW-gap-bound-convex} is also valid if $f$ satisfies the \emph{star-convexity} property \cite[]{NesPol06}: there exists some $x^* \in \Argmin_x \varphi(x)$ such that $f(\lambda x^*+(1-\lambda)x) \leq \lambda f(x^*) + (1-\lambda) f(x)$ for all $\lambda \in [0,1]$ and $x \in \dom g$. Thus, the conclusion of Lemma~\ref{lem:FW-gap-bound-convex} also holds if $f$ satisfies the star-convexity property. Moreover, all the results established in this paper for a convex $f$ also hold for a star-convex $f$.
\end{remark}

In what follows, we state some results regarding the rate of convergence of Algorithm~\ref{alg:FW-exact-ls} in Theorems~\ref{rate-exact-ls-1} and \ref{rate-exact-ls-2}, whose proofs are deferred to Section~\ref{sec:proof-1}. In particular, we first present the results under the assumption that $g$ has a bounded domain, namely, $D_g<+\infty$.


\begin{theorem}\label{rate-exact-ls-1}
Let the sequences $\{x_t\}$ and $\{\delta_t\}$ be generated in Algorithm~\ref{alg:FW-exact-ls}.  
Suppose that Assumption~\ref{assump:fg} holds, $D_g < +\infty$, and that $\delta_t> 0$ for all $t \ge 0$, where $D_g$ is defined in \eqref{Dg}.  Let $\delta_t^*= \min_{0\leq i\leq t}\delta_i$ and
\begin{align}
&
A = M_\nu^{\frac{1}{\nu}}D_g^{\frac{1+\nu}{\nu}}, \quad  t_0=\left\lceil \frac{1+\nu}{\nu}\left(\log \frac{(1+\nu)(\varphi(x_0)-\varphi^*)}{\nu A^{\nu}}\right)_+ \right\rceil, \nonumber \\
& \overline\gamma_t=[(\varphi(x_{t_0})-\varphi^*)^{-\frac{1}{\nu}}+(1+\nu)^{-1}A^{-1}(t-t_0)]^{-\nu}, \quad \forall t \ge t_0.\nonumber
\end{align}
Then the following statements hold.
\begin{itemize}
\item[(i)] $\{\varphi(x_t)\}$ is non-increasing and $\varphi_* =\lim_{t\to \infty}\varphi(x_t)$ exists. In addition, $\{\delta_t^*\}$ satisfies 
\begin{equation}\label{delta*-uppbnd-1}
\delta_t^*  \leq \max\left\{\frac{(1+\nu)(\varphi(x_0)-\varphi_*)}{\nu(t+1)},~
	\left(\frac{(1+\nu)A(\varphi(x_0)-\varphi_*)}{\nu(t+1)}\right)^{\frac{\nu}{1+\nu}}\right\}, \quad \forall t \ge 0.
\end{equation}
\item[(ii)] Assume additionally that $f$ is convex.
Then we have
\begin{align*}
&
\varphi(x_t)-\varphi^* \leq\overline\gamma_t, \quad \forall t\geq t_0,\\
&
\delta_t^* \leq e^{\frac{1}{e}} \overline\gamma_{\lfloor (t+t_0+1)/2 \rfloor},
\quad \forall t \geq t_0 + \frac{2(1+\nu)A}{\nu(\varphi(x_{t_0})-\varphi^*)^{\frac{1}{\nu}}}.
\end{align*}
\end{itemize}
\end{theorem}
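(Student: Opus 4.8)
The plan is to derive a one-step decrease inequality from the H\"older condition and then feed it into two elementary recursion lemmas, one for the general (possibly nonconvex) case and one for the convex case. First I would use the standard consequence of H\"older continuity of $\nabla f$: for any $x,y\in\dom g$,
\[
f(y) \le f(x) + \innprod{\nabla f(x)}{y-x} + \frac{M_\nu}{1+\nu}\norm{y-x}^{1+\nu}.
\]
Applying this with $x=x_t$, $y=x_{t+1}=(1-\tau_t)x_t+\tau_t v_t$ so that $y-x=\tau_t(v_t-x_t)$, and adding $g(x_{t+1})\le(1-\tau_t)g(x_t)+\tau_t g(v_t)$ from convexity of $g$, I get
\[
\varphi(x_{t+1}) \le \varphi(x_t) - \tau_t\delta_t + \frac{M_\nu\tau_t^{1+\nu}}{1+\nu}\norm{x_t-v_t}^{1+\nu}.
\]
Now plug in the algorithm's step size $\tau_t=\min\{1,(\delta_t/(M_\nu\norm{x_t-v_t}^{1+\nu}))^{1/\nu}\}$. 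In the "interior" case $\tau_t<1$ this gives $\varphi(x_t)-\varphi(x_{t+1}) \ge \frac{\nu}{1+\nu}\,\delta_t^{(1+\nu)/\nu}(M_\nu\norm{x_t-v_t}^{1+\nu})^{-1/\nu}$; in the boundary case $\tau_t=1$ one gets a decrease of at least $\frac{\nu}{1+\nu}\delta_t$. Using $\norm{x_t-v_t}\le D_g$ and $A=M_\nu^{1/\nu}D_g^{(1+\nu)/\nu}$, both cases combine into
\[
\varphi(x_t)-\varphi(x_{t+1}) \ge \frac{\nu}{1+\nu}\min\Bigl\{\delta_t,\ \frac{\delta_t^{(1+\nu)/\nu}}{A}\Bigr\}.
\]
This immediately gives that $\{\varphi(x_t)\}$ is non-increasing and bounded below by $\varphi^*$, hence $\varphi_*=\lim_t\varphi(x_t)$ exists.

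For part (i), I would telescope: summing the displayed inequality from $0$ to $t$ yields $\varphi(x_0)-\varphi_*\ge\frac{\nu(t+1)}{1+\nu}\min\{\delta_t^*,\,(\delta_t^*)^{(1+\nu)/\nu}/A\}$, where $\delta_t^*=\min_{0\le i\le t}\delta_i$ and I use monotonicity of $x\mapsto\min\{x,x^{(1+\nu)/\nu}/A\}$. Solving this for $\delta_t^*$ by splitting on which term attains the minimum gives exactly \eqref{delta*-uppbnd-1}.

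For part (ii), under convexity of $f$, Lemma~\ref{lem:FW-gap-bound-convex} gives $\delta_t\ge\varphi(x_t)-\varphi^*=:r_t$. Substituting $\delta_t\ge r_t$ into the one-step decrease and using $r_{t+1}\le r_t$, I obtain $r_t-r_{t+1}\ge\frac{\nu}{1+\nu}\min\{r_t,\,r_t^{(1+\nu)/\nu}/A\}$. I would first show that after $t_0$ steps the "$r_t$" branch can no longer be active, i.e.\ $r_{t_0}\le A^\nu\cdot(\text{something})$; indeed, as long as $r_t\ge$ the crossover threshold the recursion $r_{t+1}\le r_t(1-\frac{\nu}{1+\nu})$ decays geometrically, and $t_0$ is precisely the number of such steps needed (this is where the $\log$ and the $(1+\nu)/\nu$ factor in $t_0$ come from). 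For $t\ge t_0$ we then have the pure recursion $r_{t+1}\le r_t-\frac{\nu}{1+\nu}A^{-1}r_t^{(1+\nu)/\nu}$; the standard trick is to look at $r_t^{-1/\nu}$, show $r_{t+1}^{-1/\nu}\ge r_t^{-1/\nu}+\frac{1}{(1+\nu)A}$ (using convexity of $s\mapsto s^{-1/\nu}$ and the recursion), and unroll to get $r_t\le\bigl[r_{t_0}^{-1/\nu}+(1+\nu)^{-1}A^{-1}(t-t_0)\bigr]^{-\nu}=\overline\gamma_t$. Finally, for the bound on $\delta_t^*$: telescoping the decrease $r_i-r_{i+1}\ge\frac{\nu}{1+\nu}A^{-1}\delta_i^{(1+\nu)/\nu}$ over the window $\lfloor(t+t_0+1)/2\rfloor\le i\le t$ and using $r$ is non-increasing plus the bound $r_{\lfloor(t+t_0+1)/2\rfloor}\le\overline\gamma_{\lfloor(t+t_0+1)/2\rfloor}$ gives $(\delta_t^*)^{(1+\nu)/\nu}\le\frac{(1+\nu)A}{\nu}\cdot\frac{r_{\lfloor\cdot\rfloor}}{t-\lfloor\cdot\rfloor+1}$; the constant $e^{1/e}$ arises from bounding $\overline\gamma_{\lfloor\cdot\rfloor}^{1/\nu}/(t-\lfloor\cdot\rfloor)$ against $\overline\gamma_{\lfloor\cdot\rfloor}$ itself, and the lower bound on $t$ in the hypothesis guarantees the window is long enough for this estimate.

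The main obstacle I anticipate is the bookkeeping in part (ii): cleanly separating the initial geometric phase from the sublinear phase, pinning down that $t_0$ is exactly the right crossover count, and then carrying the $\overline\gamma$ bound through the "restart at the midpoint" argument for $\delta_t^*$ while tracking the $e^{1/e}$ constant. The one-step inequality and the $r_t^{-1/\nu}$ unrolling are routine; the delicate part is handling the $\min$ (two regimes) and the floor functions without losing constants.
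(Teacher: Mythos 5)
Your proposal is correct and follows essentially the same route as the paper: the one-step descent inequality from H\"older continuity of $\nabla f$ plus the step-size choice, telescoping for part (i), and for part (ii) the two-phase analysis (geometric decay until $\varphi(x_{t_0})-\varphi^*\le A^\nu$, then the $r_t^{-1/\nu}$ unrolling) followed by the midpoint-window argument --- the paper merely packages the two recursions into reusable lemmas (Lemmas~\ref{lem:rec} and \ref{lem:rec-conv}). The only step to tighten is the final telescoping: the per-step bound $r_i-r_{i+1}\ge \frac{\nu}{1+\nu}A^{-1}\delta_i^{(1+\nu)/\nu}$ can fail for an individual $i$ with $\delta_i>A^\nu$, so one should instead telescope $r_i-r_{i+1}\ge \frac{\nu}{1+\nu}\min\bigl\{\delta_t^*,(\delta_t^*)^{(1+\nu)/\nu}/A\bigr\}=\frac{\nu}{1+\nu}A^{-1}(\delta_t^*)^{(1+\nu)/\nu}$ (valid since $\delta_t^*\le A^\nu$ once $t\ge t_0$), which is exactly what you use implicitly when you land on the bound for $\delta_t^*$.
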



\begin{remark}
When $f$ is nonconvex, the limit $\varphi_*=\lim_{t\to \infty}\varphi(x_t)$  in Theorem \ref{rate-exact-ls-1} can be interpreted as the function value at some stationary point of $\varphi$. Indeed, for any convergent subsequence $\{x_t\}_{t \in T}$ with limit $x_*$ such that $\{\delta_t\}_{t \in T} \to 0$, it follows from \eqref{def-FW-gap} that
$$
\delta_t \geq \innprod{\nabla f(x_t)}{x_t-x} + g(x_t)-g(x),\quad \forall x \in \E.
$$
Taking the limit over $t \in T$, we can see that
$x_*\in\Argmin_x\{\innprod{\nabla f(x_*)}{x} + g(x)\}$. Thus, $x_*$ is a stationary point of $\varphi$ and moreover $\varphi_* = \varphi(x_*)$.
\end{remark}


We next present some results regarding the rate of convergence of Algorithm~\ref{alg:FW-exact-ls} under the assumption that problem \eqref{main-prob} has a uniformly convex structure, namely, Assumption~\ref{assump:g} holds.

\begin{theorem}\label{rate-exact-ls-2}
Let the sequences $\{x_t\}$ and $\{\delta_t\}$ be generated in Algorithm~\ref{alg:FW-exact-ls}.  
Suppose that Assumptions~\ref{assump:fg} and \ref{assump:g} hold and that $\delta_t> 0$ for all $t \ge 0$. Let $\delta_t^*= \min_{0\leq i\leq t}\delta_i$ for all $t \ge 0$ and
\begin{align}
&
A = \left(\frac{\rho}{\kappa}\right)^{\frac{1+\nu}{\rho\nu}}M_\nu^{\frac{1}{\nu}}, \quad  t_0= \left\lceil\frac{1+\nu}{\nu}\left(\log\frac{(1+\nu)(\varphi(x_0)-\varphi^*)}{\nu A^{\frac{\rho \nu}{\rho-1-\nu}}} \right)_+ \right\rceil,  \nonumber\\
& \overline\gamma_t=\left\{
\begin{array}{ll}
(\varphi(x_0)-\varphi^*) \exp\left(-\frac12\min\{1,\frac{\kappa}{2M_1}\}t\right) & \text{if} \ \nu=1 \text{ and } \rho=2, \\[2truemm]
 \left[(\varphi(x_{t_0})-\varphi^*)^{-\frac{\rho-1-\nu}{\rho\nu}}+\frac{\rho-1-\nu}{\rho(1+\nu)}A^{-1}(t-t_0) \right]^{-\frac{\rho\nu}{\rho-1-\nu}} & \text{otherwise}.
\end{array}
\right.   \nonumber
\end{align}
Then the following statements hold.
\begin{itemize}
\item[(i)] $\{\varphi(x_t)\}$ is non-increasing and $\varphi_*=\lim_{t\to \infty}\varphi(x_t)$ exists.  In addition, $\{\delta_t^*\}$ satisfies 
\begin{equation}\label{delta*-uppbnd-2}
\delta_t^*  \leq \max\left\{\frac{(1+\nu)(\varphi(x_0)-\varphi_*)}{\nu(t+1)},\left(\frac{(1+\nu)A(\varphi(x_0)-\varphi_*)}{\nu(t+1)}\right)^{\frac{\rho\nu}{(\rho-1)(1+\nu)}}
\right\},
\end{equation}
for all $t\geq 0$.
\item[(ii)] Assume additionally that $f$ is convex.
\begin{enumerate}[(a)]
\item
When $\nu=1$ and $\rho=2$, we have 
\begin{align*}
&\varphi(x_t)-\varphi^* \leq \overline{\gamma}_t,\quad \forall t \geq 0,\\
&
\delta_t^* \leq e^{\frac{1}{e}}\overline{\gamma}_{\left\lfloor (t+2)/2 \right\rfloor}, \quad \forall t \geq 4\max\left\{1,\frac{2M_1}{\kappa}\right\}.
\end{align*}
\item
When $\nu \neq 1$ or $\rho\neq 2$, we have
\begin{align*}
& \varphi(x_t)-\varphi^* \leq \overline\gamma_t,
\quad \forall t \geq t_0,\\
&\delta_t^* \leq e^{\frac{1}{e}}\overline\gamma_{\lfloor (t+t_0+1)/2\rfloor},
\quad \forall t \geq t_0+\frac{2(1+\nu)A}{\nu (\varphi(x_{t_0})-\varphi^*)^{\frac{\rho\nu}{\rho-1-\nu}} }.
\end{align*}
\end{enumerate}
\end{itemize}
\end{theorem}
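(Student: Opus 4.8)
The plan is to follow the same two-stage scheme as for Theorem~\ref{rate-exact-ls-1}: establish a one-step descent inequality, then feed it into a scalar recursion analysis, the only essential new ingredient being that the uniform convexity of the subproblem (Assumption~\ref{assump:g}) lets us control $\norm{x_t-v_t}$ by $\delta_t$ itself rather than by the diameter $D_g$. For the descent estimate, combine the weak-smoothness inequality $f(y)\le f(x)+\innprod{\nabla f(x)}{y-x}+\frac{M_\nu}{1+\nu}\norm{y-x}^{1+\nu}$, which follows from Assumption~\ref{assump:fg}(i), applied at $x=x_t$ and $y=x_{t+1}=(1-\tau_t)x_t+\tau_t v_t$, with the convexity bound $g(x_{t+1})\le(1-\tau_t)g(x_t)+\tau_t g(v_t)$, to get $\varphi(x_{t+1})\le\varphi(x_t)-\tau_t\delta_t+\frac{M_\nu}{1+\nu}\tau_t^{1+\nu}\norm{x_t-v_t}^{1+\nu}$. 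The step size $\tau_t$ in Algorithm~\ref{alg:FW-exact-ls} minimizes the right-hand side over $[0,1]$ (and the same bound holds for the exact line search variant), so inserting it and treating the two cases in its definition separately yields
\begin{equation*}
\varphi(x_t)-\varphi(x_{t+1}) \ge \frac{\nu}{1+\nu}\min\left\{\delta_t,\,\frac{\delta_t^{(1+\nu)/\nu}}{M_\nu^{1/\nu}\norm{x_t-v_t}^{(1+\nu)/\nu}}\right\}.
\end{equation*}

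Next comes the step that distinguishes this theorem from Theorem~\ref{rate-exact-ls-1}: since $v_t$ solves the subproblem \eqref{eq:subproblem} at $x_t$ and $x_t\in\dom g$, taking $v=x_t$ and $v^*=v_t$ in \eqref{eq:uniconv-subprob-ineq} gives $\delta_t\ge\frac{\kappa}{\rho}\norm{x_t-v_t}^\rho$, hence $\norm{x_t-v_t}\le(\rho\delta_t/\kappa)^{1/\rho}$. Substituting this and writing $\beta:=\frac{(1+\nu)(\rho-1)}{\rho\nu}$ and $A$ as in the statement, one obtains $\varphi(x_t)-\varphi(x_{t+1})\ge\frac{\nu}{1+\nu}\min\{\delta_t,\delta_t^\beta/A\}$, where $\beta\ge 1$ since $\rho\ge 2\ge 1+\nu$, with equality iff $\rho=2$, $\nu=1$. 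Part (i) now follows with no further use of Assumption~\ref{assump:g}: the right-hand side is nonnegative, so $\{\varphi(x_t)\}$ is non-increasing and, being bounded below by $\varphi^*$, converges to some $\varphi_*$; telescoping from $0$ to $t$, using that $\delta\mapsto\min\{\delta,\delta^\beta/A\}$ is non-decreasing on $[0,\infty)$ and $\delta_i\ge\delta_t^*$ for $i\le t$, gives $\frac{\nu(t+1)}{1+\nu}\min\{\delta_t^*,(\delta_t^*)^\beta/A\}\le\varphi(x_0)-\varphi_*$, and solving the two branches of the minimum yields \eqref{delta*-uppbnd-2} (note $1/\beta=\frac{\rho\nu}{(\rho-1)(1+\nu)}$).

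For part (ii), with $f$ convex Lemma~\ref{lem:FW-gap-bound-convex} gives $\delta_t\ge\varphi(x_t)-\varphi^*=:\lambda_t$, so monotonicity turns the descent inequality into $\lambda_t-\lambda_{t+1}\ge\frac{\nu}{1+\nu}\min\{\lambda_t,\lambda_t^\beta/A\}$. When $\beta=1$ (the case $\nu=1$, $\rho=2$, where $A=2M_1/\kappa$) this reads $\lambda_{t+1}\le(1-\tfrac12\min\{1,\kappa/(2M_1)\})\lambda_t$, giving the exponential profile $\overline\gamma_t$ at once via $1-x\le e^{-x}$. When $\beta>1$, while $\lambda_t\ge A^{1/(\beta-1)}$ the minimum equals $\lambda_t$ and the recursion contracts geometrically with ratio $\frac{1}{1+\nu}$, so $\lambda_{t_0}\le A^{1/(\beta-1)}$ after at most $t_0$ iterations; thereafter $\lambda_{t+1}\le\lambda_t-\frac{\nu}{(1+\nu)A}\lambda_t^\beta$, and applying convexity of $s\mapsto s^{-(\beta-1)}$ gives $\lambda_t^{-(\beta-1)}\ge\lambda_{t_0}^{-(\beta-1)}+\frac{\rho-1-\nu}{\rho(1+\nu)A}(t-t_0)$, i.e.\ $\lambda_t\le\overline\gamma_t$ for $t\ge t_0$. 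For the $\delta_t^*$ bounds I would sum the descent inequality from $s:=\lfloor(t+t_0+1)/2\rfloor$ (respectively $\lfloor(t+2)/2\rfloor$) to $t$, bound the telescoped left side by $\lambda_s\le\overline\gamma_s$ and each summand below by $\min\{\delta_t^*,(\delta_t^*)^\beta/A\}$, and solve for $\delta_t^*$; comparing the outcome with $\overline\gamma_s$ and using $\sup_{x>0}x^{1/x}=e^{1/e}$ (equivalently $(e^{1/e})^\beta\ge\beta$ for $\beta\ge 1$) produces the factor $e^{1/e}$ and the stated iteration thresholds.

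The conceptual content is confined to the first two paragraphs — the descent inequality and the bound $\norm{x_t-v_t}\le(\rho\delta_t/\kappa)^{1/\rho}$ coming from Assumption~\ref{assump:g} — and is short. The main obstacle is the bookkeeping in part (ii): arranging that the geometric burn-in lasts exactly $t_0$ iterations, that the decay is exactly $\overline\gamma_t$ uniformly over the $\beta=1$ and $\beta>1$ regimes, and that the Frank--Wolfe-gap thresholds and the $e^{1/e}$ constant come out as written. This is elementary but delicate, and is most naturally isolated in a self-contained lemma on scalar recursions of the form $a_{t+1}\le a_t-c\min\{a_t,a_t^\beta/A\}$ that can be shared with the proof of Theorem~\ref{rate-exact-ls-1}.
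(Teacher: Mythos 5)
Your proposal is correct and follows essentially the same route as the paper: the descent estimate is the paper's Lemma~\ref{lem:alg1-rec}, the bound $\delta_t\ge\frac{\kappa}{\rho}\norm{x_t-v_t}^\rho$ obtained by taking $v=x_t$ in \eqref{eq:uniconv-subprob-ineq} is exactly the paper's key step \eqref{delta_t-lwrbnd}, and the shared scalar-recursion lemma you advocate for the recursion $a_{t+1}\le a_t-c\min\{a_t,a_t^{\beta}/A\}$ is precisely how the paper organizes the analysis (its Lemmas~\ref{lem:rec} and \ref{lem:rec-conv}, with $\alpha=\beta-1=\frac{\rho-1-\nu}{\rho\nu}$ and $c=\frac{\nu}{1+\nu}$). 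The bookkeeping you flag in part~(ii) — the geometric burn-in of length $t_0$, the sublinear phase via convexity of $s\mapsto s^{-(\beta-1)}$, and the tail-summation from $\lfloor(t+t_0+1)/2\rfloor$ producing the $e^{1/e}$ factor — matches the paper's treatment in detail.
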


\begin{remark}
It can be observed from Theorem \ref{rate-exact-ls-2} that under Assumptions~\ref{assump:fg} and \ref{assump:g}, Algorithm~\ref{alg:FW-exact-ls} enjoys a linear rate of convergence when applied to problem \eqref{main-prob} with $f$ being convex, $\nu=1$ and $\rho=2$.
\end{remark}

\section{A Parameter-Free Conditional Gradient Method}  \label{sec:main}

As seen from above, Algorithm~\ref{alg:FW-exact-ls} is not only simple but also enjoys a nice rate of convergence. However, its step size $\tau_t$ may suffer from some practical issues. Indeed, to evaluate $\tau_t$, one needs to know the problem parameters $\nu$ and $M_\nu$ in advance. As observed from \eqref{assump:Holder}, these parameters depend on $f$, $g$, and also a particular norm on $\E$. Thus, it may not be easy to find them if $f$ is a sophisticated function. On another hand, the parameters $\nu$ and $M_\nu$ are not unique. The tighter value of them typically leads to a faster convergent algorithm. Yet, it may be challenging to find the tightest possible value for them. 
Motivated by these, we next propose a parameter-free conditional gradient method (Algorithm~\ref{alg:main}) in which the step size is chosen by using a constructive local quadratic upper approximation and an adaptive line search scheme, without using prior knowledge of $\nu$ and $M_\nu$.

%

\begin{algorithm}[t]
\caption{
A parameter-free conditional gradient method
}
\label{alg:main}
\textbf{Input:} $x_0 \in \dom{g}$ and $L_{-1} > 0$.
\begin{algorithmic}[1]
\For{$t=0,1,2,\ldots,$}
	\State $v_t \in \Argmin_{x \in \E}\{\innprod{\nabla{f}(x_t)}{x} + g(x)\}$.
	\State $\delta_t = \innprod{\nabla{f}(x_t)}{x_t}+g(x_t) - \innprod{\nabla{f}(x_t)}{v_t}-g(v_t)$.
	\Repeat{ \textbf{for} $i=0,1,2,\ldots,$} 
		\State $L_t^{(i)}=2^{i-1} L_{t-1}$.
		\State $\tau_t^{(i)}=\min\left\{1, \frac{\delta_t}{2 L_t^{(i)}\norm{x_t-v_t}^2}\right\}$.
		\State $x_{t+1}^{(i)}=(1-\tau_t^{(i)})x_t + \tau_t^{(i)} v_t$.
	\Until{
\begin{equation}\label{phi-reduct}
\varphi(x_{t+1}^{(i)}) \leq \varphi(x_t)-\frac{1}{2}\tau_t^{(i)}\delta_t + \frac{1}{2}L_t^{(i)}(\tau_t^{(i)})^2\norm{x_t-v_t}^2.
\end{equation}
	} 
	\State Set $(x_{t+1},L_t,\tau_t)\leftarrow (x_{t+1}^{(i)}, L_t^{(i)}, \tau_t^{(i)})$.
\EndFor
\end{algorithmic}
\end{algorithm}

We now provide some explanation for Algorithm~\ref{alg:main}. Observe that the step size $\tau_t$ in Algorithm~\ref{alg:FW-exact-ls} is the minimizer of $h_t(\tau)= \varphi(x_t) -\tau \delta_t + \tau^{1+\nu} \frac{M_\nu}{1+\nu}\norm{x_t-v_t}^{1+\nu}$ over $[0,1]$. 
Assuming that 
$\nu$ and $M_\nu$ are unknown, we can find a quadratic approximation to  
$h_t$, without explicitly involving $\nu$ and $M_\nu$, and then obtain a step size by minimizing it over $[0,1]$. Indeed, by the H\"older continuity of $\nabla{f}$, the following inequality holds \citep[e.g, see][Lemma~2]{Nes15}:
\begin{equation}\label{eq:Holder-approx}
f(y) \le  f(x) + \innprod{\nabla{f}(x)}{y-x} + \frac{L(\ep)}{2}\norm{x-y}^2 + \ep,\quad \forall x,y \in \dom g,~\forall \ep>0,\footnote{By convention, we set $0^0=1$. One can observe that when $\nu=1$, $L(\ep)$ becomes $M_\nu$ and thus \eqref{eq:Holder-approx} still holds.}
\end{equation}
where
\begin{equation}\label{eq:L-ep}
L(\ep)=\left(\frac{1-\nu}{1+\nu}\cdot \frac{1}{2\ep}\right)^{\frac{1-\nu}{1+\nu}} M_\nu^{\frac{2}{1+\nu}}, \quad  \forall \ep>0.
\end{equation}
By \eqref{eq:Holder-approx} and a suitable choice of $\ep$ (see the proof of next theorem for details), one can obtain a quadratic approximation to $h_t$ given by 
\[
 \tilde h_t (\tau) = \varphi(x_t)-\frac{1}{2}\tau \delta_t + \frac{1}{2}L_t\tau^2\norm{x_t-v_t}^2
\]
for some $L_t>0$, which is determined by an adaptive line search scheme without explicitly using $\nu$ and $M_\nu$. The step size $\tau_t$ in Algorithm~\ref{alg:main} is then obtained by minimizing $\tilde h_t$ in place of $h_t$ over $[0,1]$. Therefore, Algorithm~\ref{alg:main} does not explicitly use the parameters $\nu$ and $M_\nu$.


The following theorem shows that Algorithm~\ref{alg:main} is well-defined, whose proof is deferred to Section~\ref{sec:proof-2}. In particular, we will establish that in each outer iteration of  Algorithm~\ref{alg:main} the adaptive line search procedure must terminate after a finite number of trials. 
We will also establish some other properties for the adaptive line search procedure.

\begin{theorem}\label{lem:L-est}
Let the sequences $\{L_t\}$ and $\{\delta_t\}$ be generated in Algorithm~\ref{alg:main}. Suppose that Assumption~\ref{assump:fg} holds and that $\delta_t> 0$ for all $t \ge 0$.\footnote{If $\delta_t=0$ for some $t \ge 0$, $x_t$ is already a stationary point of problem \eqref{main-prob} and Algorithm~\ref{alg:main} shall be terminated.}  Let 
\begin{equation}\label{eq:def-tilde-Lt}
\widetilde{L}_t=\max\left\{L(\delta_t/2), L\left(\frac{\delta_t^2}{4\norm{x_t-v_t}^2}\right)^{\frac{1+\nu}{2\nu}}\right\}
\end{equation}
for all $t \ge 0$, where $L(\cdot)$ is defined in \eqref{eq:L-ep}. For any $\delta>0$, let 
\begin{equation}\label{eq:def-tilde-L}
\overline{L}(\delta) = \left\{
\begin{array}{ll}
\max\left\{
\left(\frac{1-\nu}{1+\nu}\frac{1}{\delta}\right)^{\frac{1-\nu}{1+\nu}}M_\nu^{\frac{2}{1+\nu}},~~
\left(\frac{2(1-\nu)}{1+\nu}\right)^{\frac{1-\nu}{2\nu}}
M_\nu^{\frac{1}{\nu}}
\left(\frac{D_g}{\delta}\right)^{\frac{1-\nu}{\nu}}\right\} & \text{if } \dom{g} \text{ is bounded}, \\
\max\left\{
\left(\frac{1-\nu}{1+\nu}\frac{1}{\delta}\right)^{\frac{1-\nu}{1+\nu}}M_\nu^{\frac{2}{1+\nu}},~~
\left(\frac{2(1-\nu)}{1+\nu}\right)^{\frac{1-\nu}{2\nu}}
M_\nu^{\frac{1}{\nu}}
\left(\frac{\rho}{\kappa \delta^{\rho-1}}\right)^{\frac{1-\nu}{\rho\nu}} \right\} & \text{if Assumption~\ref{assump:g} holds}. 
\end{array}
\right. 
\end{equation}
Then the following statements hold. 
\begin{enumerate}[(i)]
\item
The inequality \eqref{phi-reduct} holds whenever $L_t^{(i)} \geq \widetilde{L}_t$.
\item $L_t \leq 2\max_{0\leq i\leq t} \widetilde{L}_i$ holds for any 
$t \geq (\log_2 (L_{-1}/\widetilde{L}_0))_+$.
\item
Suppose further that $\min_{0\leq i \leq t}\delta_i \geq \ep$ for some $t \ge 0$ and $\ep>0$.
Then the total number of inner iterations performed by the adaptive line search procedure until the $t$-th  iteration of Algorithm~\ref{alg:main} is bounded by
$2t +2+ [\log_2 (2\overline{L}(\ep) /L_{-1})]_+$.
\end{enumerate}
\end{theorem}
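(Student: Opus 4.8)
I would prove the three items in order, with (i) doing the real work and (ii)--(iii) being a fairly standard backtracking-line-search accounting.

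\emph{Part (i).} The plan is to plug the trial point $x_{t+1}^{(i)}=(1-\tau)x_t+\tau v_t$, with $\tau:=\tau_t^{(i)}$, into the H\"older inequality \eqref{eq:Holder-approx} with $x=x_t$, $y=x_{t+1}^{(i)}$, and a free parameter $\ep>0$. Since $x_t,v_t\in\dom g$ and $\dom g$ is convex, $x_{t+1}^{(i)}\in\dom g$, so \eqref{eq:Holder-approx} applies; using $y-x_t=\tau(v_t-x_t)$ it gives $f(x_{t+1}^{(i)})\le f(x_t)+\tau\innprod{\nabla f(x_t)}{v_t-x_t}+\frac{L(\ep)}{2}\tau^2\norm{x_t-v_t}^2+\ep$. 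Adding the convexity estimate $g(x_{t+1}^{(i)})\le(1-\tau)g(x_t)+\tau g(v_t)$ and noting that $\tau\bigl(\innprod{\nabla f(x_t)}{v_t-x_t}+g(v_t)-g(x_t)\bigr)=-\tau\delta_t$, one obtains $\varphi(x_{t+1}^{(i)})\le\varphi(x_t)-\tau\delta_t+\frac{L(\ep)}{2}\tau^2\norm{x_t-v_t}^2+\ep$, valid for every $\ep>0$. The key choice is $\ep=\tfrac12\tau\delta_t$: this turns the bound into exactly \eqref{phi-reduct} with $L_t^{(i)}$ replaced by $L(\tfrac12\tau\delta_t)$, so everything reduces to showing $L(\tfrac12\tau\delta_t)\le L_t^{(i)}$ whenever $L_t^{(i)}\ge\widetilde L_t$. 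If $\tau=1$ this is just $L(\delta_t/2)\le L_t^{(i)}$, immediate from \eqref{eq:def-tilde-Lt}. If $\tau=\frac{\delta_t}{2L_t^{(i)}\norm{x_t-v_t}^2}<1$, then $\tfrac12\tau\delta_t=\frac{\delta_t^2}{4L_t^{(i)}\norm{x_t-v_t}^2}$, so the inequality to be checked is implicit in $L_t^{(i)}$; writing $L$ in the power-law form $L(\ep)=\bigl(\tfrac{1-\nu}{2(1+\nu)}\bigr)^{\frac{1-\nu}{1+\nu}}\ep^{-\frac{1-\nu}{1+\nu}}M_\nu^{\frac{2}{1+\nu}}$ from \eqref{eq:L-ep}, substituting, and solving for $L_t^{(i)}$ (the exponents collapse via $1-\tfrac{1-\nu}{1+\nu}=\tfrac{2\nu}{1+\nu}$) turns the requirement into precisely $L_t^{(i)}\ge L\bigl(\tfrac{\delta_t^2}{4\norm{x_t-v_t}^2}\bigr)^{\frac{1+\nu}{2\nu}}$, which is again part of $L_t^{(i)}\ge\widetilde L_t$. (The case $\nu=1$ degenerates to $L_t^{(i)}\ge M_1$.)

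\emph{Part (ii).} For (ii) I would first note that by part (i) the inner loop at outer step $t$ cannot run past the first index $i$ with $L_t^{(i)}=2^{i-1}L_{t-1}\ge\widetilde L_t$; inspecting that threshold yields $L_t\le\max\{\tfrac12 L_{t-1},\,2\widetilde L_t\}$ for every $t\ge0$. Unrolling this one-step recursion gives $L_t\le\max\{2^{-(t+1)}L_{-1},\,2\max_{0\le i\le t}\widetilde L_i\}$, and the first term is dominated by the second once $t\ge(\log_2(L_{-1}/\widetilde L_0))_+$, which is the claim.

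\emph{Part (iii).} For (iii) I would start from the observation that the number of inner iterations in outer step $s$ equals $i_s+1$, where $i_s$ is the stopping index and $L_s=2^{i_s-1}L_{s-1}$; hence $i_s+1=2+\log_2(L_s/L_{s-1})$ and summing over $s=0,\dots,t$ telescopes to a total of $2(t+1)+\log_2(L_t/L_{-1})$. So it suffices to prove $L_t\le 2\overline L(\ep)$, since then $\log_2(L_t/L_{-1})\le\log_2(2\overline L(\ep)/L_{-1})\le[\log_2(2\overline L(\ep)/L_{-1})]_+$. Using the recursion bound from part (ii): if $L_t\le 2^{-(t+1)}L_{-1}$ then $\log_2(L_t/L_{-1})<0$ and we are done; otherwise $L_t\le 2\max_{0\le i\le t}\widetilde L_i$ and it remains to show $\widetilde L_i\le\overline L(\ep)$ for each $i\le t$ under $\delta_i\ge\ep$. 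The first term $L(\delta_i/2)$ of $\widetilde L_i$ is nonincreasing in $\delta_i$, hence bounded by the first term of $\overline L(\ep)$. For the second term, the power-law form shows $L\bigl(\tfrac{\delta_i^2}{4\norm{x_i-v_i}^2}\bigr)^{\frac{1+\nu}{2\nu}}$ equals a universal constant times $M_\nu^{1/\nu}(\norm{x_i-v_i}/\delta_i)^{\frac{1-\nu}{\nu}}$, so one only needs $\norm{x_i-v_i}/\delta_i$ bounded: if $\dom g$ is bounded, $\norm{x_i-v_i}\le D_g$ and $\delta_i\ge\ep$ give $\norm{x_i-v_i}/\delta_i\le D_g/\ep$; if Assumption~\ref{assump:g} holds, taking $v=x_i$, $v^*=v_i$ in \eqref{eq:uniconv-subprob-ineq} gives $\delta_i\ge\tfrac{\kappa}{\rho}\norm{x_i-v_i}^\rho$, so $\norm{x_i-v_i}/\delta_i\le(\rho/(\kappa\delta_i^{\rho-1}))^{1/\rho}\le(\rho/(\kappa\ep^{\rho-1}))^{1/\rho}$. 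Plugging these into the two cases of \eqref{eq:def-tilde-L} gives $\widetilde L_i\le\overline L(\ep)$, and the proof is complete.

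\emph{Main obstacle.} The only genuinely delicate step will be the implicit inequality $L_t^{(i)}\ge L(\tfrac12\tau_t^{(i)}\delta_t)$ in the regime $\tau_t^{(i)}<1$ in part (i): one must choose $\ep=\tfrac12\tau_t^{(i)}\delta_t$ inside \eqref{eq:Holder-approx} and then carry out the H\"older-exponent bookkeeping needed to see that the resulting self-referential condition is equivalent to the clean threshold $L\bigl(\delta_t^2/(4\norm{x_t-v_t}^2)\bigr)^{(1+\nu)/(2\nu)}$ built into $\widetilde L_t$. Everything else is telescoping and monotonicity bookkeeping, plus the one extra ingredient in part (iii) that $\delta_i\ge\tfrac{\kappa}{\rho}\norm{x_i-v_i}^\rho$ under Assumption~\ref{assump:g}.
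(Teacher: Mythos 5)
Your proposal is correct and follows essentially the same route as the paper's proof: part (i) applies \eqref{eq:Holder-approx} with $\ep=\tau_t^{(i)}\delta_t/2$ and resolves the self-referential condition $L_t^{(i)}\ge L\bigl(\delta_t^2/(4L_t^{(i)}\norm{x_t-v_t}^2)\bigr)$ via the exponent identity $1-\frac{1-\nu}{1+\nu}=\frac{2\nu}{1+\nu}$ exactly as the paper does, and part (iii) uses the same telescoping count $2(t+1)+\log_2(L_t/L_{-1})$ together with the bounds $\norm{x_i-v_i}/\delta_i\le D_g/\ep$ or $(\rho/(\kappa\ep^{\rho-1}))^{1/\rho}$. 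The only (harmless) cosmetic difference is in part (ii), where you unroll the one-step recursion $L_t\le\max\{L_{t-1}/2,\,2\widetilde L_t\}$ directly rather than introducing the paper's index set $\cT=\{t:L_{t-1}\le 2\max_{0\le i\le t}\widetilde L_i\}$; both yield the same conclusion.
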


%

The next theorem establishes some results for the case where $g$ has a bounded domain, namely, $D_g<+\infty$, whose proof is deferred to Section~\ref{sec:proof-2}.

\begin{theorem}\label{th:main1}
Let the sequences $\{x_t\}$ and $\{\delta_t\}$ be generated in Algorithm~\ref{alg:main}.  
Suppose that Assumption~\ref{assump:fg} holds, $D_g < +\infty$, and that $\delta_t>0$ for all $t \ge 0$, where $D_g$ is defined in \eqref{Dg}.  Let $\delta_t^*= \min_{0\leq i\leq t}\delta_i$ for all $t \ge 0$ and
\begin{align}
&
A = (2M_\nu)^{\frac{1}{\nu}}D_g^{\frac{1+\nu}{\nu}}, \quad \tilde t_0=\left\lceil\left(\log_2 \frac{L_{-1}}{\widetilde{L}_0}\right)_+\right\rceil,
\quad
t_0=\left\lceil 4\left(\log\frac{4(\varphi(x_{\tilde{t}_0})-\varphi^*)}{A^\nu}\right)_+ \right\rceil,
\nonumber\\
&
\overline\gamma_t=\left[(\varphi(x_{t_0+\tilde{t}_0})-\varphi^*)^{-\frac{1}{\nu}}+(4\nu A)^{-1}(t - t_0) \right]^{-\nu}, \quad \forall t \ge t_0, \nonumber 
\end{align}
where $\widetilde{L}_0$ is defined in \eqref{eq:def-tilde-Lt}.
Then the following statements hold.
\begin{itemize}
\item[(i)] $\{\varphi(x_t)\}$ is non-increasing and $\varphi_* =\lim_{t\to \infty}\varphi(x_t)$ exists. In addition, $\{\delta_t^*\}$ satisfies 
\begin{equation}\label{delta*-uppbnd-3}
\delta^*_t
\leq
\max\left\{\frac{4(\varphi(x_{\tilde t_0})-\varphi_*)}{t+1-\tilde t_0},
	\left(\frac{4 A (\varphi(x_{\tilde t_0})-\varphi_*)}{t+1-\tilde t_0}\right)^{\frac{\nu}{1+\nu}}\right\}, \quad \forall t \ge \tilde t_0.
\end{equation}
\item[(ii)] Assume additionally that $f$ is convex.
Then we have
\begin{align*}
&
\varphi(x_t)-\varphi^* \leq\overline\gamma_{t-\tilde t_0}, \quad \forall t \ge\tilde t_0+ t_0,\\
&
\delta^*_t \leq e^{\frac{1}{e}} \overline\gamma_{\lfloor (t-\tilde t_0+t_0+1)/2 \rfloor},\quad \forall t\geq \tilde t_0+t_0+\frac{8A}{(\varphi(x_{\tilde{t}_0+t_0})-\varphi^*)^{\frac{1}{\nu}}}.
\end{align*}
\end{itemize}
\end{theorem}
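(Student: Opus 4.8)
The plan is to follow the proof of Theorem~\ref{rate-exact-ls-1} almost verbatim, once the line-search dynamics of Algorithm~\ref{alg:main} have been reduced to a recursion of the same shape; the real work is thus in two steps: extracting a one-step decrease of $\varphi$ from the stopping rule \eqref{phi-reduct}, and then, via Theorem~\ref{lem:L-est}, turning it into the analogue of the one-step inequality behind Theorem~\ref{rate-exact-ls-1}. For the first step, observe that $\tau_t=\min\{1,\,\delta_t/(2L_t\norm{x_t-v_t}^2)\}$ is exactly the minimizer over $[0,1]$ of $\tau\mapsto-\tfrac12\tau\delta_t+\tfrac12 L_t\tau^2\norm{x_t-v_t}^2$, so evaluating the accepted inequality \eqref{phi-reduct} in the two cases $\tau_t<1$ and $\tau_t=1$ gives
\[
\varphi(x_t)-\varphi(x_{t+1})\ \ge\ \tfrac14\min\Bigl\{\delta_t,\ \frac{\delta_t^2}{2L_t\norm{x_t-v_t}^2}\Bigr\}\ >\ 0 ;
\]
in particular $\{\varphi(x_t)\}$ is non-increasing and, being bounded below by $\varphi^*$, converges to some $\varphi_*$, which proves the first sentence of~(i).

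\emph{Reduction to a universal recursion.} Fix $t\ge\tilde t_0$. By Theorem~\ref{lem:L-est}(ii), $L_t\le 2\max_{0\le i\le t}\widetilde L_i$. Comparing the two maxima defining $\widetilde L_i$ in \eqref{eq:def-tilde-Lt} and $\overline L$ in \eqref{eq:def-tilde-L} term by term, using $\norm{x_i-v_i}\le D_g$, yields $\widetilde L_i\le\overline L(\delta_i)$; since $\overline L(\cdot)$ is non-increasing and $\delta_i\ge\delta_t^*$ for $i\le t$, it follows that $L_t\le 2\overline L(\delta_t^*)$. Substituting this and $\norm{x_t-v_t}\le D_g$ into the displayed bound, and using that $\overline L(\delta_t^*)D_g^2\le A(\delta_t^*)^{-(1-\nu)/\nu}$ once $\delta_t^*$ lies in the range where the second term of $\overline L$ dominates the first, one arrives at
\[
\varphi(x_t)-\varphi(x_{t+1})\ \ge\ \tfrac14\min\Bigl\{\delta_t,\ \frac{\delta_t^2\,(\delta_t^*)^{\frac{1-\nu}{\nu}}}{4A}\Bigr\},\qquad t\ge\tilde t_0 ;
\]
since $2+\tfrac{1-\nu}{\nu}=\tfrac{1+\nu}{\nu}$, this is the exact analogue of the one-step inequality underlying Theorem~\ref{rate-exact-ls-1}, with $A$ as in the statement and $x_{\tilde t_0}$ playing the role of $x_0$.

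\emph{The two statements.} For~(i): sum the last display over $i=\tilde t_0,\dots,t$; since $x\mapsto\min\{x,\,x^2(\delta_t^*)^{(1-\nu)/\nu}/(4A)\}$ is non-decreasing and $\delta_i,\delta_i^*\ge\delta_t^*$ for $i\le t$, each summand is at least $\tfrac14\min\{\delta_t^*,\,(\delta_t^*)^{(1+\nu)/\nu}/(4A)\}$, so telescoping followed by the two-case split of the $\min$ gives \eqref{delta*-uppbnd-3}; summability of the increments also yields $\delta_t^*\to0$. For~(ii): convexity of $f$ with Lemma~\ref{lem:FW-gap-bound-convex} gives $\delta_t\ge\varphi(x_t)-\varphi^*=:\phi_t$, and monotonicity of $\{\phi_t\}$ gives $\delta_t^*\ge\phi_t$, so the recursion becomes $\phi_{t+1}\le\phi_t-\tfrac14\min\{\phi_t,\,\phi_t^{(1+\nu)/\nu}/(4A)\}$ for $t\ge\tilde t_0$. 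While $\phi_t$ exceeds a threshold of order $A^\nu$ this forces $\phi_{t+1}\le\tfrac34\phi_t$, so $t_0$ iterations bring $\phi_t$ below it; afterwards $\phi_{t+1}\le\phi_t-c\,\phi_t^{(1+\nu)/\nu}$ with $c$ of order $1/A$, and the bounds on $\varphi(x_t)-\varphi^*$ and on $\delta_t^*$ (including the factor $e^{1/e}$) follow precisely as in the proof of Theorem~\ref{rate-exact-ls-1}(ii), now with the index shift $\tilde t_0$. Since $\phi_t\to0$ here, $\varphi_*=\varphi^*$.

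\emph{Main obstacle.} The crux is the reduction step: bounding $\max_{0\le i\le t}\widetilde L_i$ and, above all, verifying that the first term of $\widetilde L_i$ (and of $\overline L$), which behaves like $\delta^{-(1-\nu)/(1+\nu)}$, is dominated by the second, which behaves like $\delta^{-(1-\nu)/\nu}$, once the governing quantity ($\delta_t^*$, or $\phi_t$ in the convex case) has entered its ``small'' regime. This is exactly what the warm-up index $\tilde t_0$, the offset $t_0$, and the specific value of $A$ are arranged to guarantee, and it is what makes the adaptive recursion collapse to the clean single-power form borrowed from Theorem~\ref{rate-exact-ls-1}. Everything downstream --- the two-phase split and the telescoping/summation estimates --- is then routine and structurally identical to that earlier proof.
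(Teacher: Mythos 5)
Your overall route is the paper's own: extract a one-step decrease from the acceptance test \eqref{phi-reduct}, combine Theorem~\ref{lem:L-est}(ii) with $\widetilde{L}_i\le\overline{L}(\delta_i)$ and $\norm{x_i-v_i}\le D_g$ to obtain a recursion of the form \eqref{eq:lem-rec}, and then invoke Lemma~\ref{lem:rec} for (i) and Lemma~\ref{lem:rec-conv}(ii) for (ii). The downstream telescoping and two-phase arguments are indeed routine. However, there is a genuine gap exactly at the point you flag as the crux, and your proposed way around it does not work. The quantity $\overline{L}(\delta)$ in \eqref{eq:def-tilde-L} is a maximum of two terms, one scaling like $\delta^{-(1-\nu)/(1+\nu)}$ and one like $\delta^{-(1-\nu)/\nu}$, and your bound $\overline{L}(\delta_t^*)D_g^2\le A(\delta_t^*)^{-(1-\nu)/\nu}$ is valid only when the second term achieves the maximum. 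You assert that $\tilde t_0$, $t_0$, and the choice of $A$ are ``arranged to guarantee'' this domination, but they are not: $\tilde t_0$ only marks when $L_t\le 2\max_{0\le i\le t}\widetilde{L}_i$ begins to hold, and $t_0$ is the burn-in index of Lemma~\ref{lem:rec-conv}(ii); neither controls which term of $\overline{L}(\delta_t^*)$ is larger. When $\delta_t^*$ is large relative to $M_\nu D_g^{1+\nu}$ (for instance at $t=\tilde t_0$), the first term can dominate, and since statement (i) must hold for \emph{every} $t\ge\tilde t_0$ you cannot confine yourself to a small-$\delta_t^*$ regime.

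The paper closes this case with a short but essential observation that your proposal is missing. Setting $D_t=2^{-\frac{1+\nu}{2\nu}}\left(\frac{1-\nu}{1+\nu}\right)^{-\frac{1-\nu}{2\nu}}\left(\delta_t^*/(M_\nu D_g^{1+\nu})\right)^{1/\nu}$, one shows $C_t\ge D_t$ when the second term of $\overline{L}(\delta_t^*)$ dominates and $C_t\ge D_t^{2\nu/(1+\nu)}$ when the first does; since $2\nu/(1+\nu)\le 1$, in either case $\min\{1,C_t\}\ge\min\{1,D_t\}$, which is what yields \eqref{exact-ls-bound-3} for all $t\ge\tilde t_0$. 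Without this (or an equivalent treatment of the first-term-dominant case) your ``universal recursion'' is not established and the rest of the argument has nothing to stand on. A secondary, non-fatal remark: your one-step decrease $\frac14\min\{\delta_t,\ \delta_t^2/(2L_t\norm{x_t-v_t}^2)\}$ is the correct consequence of \eqref{phi-reduct}, but if you carry your constants through literally you obtain $16A$ rather than $4A$ inside the $\frac{\nu}{1+\nu}$-power in \eqref{delta*-uppbnd-3}; to recover the stated constants you must track how the factors of $2$ are absorbed into $A=(2M_\nu)^{1/\nu}D_g^{(1+\nu)/\nu}$, which your sketch does not do.
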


%

As an immediate consequence of Theorem \ref{th:main1}, we obtain the following complexity results for Algorithm~\ref{alg:main} for finding an approximate solution of problem \eqref{main-prob} with an $\varepsilon$-Frank-Wolfe gap, whose proofs are omitted.

\begin{corollary} \label{cor:itrcompl-1}
Under the same settings as in Theorem~\ref{th:main1}, Algorithm~\ref{alg:main} reaches the criterion 
$\delta_t \le \ep$ within  
\begin{equation*}
\tilde t_0 + \frac{4(\varphi(x_{\tilde t_0})-\varphi_*)}{\ep}\max\left\{1,\left(\frac{2M_\nu D_g^{1+\nu}}{\ep}\right) ^{\frac{1}{\nu}}\right\}
\end{equation*}
iterations. Furthermore, if $f$ is convex, it reaches the criterion 
$\delta_t \le \ep$ within 
\begin{equation*}
\tilde t_0 + t_0
+
8\left(\frac{2M_\nu D_g^{1+\nu}}{\varphi(x_{\tilde t_0 + t_0})-\varphi^*}\right)^{\frac{1}{\nu}} \max\left\{
	1,
	\nu  \left[
	\left( \frac{ e^{\frac{1}{e}} (\varphi(x_{\tilde t_0 + t_0})-\varphi^*) }{\ep} \right)^{\frac{1}{\nu}}
	-1
	\right]
\right\}
\end{equation*}
iterations.
\end{corollary}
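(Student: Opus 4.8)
The plan is to read off both iteration counts directly from Theorem~\ref{th:main1} by inverting its estimates on $\delta_t^*$. The one fact used repeatedly is that $\delta_t^* = \min_{0 \le i \le t}\delta_i \le \ep$ implies $\delta_i \le \ep$ for some $i \le t$; hence any $T$ for which Theorem~\ref{th:main1} guarantees $\delta_T^* \le \ep$ is a valid bound on the number of iterations needed to reach the criterion $\delta_t \le \ep$. I would also record at the outset that the constant $A = (2M_\nu)^{1/\nu}D_g^{(1+\nu)/\nu}$ in Theorem~\ref{th:main1} satisfies $A^\nu = 2M_\nu D_g^{1+\nu}$, so that $A/\ep^{1/\nu} = (2M_\nu D_g^{1+\nu}/\ep)^{1/\nu}$ and, for any $\Delta>0$, $A\Delta^{-1/\nu} = (2M_\nu D_g^{1+\nu}/\Delta)^{1/\nu}$; this is what turns the abstract $A$ into the explicit quantities appearing in the statement.

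For the general (possibly nonconvex) case, I would invoke \eqref{delta*-uppbnd-3}. For $t \ge \tilde t_0$, its right-hand side is at most $\ep$ once both entries of the maximum are, i.e., once $t + 1 - \tilde t_0 \ge 4(\varphi(x_{\tilde t_0})-\varphi_*)/\ep$ and $t + 1 - \tilde t_0 \ge 4A(\varphi(x_{\tilde t_0})-\varphi_*)/\ep^{(1+\nu)/\nu}$. Combining these into $t + 1 - \tilde t_0 \ge \frac{4(\varphi(x_{\tilde t_0})-\varphi_*)}{\ep}\max\{1,\, A/\ep^{1/\nu}\}$ and substituting $A/\ep^{1/\nu} = (2M_\nu D_g^{1+\nu}/\ep)^{1/\nu}$ gives the first claimed count (the displayed bound even drops the harmless additive $-1$).

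For the convex case, I would use the second estimate in Theorem~\ref{th:main1}(ii), $\delta_t^* \le e^{1/e}\overline\gamma_{\lfloor(t-\tilde t_0+t_0+1)/2\rfloor}$, valid once $t \ge \tilde t_0 + t_0 + 8A/(\varphi(x_{\tilde t_0+t_0})-\varphi^*)^{1/\nu}$. Writing $\Delta = \varphi(x_{\tilde t_0+t_0})-\varphi^*$ and $s = \lfloor(t-\tilde t_0+t_0+1)/2\rfloor$, the definition of $\overline\gamma$ makes $e^{1/e}\overline\gamma_s \le \ep$ equivalent to $s - t_0 \ge 4\nu A\,[(e^{1/e}/\ep)^{1/\nu} - \Delta^{-1/\nu}]$; using $s \ge (t - \tilde t_0 + t_0 - 1)/2$ I would convert this into a condition of the form $t \ge \tilde t_0 + t_0 + 8\nu A\,[(e^{1/e}/\ep)^{1/\nu} - \Delta^{-1/\nu}]$, up to an absorbable additive constant. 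Taking the maximum of this requirement with the validity threshold $8A/\Delta^{1/\nu}$, and rewriting $8\nu A\,[(e^{1/e}/\ep)^{1/\nu} - \Delta^{-1/\nu}] = 8A\Delta^{-1/\nu}\cdot\nu\,[(e^{1/e}\Delta/\ep)^{1/\nu}-1]$ together with $8A/\Delta^{1/\nu} = 8A\Delta^{-1/\nu}\cdot 1$, yields the factor $8(2M_\nu D_g^{1+\nu}/\Delta)^{1/\nu}\max\{1,\,\nu[(e^{1/e}\Delta/\ep)^{1/\nu}-1]\}$, which is the stated bound. Along the way I would check that $s \ge t_0$ so that $\overline\gamma_s$ is defined, which holds because $t \ge \tilde t_0 + t_0$.

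I expect the only real friction to lie in the convex case: inverting $\overline\gamma_t$, handling the floor through $\lfloor a \rfloor \ge a - 1$, and the final algebraic rearrangement into the $\max\{1, \nu[\cdots]\}$ form; in particular one must verify that the $\max$ with $1$ correctly absorbs the regime $e^{1/e}\Delta \le \ep$, where the bracket $(e^{1/e}\Delta/\ep)^{1/\nu}-1$ is nonpositive and the binding constraint is precisely the validity threshold $8A/\Delta^{1/\nu}$. None of this is conceptually deep, but the placement of the maxima and the additive integer constants must be tracked with care.
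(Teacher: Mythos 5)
Your proposal is correct and follows essentially the same route as the paper: the corollary is obtained by substituting the parameters identified in the proof of Theorem~\ref{th:main1} (namely $c=1/4$, $\alpha=1/\nu$, $A=(2M_\nu D_g^{1+\nu})^{1/\nu}$, and the index shift by $\tilde t_0$) into the ``in particular'' iteration-count clauses of Lemmas~\ref{lem:rec} and~\ref{lem:rec-conv}, which is exactly the inversion you carry out by hand. The only rough edge is your use of $\lfloor a\rfloor\ge a-1$, which leaves a stray additive $+1$ not present in the stated bound; using $\lfloor (m+1)/2\rfloor\ge m/2$ for integer $m$ (as in the proof of Lemma~\ref{lem:rec-conv}(ii)) removes it.
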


%

In what follows, we present some results regarding the rate of convergence of Algorithm~\ref{alg:main} for the case where  problem \eqref{main-prob} has a uniformly convex structure, namely,  Assumption~\ref{assump:g} holds, whose proof is deferred to Section~\ref{sec:proof-2}.

\begin{theorem}\label{th:main2}
Let the sequences $\{x_t\}$ and $\{\delta_t\}$ be generated by Algorithm~\ref{alg:main}.  
Suppose that Assumptions~\ref{assump:fg} and \ref{assump:g} hold and that $\delta_t>0$ for all $t \ge 0$. Let $\delta_t^*= \min_{0\leq i\leq t}\delta_i$ for all $t \ge 0$, and 
\begin{align}
&
A = \left(\frac{\rho}{\kappa}\right)^{\frac{1+\nu}{\rho\nu}}(2M_\nu)^{\frac{1}{\nu}},
\quad
\tilde t_0=\left\lceil\left(\log_2 \frac{L_{-1}}{\widetilde{L}_0}\right)_+\right\rceil,
\quad
t_0=\left\lceil 4\left(\log\frac{4(\varphi(x_{\tilde{t}_0})-\varphi^*)}{A^\frac{\rho\nu}{\rho-1-\nu}}\right)_+ \right\rceil, \nonumber\\
&\overline\gamma_t=\left\{
\begin{array}{ll}
(\varphi(x_{\tilde t_0})-\varphi^*) \exp\left(-\frac14\min\{1,\frac{\kappa}{4M_1}\}t\right) & \text{if} \ \nu=1 \text{ and } \rho=2, \\[2truemm]
\left[(\varphi(x_{\tilde t_0 + t_0})-\varphi^*)^{-\frac{\rho-1-\nu}{\rho\nu}}+\frac{\rho-1-\nu}{4\rho\nu}A^{-1}(t-t_0) \right]^{-\frac{\rho\nu}{\rho-1-\nu}} & \text{otherwise}, 
\end{array}
\right.   \nonumber
\end{align}
where $\widetilde{L}_0$ is defined in \eqref{eq:def-tilde-Lt}.
Then the following statements hold.
\begin{itemize}
\item[(i)] $\{\varphi(x_t)\}$ is non-increasing and $\varphi_* =\lim_{t\to \infty}\varphi(x_t)$ exists. In addition, $\{\delta_t^*\}$ satisfies 
\begin{equation}\label{delta*-uppbnd-4}
\delta^*_t  \leq \max\left\{\frac{4(\varphi(x_{\tilde t_0})-\varphi_*)}{t+1-\tilde t_0},
	\left(\frac{4A(\varphi(x_{\tilde t_0})-\varphi_*)}{t+1-\tilde t_0}\right)^{\frac{\rho\nu}{(\rho-1)(1+\nu)}}\right\},
\end{equation}
for all $t \ge \tilde t_0$.
\item[(ii)] Assume additionally that $f$ is convex.
\begin{enumerate}[(a)]
\item
When $\nu=1$ and $\rho=2$, we have 
\begin{align*}
&\varphi(x_t)-\varphi^* \leq \overline{\gamma}_{t-\tilde t_0},\quad \forall t \geq \tilde t_0,\\
&
\delta_t^* \leq e^{\frac{1}{e}}\overline{\gamma}_{\left\lfloor (t-\tilde t_0+2)/2 \right\rfloor}, \quad \forall t \geq \tilde t_0 + 4\max\left\{1,\frac{2M_1}{\kappa}\right\}.
\end{align*}
\item
When $\nu \neq 1$ or $\rho\neq 2$, we have
\begin{align*}
&
\varphi(x_t)-\varphi^*
\leq
\overline\gamma_{t-\tilde t_0},\quad \forall t \ge\tilde t_0+ t_0,\\
&
\delta_t^*
\leq
e^{\frac{1}{e}} \overline\gamma_{\lfloor (t-\tilde t_0+t_0+1)/2 \rfloor},
\quad
\forall t \geq \tilde t_0 + t_0 + \frac{8A}{(\varphi(x_{\tilde t_0 + t_0})-\varphi^*)^{\frac{\rho-1-\nu}{\rho\nu}}}.
\end{align*}
\end{enumerate}
\end{itemize}
\end{theorem}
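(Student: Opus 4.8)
The plan is to follow the template of the proof of Theorem~\ref{th:main1} (and, at the level of the recursions, that of Theorem~\ref{rate-exact-ls-2}), with the diameter $D_g$ there replaced by a bound on $\norm{x_t-v_t}$ coming from Assumption~\ref{assump:g}, and with the explicit parameters $\nu,M_\nu$ replaced by the adaptively generated $L_t$, which is controlled through Theorem~\ref{lem:L-est}. The first step is the fundamental one-step inequality: substituting $\tau_t=\min\{1,\delta_t/(2L_t\norm{x_t-v_t}^2)\}$ into the acceptance criterion \eqref{phi-reduct} and distinguishing $\tau_t<1$ from $\tau_t=1$ yields
\begin{equation*}
\varphi(x_t)-\varphi(x_{t+1})\ \geq\ \tfrac14\min\Bigl\{\delta_t,\ \tfrac{\delta_t^2}{2L_t\norm{x_t-v_t}^2}\Bigr\},\qquad\forall t\ge 0,
\end{equation*}
which in particular gives the monotonicity of $\{\varphi(x_t)\}$ and the existence of $\varphi_*$.

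Next I would use Assumption~\ref{assump:g}: applying \eqref{eq:uniconv-subprob-ineq} at $x=x_t$ with $v^*=v_t$ and $v=x_t$ gives $\delta_t\ge(\kappa/\rho)\norm{x_t-v_t}^\rho$, i.e.\ $\norm{x_t-v_t}^2\le(\rho\delta_t/\kappa)^{2/\rho}$, which turns the previous bound into $\varphi(x_t)-\varphi(x_{t+1})\ge\tfrac14\min\{\delta_t,\ \tfrac{(\kappa/\rho)^{2/\rho}}{2L_t}\delta_t^{2(\rho-1)/\rho}\}$. To bound $L_t$, I would combine the same inequality $\norm{x_i-v_i}^2\le(\rho\delta_i/\kappa)^{2/\rho}$ with the monotonicity of $L(\cdot)$ from \eqref{eq:L-ep} to show that the quantity $\widetilde L_i$ of \eqref{eq:def-tilde-Lt} satisfies $\widetilde L_i\le\overline L(\delta_i)$, where $\overline L$ is the Assumption~\ref{assump:g} branch of \eqref{eq:def-tilde-L}; together with Theorem~\ref{lem:L-est}(ii) and the monotonicity of $\overline L$ this gives $L_t\le 2\,\overline L(\delta_t^*)$ for all $t\ge\tilde t_0$.

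These inputs are then converted into a recursion for $r_t:=\varphi(x_t)-\varphi^*$. When $f$ is convex, Lemma~\ref{lem:FW-gap-bound-convex} gives $\delta_t\ge r_t$; since $\{r_t\}$ is non-increasing this also gives $\delta_t^*\ge r_t$ and hence $L_t\le 2\overline L(r_t)$, and substituting the explicit form of $\overline L$ (dominated, for large $t$, by its term proportional to $\delta^{-(\rho-1)(1-\nu)/(\rho\nu)}$) produces $r_{t+1}\le r_t-\tfrac14\min\{r_t,\ c\,r_t^{\theta}\}$ with $\theta=(\rho-1)(1+\nu)/(\rho\nu)\ge 1$ and an explicit $c=c(\nu,\rho,\kappa,M_\nu)$. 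From here the argument splits into two phases: as long as $r_t$ exceeds the threshold $c^{-1/(\theta-1)}$ (and in every step with $\tau_t=1$) the minimum equals $r_t$, so $r_{t+1}\le\tfrac34 r_t$ and a geometric count over $t_0$ steps brings $r_{\tilde t_0+t_0}$ below the threshold, which fixes the value of $t_0$; past that, the recursion $r_{t+1}\le r_t-\tfrac{c}{4}r_t^{\theta}$ with $\theta>1$ is integrated by the standard lemma $r_t\le[\,r_{t_1}^{-(\theta-1)}+\tfrac{c}{4}(\theta-1)(t-t_1)\,]^{-1/(\theta-1)}$, and with $\theta-1=(\rho-1-\nu)/(\rho\nu)$ this is exactly $\overline\gamma_t$. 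The case $\nu=1,\rho=2$ is handled separately: there $\theta=1$, $\overline L$ reduces to the constant $M_1$ (up to the line-search factor), the recursion is geometric throughout and no second phase is needed, producing the exponential $\overline\gamma_t$ and the constant $4\max\{1,2M_1/\kappa\}$.

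It remains to obtain the Frank--Wolfe gap bounds. For part~(i), with no convexity, I would telescope the one-step decrease over $i=\tilde t_0,\dots,t$: the sum is at most $4(\varphi(x_{\tilde t_0})-\varphi_*)$, while every term is at least $\tfrac14\min\{\delta_t^*,\ c'(\delta_t^*)^{\theta}\}$ (using $\delta_i\ge\delta_t^*$ and the monotonicity of $\overline L$), and solving for $\delta_t^*$ gives \eqref{delta*-uppbnd-4}, whose second branch carries the exponent $1/\theta=\rho\nu/((\rho-1)(1+\nu))$. For part~(ii) I would instead telescope over a dyadic window, observe that the window sum equals $r_{\mathrm{start}}-r_{\mathrm{end}}\le r_{\mathrm{start}}$, which is already bounded by $\overline\gamma$, pick the best index in the window, and translate the resulting bound on $r_i-r_{i+1}$ back into a bound on $\delta_i$ (legitimate once $t$ exceeds the stated threshold, so that $\tau_i<1$ there); comparing $\overline\gamma$ at the two window endpoints produces the harmless factor $e^{1/e}$. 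I expect the main obstacle to be the $L_t$ bound of the second step — making it sharp enough to yield precisely the exponent $\theta=(\rho-1)(1+\nu)/(\rho\nu)$ — together with the careful tracking of the lead constants in $t_0$ and in $\overline\gamma_t$ across the two phases, and the separate treatment of the degenerate but practically important case $\nu=1$, $\rho=2$.
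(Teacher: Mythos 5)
Your proposal is correct and follows essentially the same route as the paper: the one-step decrease extracted from the acceptance criterion \eqref{phi-reduct}, the bound $\delta_t\ge(\kappa/\rho)\norm{x_t-v_t}^\rho$ from Assumption~\ref{assump:g}, the control $L_t\le 2\overline{L}(\delta_t^*)$ via $\widetilde L_i\le\overline L(\delta_i)$ and Theorem~\ref{lem:L-est}(ii) (this is the paper's Lemma~\ref{lem:alg2-rec}), and the resulting recursion with exponent $(\rho-1)(1+\nu)/(\rho\nu)$ fed into the two-phase and dyadic-window arguments, which the paper packages as Lemmas~\ref{lem:rec} and \ref{lem:rec-conv}. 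The only differences are presentational (you integrate the recursion lemmas inline and summarize the two-branch case analysis of $\overline L$ by its dominant term), so no gap.
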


As an immediate consequence of Theorem \ref{th:main2}, we obtain the following complexity results for Algorithm~\ref{alg:main} for finding an approximate solution of problem \eqref{main-prob} with an $\varepsilon$-Frank-Wolfe gap, whose proofs are omitted.

%

\begin{corollary} \label{cor:itrcompl-2}
Under the same settings as in Theorem~\ref{th:main2}, Algorithm~\ref{alg:main} reaches the criterion 
$\delta_t \le \ep$ within  
\begin{equation*}
\tilde t_0 + \frac{4(\varphi(x_{\tilde t_0})-\varphi_*)}{\ep}
\max\left\{1,
	\frac{\rho^{\frac{1+\nu}{\rho\nu}}(2M_\nu)^{\frac{1}{\nu}}}{\kappa^{\frac{1+\nu}{\rho\nu}}\ep^{\frac{\rho-1-\nu}{\rho\nu}}}
\right\}
\end{equation*}
iterations. Furthermore, if $f$ is convex, $\nu=1$ and $\rho=2$, Algorithm~\ref{alg:main} reaches the criterion 
$\delta_t \le \ep$ within 
\begin{equation}\label{itrcompl-2-lin}
\tilde t_0 + 8 \max\left\{1,\frac{4M_1}{\kappa}\right\} \max \left\{1, \log \frac{\varphi(x_{\tilde t_0})-\varphi^*}{\ep}\right\}
\end{equation}
iterations. In addition, if $f$ is convex, and $\nu \neq 1$ or $\rho \neq 2$, Algorithm~\ref{alg:main} reaches the criterion 
$\delta_t \le \ep$ within 
\begin{equation}\label{itrcompl-2-2}
\tilde t_0 + t_0 +
\frac{8\rho^{\frac{1+\nu}{\rho\nu}}(2M_\nu)^{\frac{1}{\nu}}}{\kappa^{\frac{1+\nu}{\rho\nu}}(\varphi(x_{\tilde t_0 + t_0})-\varphi^*)^{\frac{\rho-1-\nu}{\rho\nu}}}
\max \left\{
	1,
	\frac{\rho\nu}{\rho-1-\nu} \left[
	\left(
		\frac{e^{\frac{1}{e}} (\varphi(x_{\tilde t_0 + t_0})-\varphi^*) }{\ep}
	\right)^{\frac{\rho-1-\nu}{\rho\nu}}
	-1
	\right]
\right\}
\end{equation}
iterations.
\end{corollary}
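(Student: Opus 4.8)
The plan is to derive Corollary~\ref{cor:itrcompl-2} directly from Theorem~\ref{th:main2} by plugging in the tolerance $\ep$ and solving the resulting inequalities for $t$, exactly as the analogous Corollary~\ref{cor:itrcompl-1} follows from Theorem~\ref{th:main1}. There are three regimes to handle, matching the three displayed bounds: the non-convex (or convex-but-crude) bound using only part (i); the linearly convergent convex case $\nu=1,\rho=2$ using part (ii)(a); and the sublinear convex case $\nu\neq 1$ or $\rho\neq 2$ using part (ii)(b).

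First, for the first displayed bound I would start from \eqref{delta*-uppbnd-4} in Theorem~\ref{th:main2}(i). Since $\delta_t^*\le\ep$ is guaranteed once the right-hand side of \eqref{delta*-uppbnd-4} drops below $\ep$, I would impose that each of the two terms in the $\max$ is at most $\ep$: the first gives $t+1-\tilde t_0\ge 4(\varphi(x_{\tilde t_0})-\varphi_*)/\ep$, and the second, after raising to the power $(\rho-1)(1+\nu)/(\rho\nu)$, gives $t+1-\tilde t_0\ge 4A(\varphi(x_{\tilde t_0})-\varphi_*)\,\ep^{-(\rho-1)(1+\nu)/(\rho\nu)}$. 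Recalling $A=(\rho/\kappa)^{(1+\nu)/(\rho\nu)}(2M_\nu)^{1/\nu}$ and combining via a single $\max$ over the two lower bounds on $t$, after factoring out $4(\varphi(x_{\tilde t_0})-\varphi_*)/\ep$ and simplifying the exponents one recovers the stated expression $\tilde t_0 + \frac{4(\varphi(x_{\tilde t_0})-\varphi_*)}{\ep}\max\{1,\rho^{(1+\nu)/(\rho\nu)}(2M_\nu)^{1/\nu}/(\kappa^{(1+\nu)/(\rho\nu)}\ep^{(\rho-1-\nu)/(\rho\nu)})\}$; the key algebraic check is that $1-(\rho-1)(1+\nu)/(\rho\nu)=(\rho-1-\nu)/(\rho\nu) \cdot(-1)$ up to sign bookkeeping, i.e.\ that the residual power of $\ep$ is exactly $-(\rho-1-\nu)/(\rho\nu)$.

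Second, for \eqref{itrcompl-2-lin}: in the regime $\nu=1,\rho=2$, Theorem~\ref{th:main2}(ii)(a) gives $\delta_t^*\le e^{1/e}\overline\gamma_{\lfloor(t-\tilde t_0+2)/2\rfloor}$ for $t\ge\tilde t_0+4\max\{1,2M_1/\kappa\}$, with $\overline\gamma_s=(\varphi(x_{\tilde t_0})-\varphi^*)\exp(-\tfrac14\min\{1,\kappa/(4M_1)\}s)$. Setting $e^{1/e}\overline\gamma_{\lfloor(t-\tilde t_0+2)/2\rfloor}\le\ep$, taking logarithms, and solving the resulting linear inequality for $t$ — using $\lfloor(t-\tilde t_0+2)/2\rfloor\ge(t-\tilde t_0)/2$ and $1/\min\{1,\kappa/(4M_1)\}=\max\{1,4M_1/\kappa\}$ — yields a bound of the form $\tilde t_0 + 8\max\{1,4M_1/\kappa\}\log(e^{1/e}(\varphi(x_{\tilde t_0})-\varphi^*)/\ep)$; absorbing the $e^{1/e}$ constant into the outer $\max\{1,\cdot\}$ and noting this dominates the $4\max\{1,2M_1/\kappa\}$ warm-up threshold gives \eqref{itrcompl-2-lin}.

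Third, for \eqref{itrcompl-2-2}: in the regime $\nu\neq1$ or $\rho\neq2$, Theorem~\ref{th:main2}(ii)(b) gives $\delta_t^*\le e^{1/e}\overline\gamma_{\lfloor(t-\tilde t_0+t_0+1)/2\rfloor}$ for $t\ge\tilde t_0+t_0+8A/(\varphi(x_{\tilde t_0+t_0})-\varphi^*)^{(\rho-1-\nu)/(\rho\nu)}$, where $\overline\gamma_s=[(\varphi(x_{\tilde t_0+t_0})-\varphi^*)^{-(\rho-1-\nu)/(\rho\nu)}+\tfrac{\rho-1-\nu}{4\rho\nu}A^{-1}(s-t_0)]^{-\rho\nu/(\rho-1-\nu)}$. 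I would set $e^{1/e}\overline\gamma_{\lfloor(t-\tilde t_0+t_0+1)/2\rfloor}\le\ep$, invert the power $-\rho\nu/(\rho-1-\nu)$ to get a lower bound on the bracket, isolate the linear-in-$s$ term, use $\lfloor(t-\tilde t_0+t_0+1)/2\rfloor\ge(t-\tilde t_0+t_0)/2$ so $s-t_0\ge(t-\tilde t_0)/2$, and solve for $t$. After multiplying through by $4\rho\nu A/(\rho-1-\nu)$ and substituting $A=(\rho/\kappa)^{(1+\nu)/(\rho\nu)}(2M_\nu)^{1/\nu}$, the $\big[(e^{1/e}(\varphi(x_{\tilde t_0+t_0})-\varphi^*)/\ep)^{(\rho-1-\nu)/(\rho\nu)}-1\big]$ factor emerges from the difference of the two bracket terms, and the whole thing collapses to \eqref{itrcompl-2-2} once one verifies the prefactor $8\rho^{(1+\nu)/(\rho\nu)}(2M_\nu)^{1/\nu}\kappa^{-(1+\nu)/(\rho\nu)}(\varphi(x_{\tilde t_0+t_0})-\varphi^*)^{-(\rho-1-\nu)/(\rho\nu)}$ matches $\tfrac{4\rho\nu}{\rho-1-\nu}A\cdot\tfrac{\rho-1-\nu}{\rho\nu}\cdot\tfrac{2}{1}\cdot(\cdots)$ and that the warm-up threshold is dominated by this expression.

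The routine part is the exponent bookkeeping; the one place to be careful is the floor/ceiling and warm-up-threshold reconciliation — making sure the $\max\{1,\cdot\}$ wrappers genuinely absorb both the constant $e^{1/e}$ losses and the additive warm-up terms $t_0,\tilde t_0$ so that the clean closed forms in the corollary are valid for \emph{all} $\ep>0$ rather than only asymptotically small $\ep$. That is the main (though still modest) obstacle; everything else is substitution into Theorem~\ref{th:main2}.
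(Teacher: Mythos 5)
Your overall route is the intended one: the paper omits the proof precisely because the corollary is meant to follow by applying the ``in particular''/``consequently'' clauses of Lemmas~\ref{lem:rec} and \ref{lem:rec-conv} with the parameter identification $\beta_t=\delta^*_{t+\tilde t_0}$, $\gamma_t=\varphi(x_{t+\tilde t_0})-\varphi^*$, $c=1/4$, $\alpha=(\rho-1-\nu)/(\rho\nu)$, $A=(\rho/\kappa)^{\frac{1+\nu}{\rho\nu}}(2M_\nu)^{\frac{1}{\nu}}$ already set up in the proof of Theorem~\ref{th:main2}; solving the theorem's displayed bounds for $t$, as you do, is the same computation. Your first and third derivations are sound (the exponent identity $1+\frac{\rho-1-\nu}{\rho\nu}=\frac{(\rho-1)(1+\nu)}{\rho\nu}$ is exactly the check needed), though in the third you write $s-t_0\ge (t-\tilde t_0)/2$ where the correct consequence of $s=\lfloor (t-\tilde t_0+t_0+1)/2\rfloor\ge (t-\tilde t_0+t_0)/2$ is $s-t_0\ge (t-\tilde t_0-t_0)/2$; this is what produces the additive $\tilde t_0+t_0$ in \eqref{itrcompl-2-2}, so the slip is harmless for the stated target but should be corrected.

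The one genuine (if small) gap is in your derivation of \eqref{itrcompl-2-lin}. You claim the constant $e^{1/e}$ can be ``absorbed into the outer $\max\{1,\cdot\}$'', but it cannot: solving $e^{1/e}\,\overline\gamma_{\lfloor (t-\tilde t_0+2)/2\rfloor}\le\ep$ yields $t\ge \tilde t_0+8\max\{1,4M_1/\kappa\}\bigl(\tfrac1e+\log\tfrac{\varphi(x_{\tilde t_0})-\varphi^*}{\ep}\bigr)$, and $\tfrac1e+L>\max\{1,L\}$ whenever $L>1-\tfrac1e$, so this does not imply the stated bound with its exact constants. To obtain \eqref{itrcompl-2-lin} you need the fact, established inside the proof of Lemma~\ref{lem:rec-conv}(i) via \eqref{beta-bnd}, that in the $\alpha=0$ case one actually has $\beta_t^*\le\overline\gamma_{\lfloor (t+2)/2\rfloor}$ \emph{without} the $e^{1/e}$ factor; the displayed inequality in Theorem~\ref{th:main2}(ii)(a) is a deliberately weakened version and is not quite strong enough on its own. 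Equivalently, cite the ``consequently'' clause of Lemma~\ref{lem:rec-conv}(i) directly rather than inverting the theorem's bound.
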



\begin{remark}
In view of the identity $\lim_{\alpha \to 0}\frac{1}{\alpha}(x^\alpha-1)=\log x$, one can observe that the limit of \eqref{itrcompl-2-2} as $\nu \to 1$ and $\rho\to 2$ is 
\[
O\left(\frac{M_1}{\kappa} \max \left\{1, \log \frac{\varphi(x_{\tilde t_0})-\varphi^*}{\ep}\right\}\right),
\]
which is consistent with the bound \eqref{itrcompl-2-lin} for the case with $\nu=1$ and $\rho=2$.
\end{remark}

\subsection{Iteration Complexity}\label{sec:iter-compl}

As mentioned earlier, the Frank-Wolfe gap $\delta_t$ defined in \eqref{def-FW-gap} is a computable quantity and can be used to measure whether the associated iterate $x_t$ is an approximate stationary point of problem \eqref{main-prob}. Therefore, $\delta_t \leq \ep$ 
can be used as a practical termination criterion for Algorithms~\ref{alg:FW-exact-ls} and \ref{alg:main} for a prescribed tolerance $\ep>0$. 
Besides, one can observe from Theorems \ref{rate-exact-ls-1}, \ref{rate-exact-ls-2}, \ref{th:main1} and \ref{th:main2} that Algorithms~\ref{alg:FW-exact-ls} and \ref{alg:main} enjoy the same rate of convergence and thus the same iteration complexity with respect to the termination criterion $\delta_t \leq \ep$. 
Consequently, it suffices to discuss the iteration complexity of Algorithm \ref{alg:main} with such a termination criterion.

One can observe from Corollaries~\ref{cor:itrcompl-1} and \ref{cor:itrcompl-2} that the iteration complexity of Algorithm~\ref{alg:main} for reaching the termination criterion $\delta_t \le \ep$ is:
\begin{enumerate}[(i)]
\item $O(\ep^{-1-1/\nu})$  if $f$ is nonconvex and $\dom{g}$ is bounded;
\item $O(\ep^{-1-(\rho-1-\nu)/(\rho\nu)})$  if $f$ is nonconvex and problem \eqref{main-prob} has a uniformly convex structure;
\item $O(\ep^{-1/\nu})$ if $f$ is convex and $\dom{g}$ is bounded; 
\item $O(\log(1/\ep))$  if $f$ is convex and  problem \eqref{main-prob} has a uniformly convex structure with $\nu=1$ and $\rho=2$;
\item $O(\ep^{-(\rho-1-\nu)/(\rho\nu)})$  if $f$ is convex and problem \eqref{main-prob} has a uniformly convex structure  with $\nu \neq 1$ or $\rho \neq 2$.
\end{enumerate}

Since $\rho\geq 2$ and $\nu\in(0,1]$, one has $\ep^{-(\rho-1-\nu)/(\rho\nu)}< \ep^{-1/\nu}$ and $\ep^{-1-(\rho-1-\nu)/(\rho\nu)}< \ep^{-1-1/\nu}$ when $\nu\neq 1$ or $\rho\neq 2$. In view of this and the above complexity results, we can observe that Algorithm~\ref{alg:main} enjoys a lower iteration complexity bound under Assumption \ref{assump:g} than the one under the assumption that $\dom{g}$ is bounded. Besides, the iteration complexity bound in (iii) matches the ones obtained in \cite[]{Nes15,ZF20}. It should, however, be noted that the conditional gradient methods in \cite[]{Nes15,ZF20} use the step size $\tau_t=2/(t+2)$ and the same one as given in Algorithm \ref{alg:FW-exact-ls}, respectively. 
These step sizes are not locally adaptive because they use none of local or global problem information, and could be conservative in practice.
Moreover, the latter one requires prior knowledge of the parameters $\nu$ and $M_\nu$. In contrast with them, the step size in Algorithm~\ref{alg:main} is locally adaptive and free of problem parameters. In addition, the iteration complexity bounds in (ii) and (iv) match the ones obtained in \cite[]{Ghadimi19} for the case with $\rho=2$.
The iteration complexity bound in (v) improves by the factor $\log(1/\ep)$ the one established in \cite[]{Ghadimi19} for the case with $\nu<1$ and $\rho=2$. 
For a smooth convex $f$ with $\nu=1$ and $g$ being the indicator function of a uniformly convex set, similar iteration complexity bounds as in (iv) and (v) with $\nu=1$ were established  in \cite[]{Ker21a} for a parameter-dependent conditional gradient method for reaching 
the criterion $\varphi(x_t)-\varphi^* \le \varepsilon$.

In addition, as observed from Theorem~\ref{lem:L-est} (iii), the total number of inner iterations of Algorithm~\ref{alg:main} for reaching the termination criterion $\delta_t \le \ep$ is at most $2t + [\log_2 (2\overline{L}(\ep) /L_{-1})]_+$. Also, notice from \eqref{eq:def-tilde-L} that $\log\overline{L}(\ep)=O(\log(1/\ep))$. In view of these, one can see that the total number of inner iterations of Algorithm~\ref{alg:main} enjoys the same complexity bounds as given in (i)-(v) for reaching the termination criterion $\delta_t \le \ep$.


\section{Numerical Experiments} \label{sec:num}

In this section we conduct some numerical experiments to compare the performance of the conditional gradient methods studied in this paper with the ones with some other step size rules. For the comparison, we construct the problems whose H\"older continuity exponent $\nu$ and uniform convexity exponent $\rho$ are known in advance. More specifically, we generate the test instances from the problem classes discussed in Examples~\ref{ex:uniconv-set} and \ref{ex:uniconv-func}, respectively. Our experiments are conducted in Matlab on an Apple desktop with the 3.0GHz Intel Xeon E5-1680v2 processor and 64GB of RAM.


\subsection{$\ell_p$-Norm Minimization over $\ell_q$ Ball} \label{lq-ball}
In our first experiment, we consider the following problem:
\begin{equation}\label{eq:ex-lp}
\begin{array}{rl}
\min & \frac{1}{p}\norm{Ax-b}_p^p\\
\text{s.t.} & x \in \cB_q := \{z \in \RR^n: \norm{z}_q \leq 1\},
\end{array}
\end{equation}
where $1<p \leq 2$, $q >1$, $A \in \RR^{m\times n}$, and $b \in \RR^m$. Note that $\cB_q$ is a uniformly convex set with exponent $\rho=\max(2,q)$ \cite[e.g., see][]{Ker21a,Ker21}. In addition, as shown in Lemma~\ref{lem:lp-Holder} in Section~\ref{sec:aux-lem}, the gradient of the objective function of 
\eqref{eq:ex-lp} is H\"older continuous with respect to $\|\cdot\|_2$ with exponent $\nu=p-1$ and modulus $M_{\nu}=2^{2-p} m^{\frac{(p-1)(2-p)}{2p}} \norm{A}_2^p$. Thus, problem \eqref{eq:ex-lp} belongs to the class of the problems minimizing a weakly smooth convex function over a uniformly convex set discussed in Example~\ref{ex:uniconv-set}. Note that when $A=I$, it reduces to the problem of the 
$\ell_p$-norm projection of a vector onto the $\ell_q$ unit ball.

We next apply the following three conditional gradient methods 
to solve problem \eqref{eq:ex-lp}, and compare their performance.
\begin{itemize}
\item Algorithm~\ref{alg:FW-exact-ls} with $\|\cdot\|=\|\cdot\|_2$, $\nu=p-1$, and $M_{\nu}=2^{2-p} m^{\frac{(p-1)(2-p)}{2p}} \norm{A}_2^p$.
\item Algorithm~\ref{alg:main} with $\|\cdot\|=\|\cdot\|_2$.
\item The conditional gradient method with the well-known diminishing step size $\tau_t = 2/(t+2)$ \cite[]{Jaggi11,FG16}, which is similar to Algorithm~\ref{alg:FW-exact-ls} except the choice of $\tau_t$.
\end{itemize}
As discussed in Section~\ref{sec:iter-compl}, for finding an approximate solution of \eqref{eq:ex-lp} with an $\ep$-Frank-Wolfe gap, the conditional gradient method with step size $\tau_t = 2/(t+2)$ enjoys an iteration complexity of $O(\ep^{-1/\nu})$, while Algorithms~\ref{alg:FW-exact-ls} and \ref{alg:main} enjoy the following iteration complexity:  
$$
\left\{
\begin{array}{ll}
O(\log(1/\ep))  & \text{if } p=2 \text{ and } q\leq 2,\\
O(\ep^{-(\rho-1-\nu)/(\rho\nu)})) & \text{otherwise},
\end{array}
\right.
\quad
\text{with } \rho=\max(2,q) \text{ and } \nu=p-1.
$$

When applied to \eqref{eq:ex-lp}, the above three  methods need to solve the subproblems of the form 
\[
\min\limits_x \{\innprod{u}{x}:\norm{x}_q \leq 1\}
\]
for $u \in \RR^n$. It is not hard to observe that this problem has a closed-form solution given by
$$
x_i^* = -\norm{u}_q^{-\frac{1}{q-1}}\sign(u_i)|u_i|^{\frac{1}{q-1}},\quad i=1,\ldots,n.
$$

 The instances of problem \eqref{eq:ex-lp} are generated as follows. In particular, we generate matrix $A$ by letting $A=UDU^T$, where $D \in \RR^{n\times n}$ is a diagonal matrix, whose diagonal entries are randomly generated according to  the uniform distribution over $[1,100]$ and $U\in\RR^{n\times n}$ is a randomly generated orthogonal matrix. We set $b=A\bar{x}$ for some $\bar{x}$ generated from a uniform distribution over $\{x\in\RR^n:\norm{x}_q=10\}$. 
 
In this experiment, we consider $p \in \{1.3,\,1.6,\,2\}$, $q \in \{1.5,\,2,\,3\}$ and $m=n \in \{1000,5000\}$. For each choice of $(p,q,n)$, we randomly generate $10$ instances of problem \eqref{eq:ex-lp} by the procedure mentioned above, and apply the aforementioned three conditional gradient methods to solve them, starting with the initial point $x_0=0$ and terminating them once the criterion $\delta_t/\delta_0 \leq 10^{-6}$ is met, where $\delta_t$ and $\delta_0$ are the Frank-Wolfe gap at the iterates $x_t$ and $x_0$, respectively. Table~\ref{table:num1} presents the average CPU time (in seconds) and the average number of iterations of these methods over the $10$ random instances. In detail, the values of $n,q,p$ are given in the first three columns, and the average CPU time and the average number of iterations of Algorithms 1, 2 and the conditional gradient method with step size $\tau_t=2/(t+2)$ are given in the rest of the columns. In addition, Figure~\ref{fig:num1} illustrates the behavior of the best relative Frank-Wolfe gap $\delta_t^*/\delta_0 := \min_{0\leq i \leq t}\delta_i/\delta_0$ 
and the objective value gap $\varphi(x_t)-\widetilde{\varphi}_*$ 
with respect to CPU time on a single random instance of problem \eqref{eq:ex-lp} with $n=5000$, $q=3$, and $p=1.3$, $1.6$, $2$, respectively, 
where $\widetilde{\varphi}_*$ is the minimum objective function value of all iterates generated by the three algorithms. One can see that Algorithm~\ref{alg:main} generally outperforms the other two methods. This is perhaps because: (i) Algorithm~\ref{alg:main}  improves the iteration complexity of the conditional gradient method with step size $\tau_t=2/(t+2)$; (ii) Algorithm~\ref{alg:main} uses an adaptive step size determined by using a constructive local quadratic upper approximation of the objective function and an adaptive line search scheme.

\begin{table}[htbp]
\centering
\begin{tabular}{ccc||lll||lll}
\hline
&  & & \multicolumn{3}{c||}{Average CPU time (sec)} & \multicolumn{3}{c}{Average number of iterations}\\
$n$ & $q$ & $p$ & Algorithm~\ref{alg:FW-exact-ls} & Algorithm~\ref{alg:main} & $\frac{2}{t+2}$  & Algorithm~\ref{alg:FW-exact-ls} & Algorithm~\ref{alg:main} &  $\frac{2}{t+2}$ \\ \hline\hline
1000& 1.5 & 1.3  &  3.51 & 0.054 & 0.56 & 8881.4 & 84.9 & 1404.5\\
 & & 1.6  &  0.0065 & 0.0055 & 0.52 & 13.5 & 6.2 & 1333.5\\
 & & 2.0  &  0.0028 & 0.0037 & 0.44 & 5.0 & 6.2 & 1287.1\\
 & 2.0 & 1.3  &  1.63 & 0.13 & 0.50 & 4901.2 & 252.5 & 1544.4\\
 & & 1.6  &  0.0054 & 0.0060 & 0.44 & 13.8 & 6.9 & 1335.1\\
 & & 2.0  &  0.0020 & 0.0022 & 0.37 & 4.0 & 4.0 & 1299.4\\
 & 3.0 & 1.3  &  10.9 & 1.18 & 1.76 & 27442.3 & 2038.1 & 4449.2\\
 & & 1.6  &  0.028 & 0.012 & 0.52 & 68.4 & 18.5 & 1323.1\\
 & & 2.0  &  0.0036 & 0.0038 & 0.45 & 7.7 & 7.4 & 1289.8\\
 \hline
5000& 1.5 & 1.3  &  20.6 & 1.34 & 13.3 & 2223.8 & 132.5 & 1424.4\\
 & & 1.6  &  0.063 & 0.078 & 12.5 & 5.7 & 6.2 & 1334.6\\
 & & 2.0  &  0.056 & 0.067 & 12.0 & 5.0 & 6.2 & 1288.0\\
 & 2.0 & 1.3  &  7.03 & 3.23 & 13.2 & 809.5 & 341.3 & 1506.4\\
 & & 1.6  &  0.10 & 0.083 & 11.6 & 10.5 & 7.2 & 1335.8\\
 & & 2.0  &  0.044 & 0.043 & 11.2 & 4.0 & 4.0 & 1300.1\\
 & 3.0 & 1.3  &  161.2 & 28.4 & 37.3 & 17364.6 & 2827.7 & 3972.8\\
 & & 1.6  &  0.41 & 0.20 & 12.4 & 43.7 & 18.7 & 1323.4\\
 & & 2.0  &  0.084 & 0.083 & 12.0 & 7.8 & 7.8 & 1289.9\\
    \hline
\end{tabular}
\caption{Numerical results for problem \eqref{eq:ex-lp}}
\label{table:num1}
\end{table}

\begin{figure}[htbp]
 \begin{minipage}{0.32\hsize}
  \begin{center}
   \includegraphics[width=50mm]{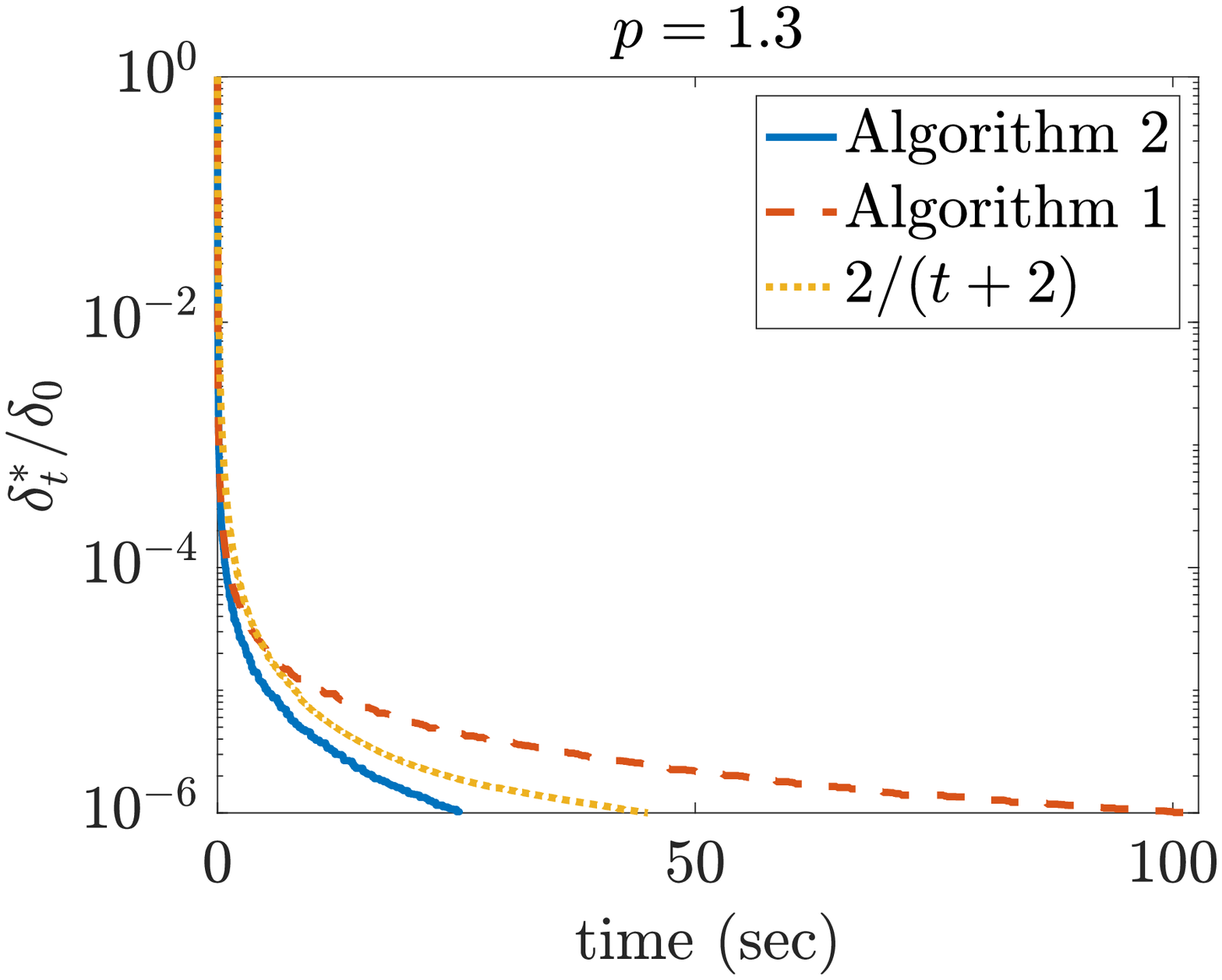}
  \end{center}
 \end{minipage}
 \begin{minipage}{0.32\hsize}
 \begin{center}
  \includegraphics[width=50mm]{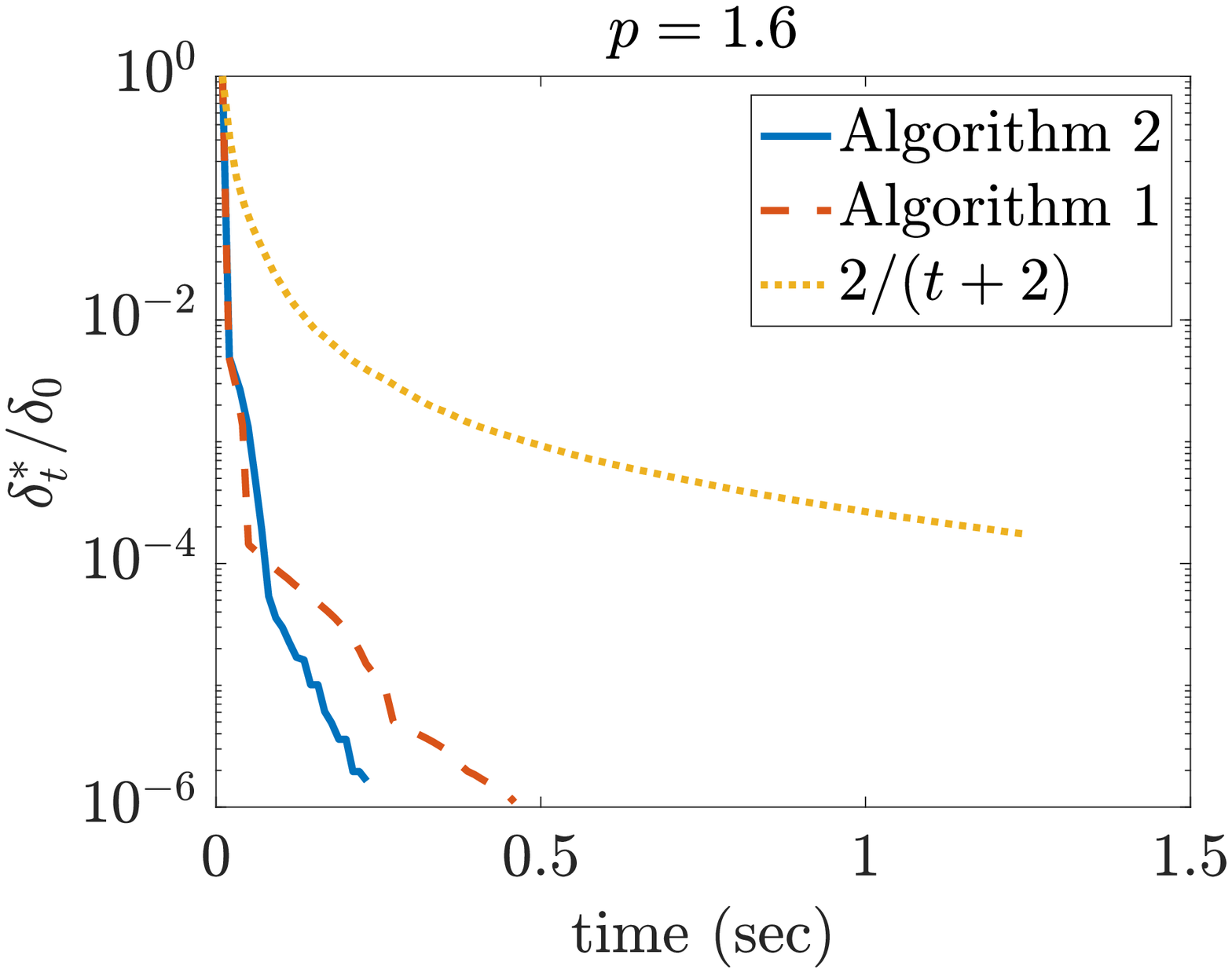}
 \end{center}
 \end{minipage}
 \begin{minipage}{0.32\hsize}
 \begin{center}
  \includegraphics[width=50mm]{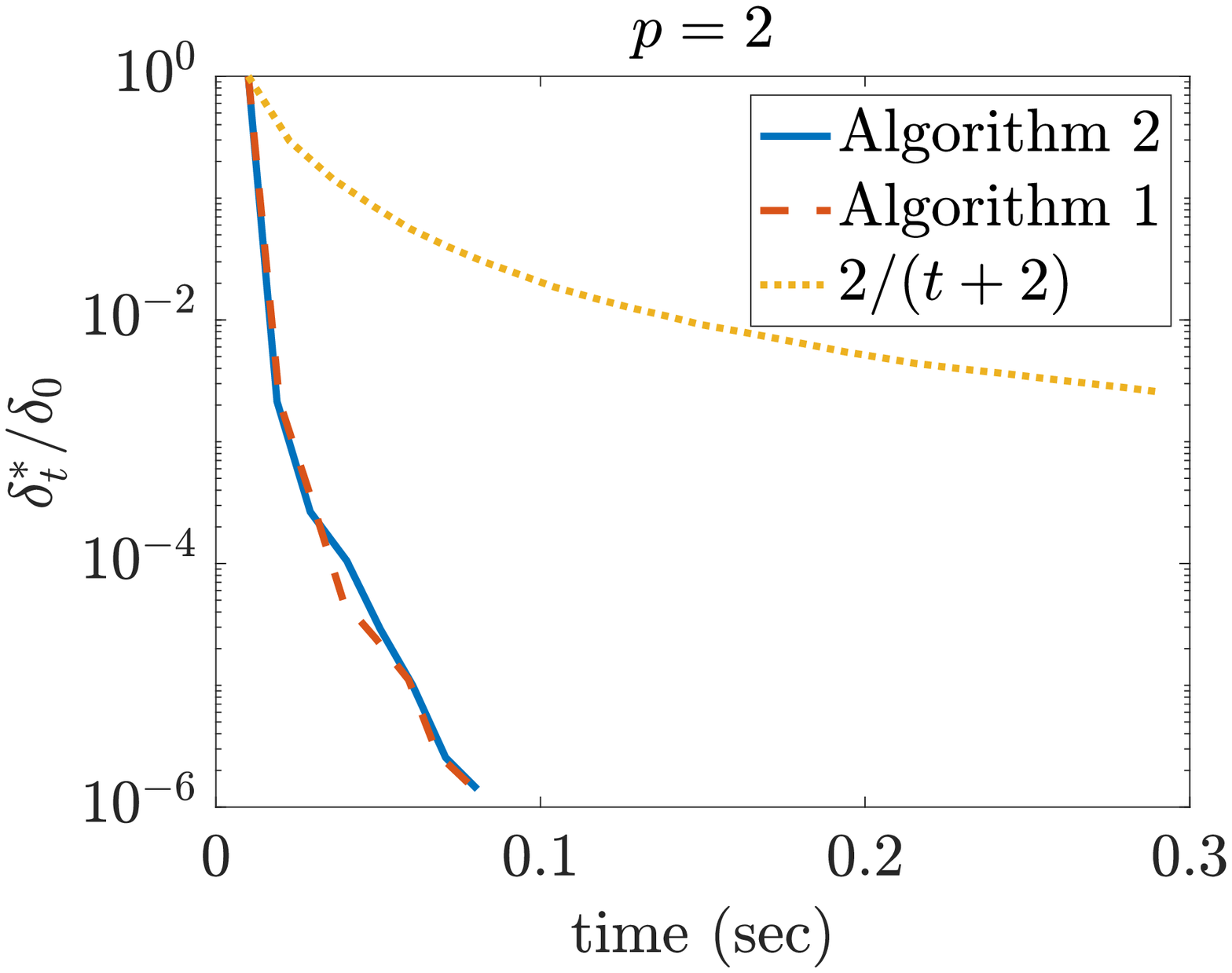}
 \end{center}
 \end{minipage}
 \\[2mm]
 \begin{minipage}{0.32\hsize}
  \begin{center}
   \includegraphics[width=50mm]{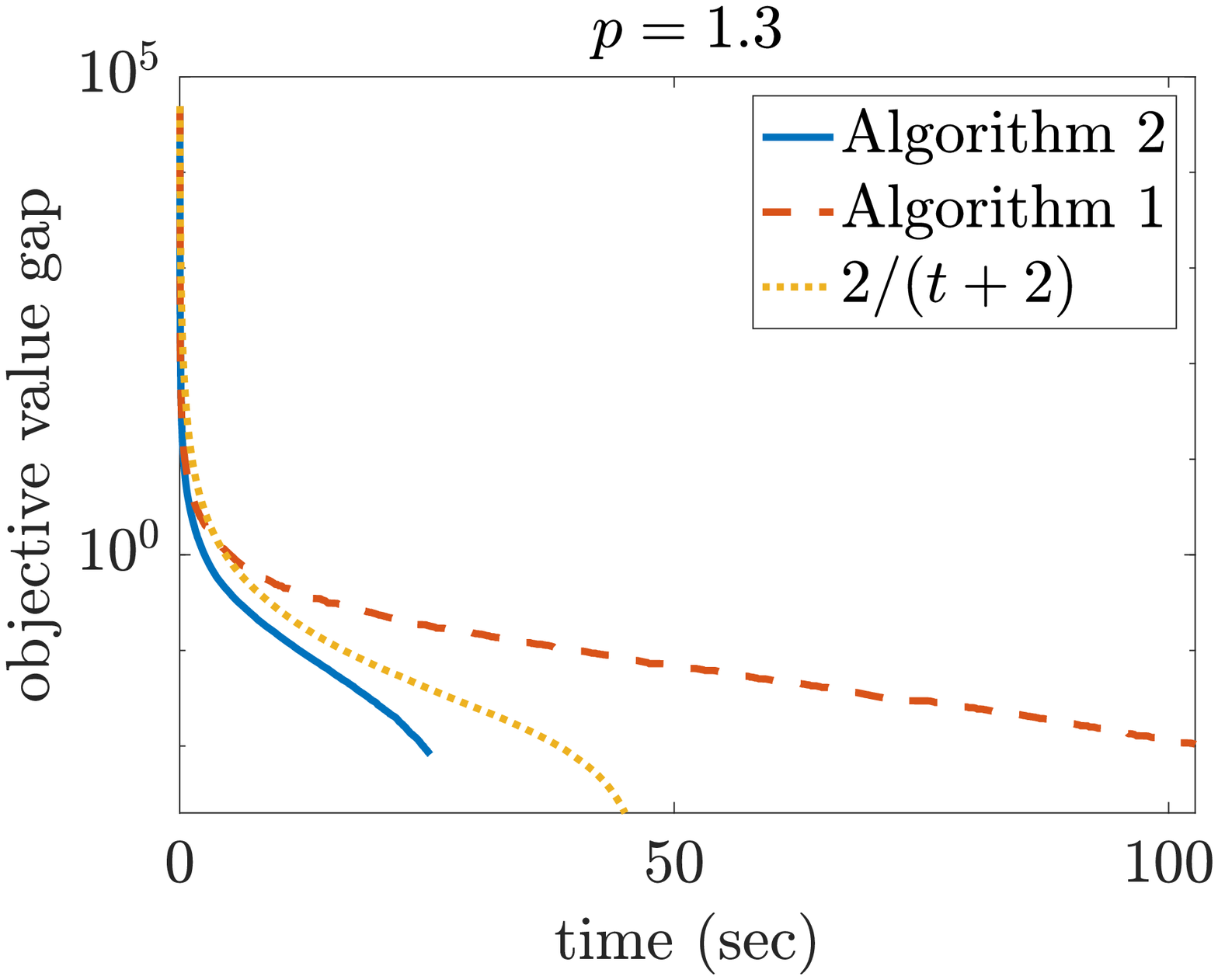}
  \end{center}
 \end{minipage}
 \begin{minipage}{0.32\hsize}
 \begin{center}
  \includegraphics[width=50mm]{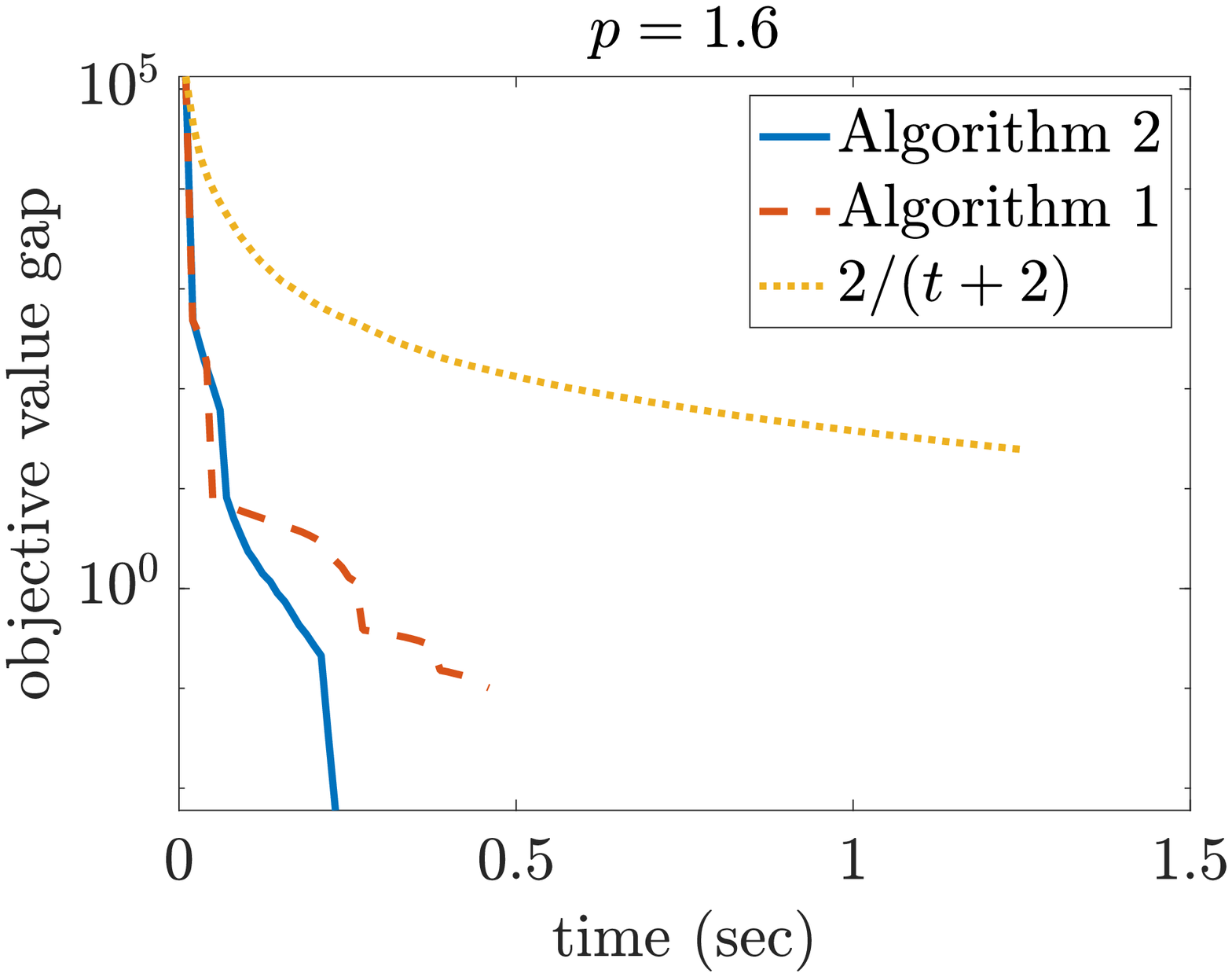}
 \end{center}
 \end{minipage}
 \begin{minipage}{0.32\hsize}
 \begin{center}
  \includegraphics[width=50mm]{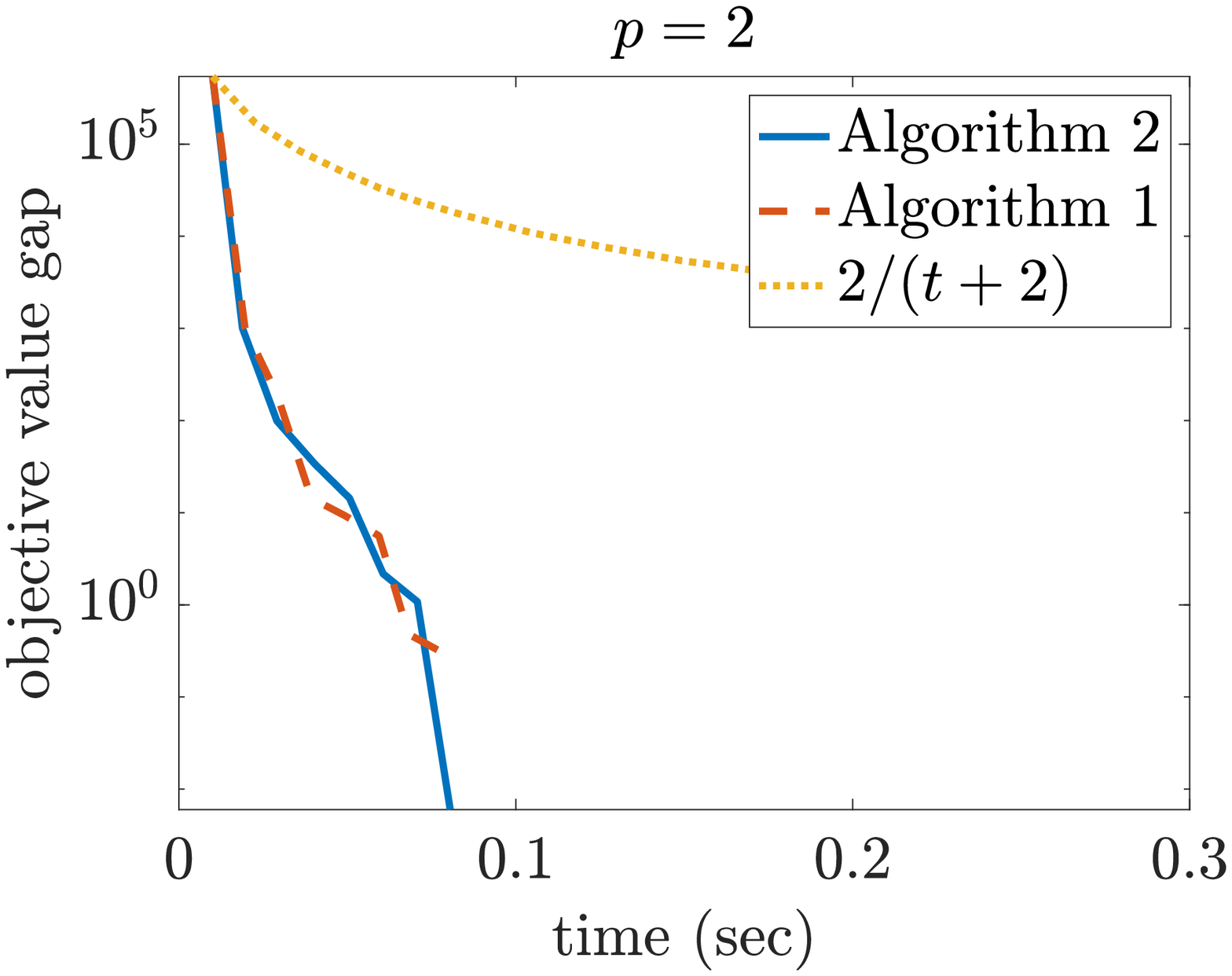}
 \end{center}
 \end{minipage}
 \caption{Numerical results on a single random instance of problem \eqref{eq:ex-lp} with $n=5000$, $q=3$, and $p=1.3$, $1.6$, $2$, respectively. These sub-figures illustrate the behavior of the best relative Frank-Wolfe gap $\delta_t^*/\delta_0 := \min_{0\leq i \leq t}\delta_i/\delta_0$ and the objective value gap $\varphi(x_t)-\widetilde{\varphi}_*$ with respect to CPU time in seconds, where $\widetilde{\varphi}_*$ is the minimum objective function value of all iterates generated by the three algorithms for solving one problem instance.}
 \label{fig:num1}
\end{figure}


\subsection{Entropy Regularized $\ell_p$-Norm Minimization}
In our second experiment, we consider the following problem:
\begin{equation}\label{eq:ex-entropy}
\begin{array}{ll}
\text{min} & \frac{1}{p}\norm{Ax-b}_p^p+\lambda \sum_{i=1}^n x_i\log x_i\\
\text{s.t.} & x \in \Delta_n := \{z \in \RR^n: \sum^n_{i=1}z_i=1,~z_i\geq 0,~i=1,\ldots,n\},
\end{array}
\end{equation}
where $p >1$, $\lambda>0$, $A \in \RR^{m\times n}$, and $b \in \RR^m$.
Note that $f(x)=\frac{1}{p}\norm{Ax-b}_p^p$ is weakly smooth and $g(x)=\lambda \sum_{i=1}^n x_i\log x_i + \iota_{\Delta_n}(x)$ is $\lambda$-strongly convex with respect to the $\ell_1$-norm \cite[e.g., see][]{BT03}, where $\iota_{\Delta_n}$ denotes the indicator function of $\Delta_n$. Thus, problem \eqref{eq:ex-entropy} is a special case of Example~\ref{ex:uniconv-func} with $\nu=p-1$ and $\rho=2$.

We next apply Algorithms~\ref{alg:FW-exact-ls} and~\ref{alg:main} with the same settings as in Subsection~\ref{lq-ball} and the conditional gradient method with the diminishing step size 
\begin{equation}\label{eq:dms2}
\tau_t = \frac{6(t+1)}{(t+2)(2t+3)}
\end{equation}
proposed by \cite{Nes18} to solve problem \eqref{eq:ex-entropy}, and compare their performance. The latter method is similar to Algorithm~\ref{alg:FW-exact-ls} except the choice of $\tau_t$ and enjoys an iteration complexity of 
$O(\ep^{-1/(2(p-1))})$ for finding an approximate solution of \eqref{eq:ex-entropy} with an $\ep$-Frank-Wolfe gap \cite[]{Nes18}. In addition, as seen from Section~\ref{sec:iter-compl}, Algorithms~\ref{alg:FW-exact-ls} and \ref{alg:main} enjoy the following iteration complexity for finding an approximate solution of \eqref{eq:ex-entropy} with an $\ep$-Frank-Wolfe gap:
$$
\left\{
\begin{array}{ll}
O\left(\frac{\norm{A}_2^2}{\lambda}\log\frac{1}{\ep}\right)  & \text{if } p=2,\\
O(\ep^{-(2-p)/(2(p-1))})) & \text{otherwise}.
\end{array}
\right.
$$

When applied to \eqref{eq:ex-entropy}, the aforementioned three methods need to solve the subproblems of the form 
\[
\min_{x \in \Delta_n} \innprod{u}{x}+\lambda \sum_{i=1}^n x_i\log x_i
\]
for some $u\in\RR^n$. It is well-known that this problem has a closed-form solution given by
\[
x^*_i = \frac{e^{-u_i/\lambda}}{\sum_{j=1}^n e^{-u_j/\lambda}},\quad i=1,\ldots,n.
\]

 The instances of problem \eqref{eq:ex-entropy} are generated as follows. In particular,  we generate matrix $A$ with  $\norm{A}_2 \leq 100$ by letting $A=VDU^T$, where $D$ is a $m\times m$ diagonal matrix, whose diagonal entries are randomly generated according to the uniform distribution over $[0,100]$, and $U\in\RR^{n\times m}$ and $V \in \RR^{m\times m}$ are randomly generated orthonormal matrices. Each entry of $b \in \RR^m$ is generated from the uniform distribution on $[0,1]$. 
 
In this experiment, we consider $m = n/2 \in \{1000,5000\}$, $p \in \{1.5, 1.75, 2\}$, and $\lambda \in \{1,10,50\}$. For each choice of $(m,n,p,\lambda)$, we randomly generate $10$ instances of problem \eqref{eq:ex-entropy} by the procedure mentioned above, and apply the aforementioned three conditional gradient methods to solve them, starting with the initial point $x_0=(1/n,\ldots,1/n)^T$ and terminating them once the criterion $\delta_t/\delta_0 \leq 10^{-8}$ is met, where $\delta_t$ and $\delta_0$ are the Frank-Wolfe gap at the iterates $x_t$ and $x_0$, respectively. Table~\ref{table:num2} presents the average CPU time (in seconds) and the average number of iterations of these methods over the $10$ random instances. 
In detail, the values of $m,p,\lambda$ are given in the first three columns, and the average CPU time and the average number of iterations of Algorithms 1, 2 and the conditional gradient method with step size $\tau_t=6(t+1)/((t+2)(2t+3))$ are given in the rest of the columns. In addition, Figure~\ref{fig:num2} illustrates the behavior of the best relative Frank-Wolfe gap $\delta_t^*/\delta_0 := \min_{0\leq i \leq t}\delta_i/\delta_0$ 
and the objective value gap
$\varphi(x_t)-\widetilde{\varphi}_*$ 
 with respect to CPU time on a single random instance of problem \eqref{eq:ex-entropy} with $m=5000$, $n=$\,10,000, $\lambda=10$, and $p=1.5, 1.75, 2$, respectively, where $\widetilde{\varphi}_*$ is the minimum objective function value of all iterates generated by the three algorithms. One can see that Algorithm~\ref{alg:main} generally outperforms the other two methods, which is perhaps for the similar reasons as explained at the end of Subsection~\ref{lq-ball}.

\begin{table}[p]
\centering
\resizebox{\textwidth}{!}{
\begin{tabular}{ccc||lll||lll}
\hline
&  & & \multicolumn{3}{c||}{Average CPU time (sec)} & \multicolumn{3}{c}{Average number of iterations}\\
$m$ & $p$ & $\lambda$ & Algorithm~\ref{alg:FW-exact-ls} & Algorithm~\ref{alg:main} & Rule~\eqref{eq:dms2} & Algorithm~\ref{alg:FW-exact-ls} & Algorithm~\ref{alg:main} & Rule~\eqref{eq:dms2} \\ \hline\hline
1000& 1.5 & 1  &  314.3 & 0.47 & 0.49 & 640908.6 & 607.5 & 982.4\\
& & 10  &  33.4 & 0.030 & 0.046 & 67184.7 & 34.1 & 87.4\\
& & 50  &  2.60 & 0.010 & 0.016 & 5186.6 & 8.1 & 28.1\\
& 1.75 & 1  &  0.77 & 0.17 & 0.39 & 1446.4 & 210.8 & 731.3\\
& & 10  &  0.21 & 0.020 & 0.048 & 404.3 & 21.4 & 85.0\\
& & 50  &  0.015 & 0.0056 & 0.016 & 27.6 & 5.0 & 28.0\\
& 2 & 1  &  0.12 & 0.14 & 0.32 & 236.8 & 237.1 & 647.0\\
& & 10  &  0.021 & 0.015 & 0.050 & 40.2 & 24.0 & 94.7\\
& & 50  &  0.0035 & 0.0047 & 0.015 & 5.0 & 5.1 & 27.4\\
\hline
5000& 1.5 & 1  &  8300.7 & 5.23 & 8.99 & 459440.4 & 272.6 & 518.4\\
& & 10  &  1943.9 & 0.33& 1.77 & 107117.5 & 16.3 & 100.4\\
& & 50  &  16.5 & 0.10 & 0.55 & 952.0 & 4.3 & 31.0\\
& 1.75 & 1  &  30.9 & 2.77 & 8.99 & 1716.2 & 143.0 & 513.6\\
& & 10  &  6.28 & 0.30 & 1.86 & 353.9 & 14.7 & 106.3\\
& & 50  &  0.16 & 0.071 & 0.56 & 8.3 & 3.0 & 30.6\\
& 2 & 1  &  4.00 & 2.67 & 8.99 & 230.7 & 150.7 & 513.7\\
& & 10  &  0.78 & 0.33 & 2.11 & 44.8 & 17.5 & 119.2\\
& & 50  &  0.071 & 0.070 & 0.55 & 3.0 & 3.0 & 31.0\\
    \hline
\end{tabular}
}
\caption{Numerical results for problem \eqref{eq:ex-entropy}}
\label{table:num2}
\end{table}

\begin{figure}[p]
 \begin{minipage}{0.32\hsize}
  \begin{center}
   \includegraphics[width=50mm]{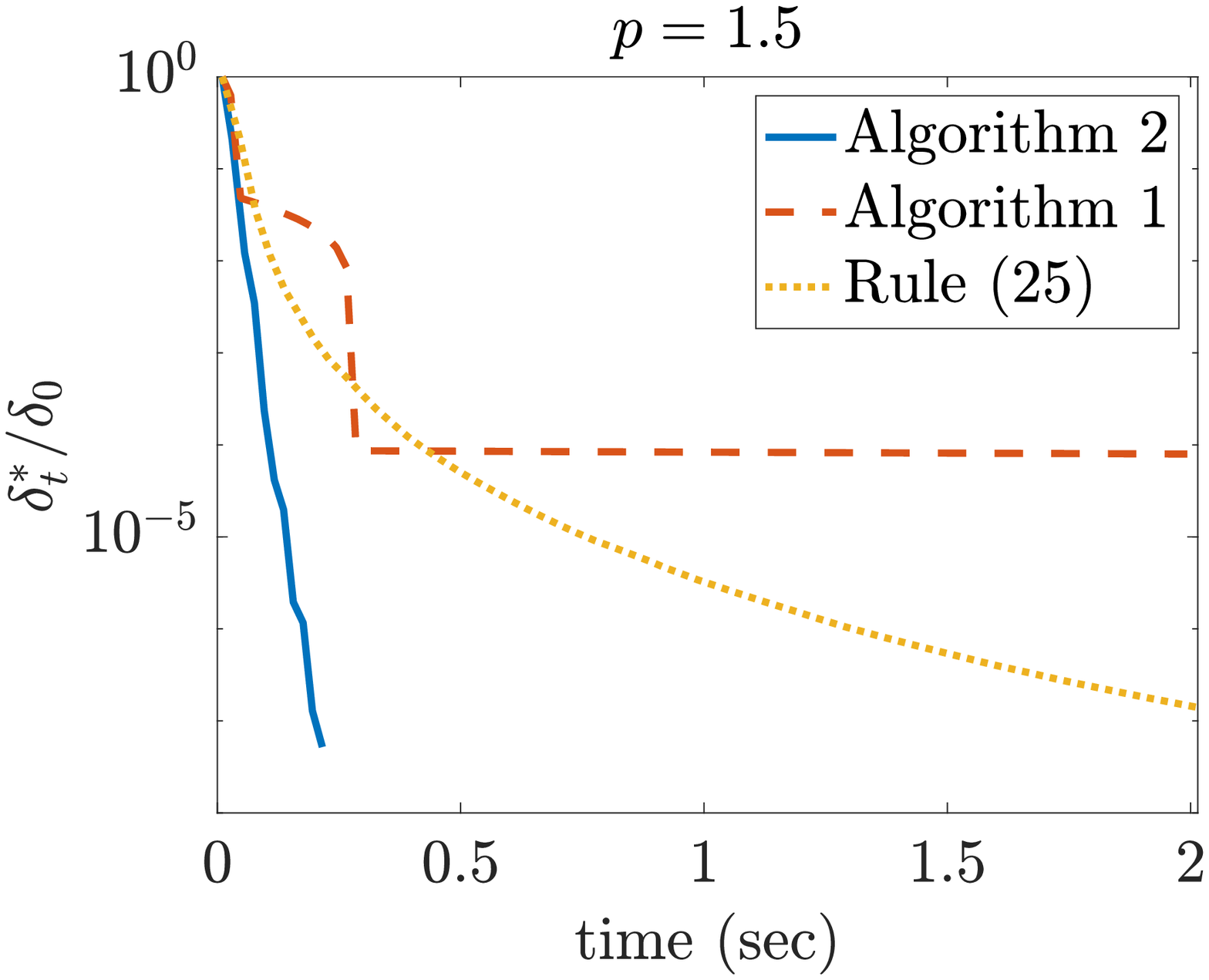}
  \end{center}
 \end{minipage}
 \begin{minipage}{0.32\hsize}
 \begin{center}
  \includegraphics[width=50mm]{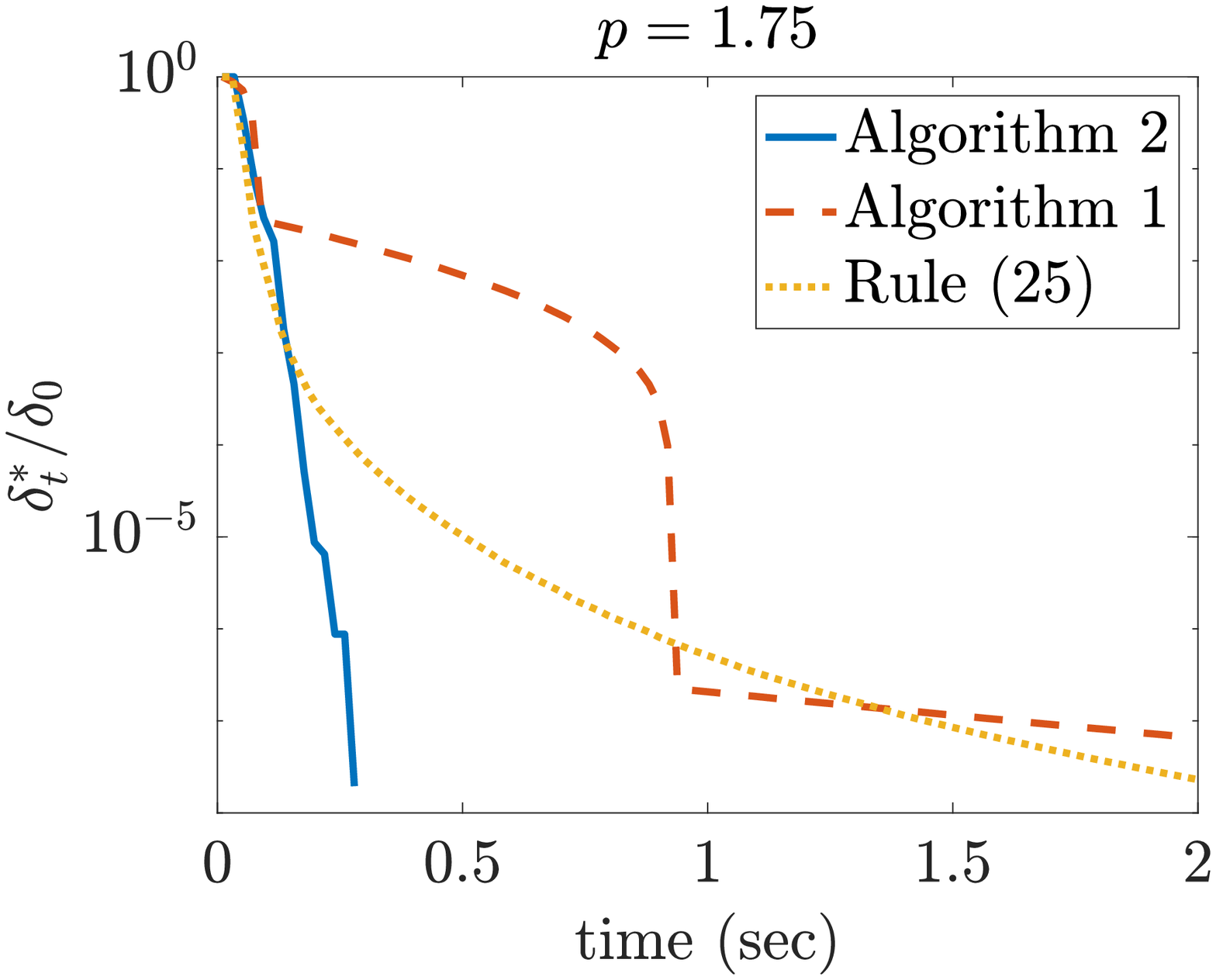}
 \end{center}
 \end{minipage}
 \begin{minipage}{0.32\hsize}
 \begin{center}
  \includegraphics[width=50mm]{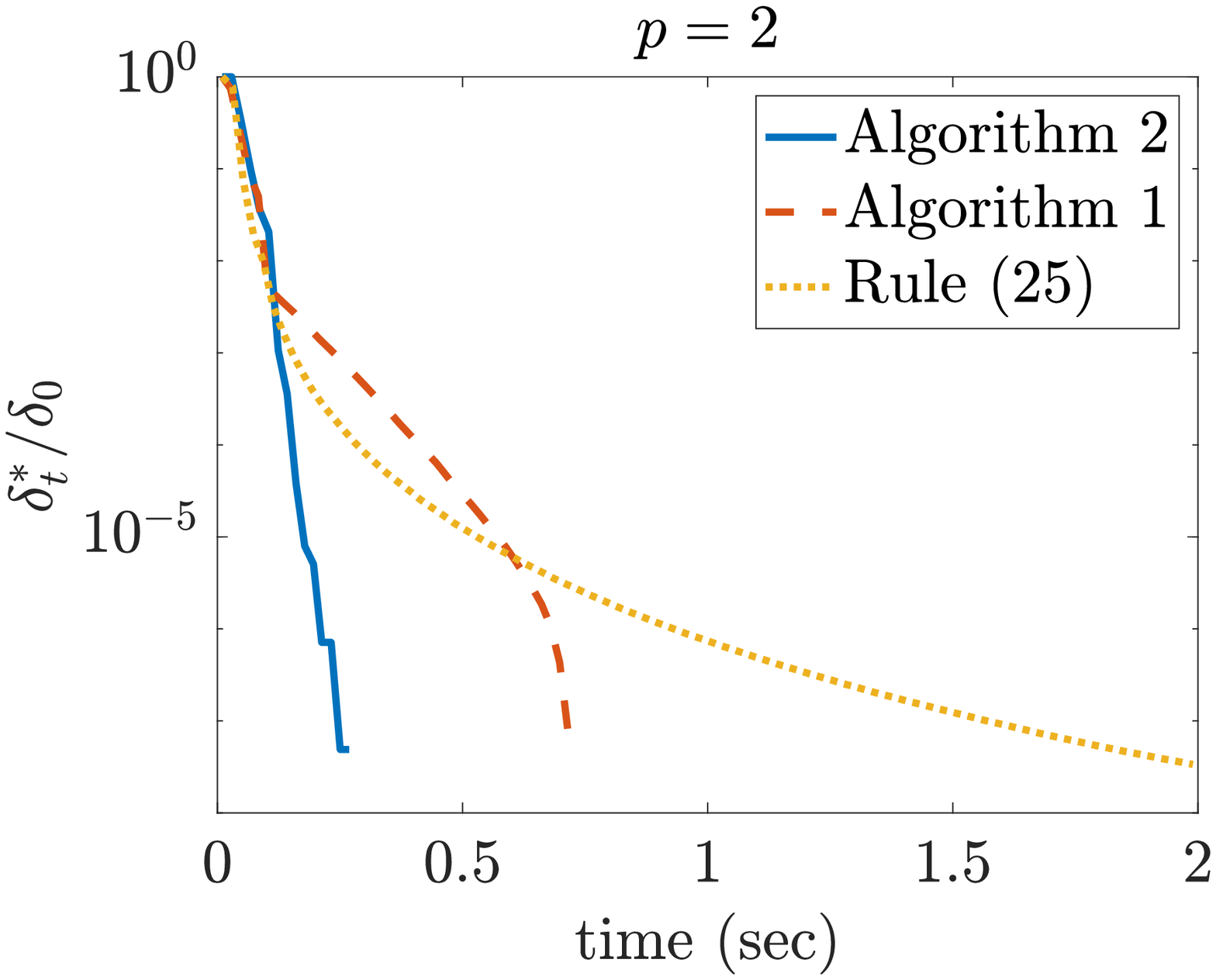}
 \end{center}
 \end{minipage}
  \\[2mm]
 \begin{minipage}{0.32\hsize}
  \begin{center}
   \includegraphics[width=50mm]{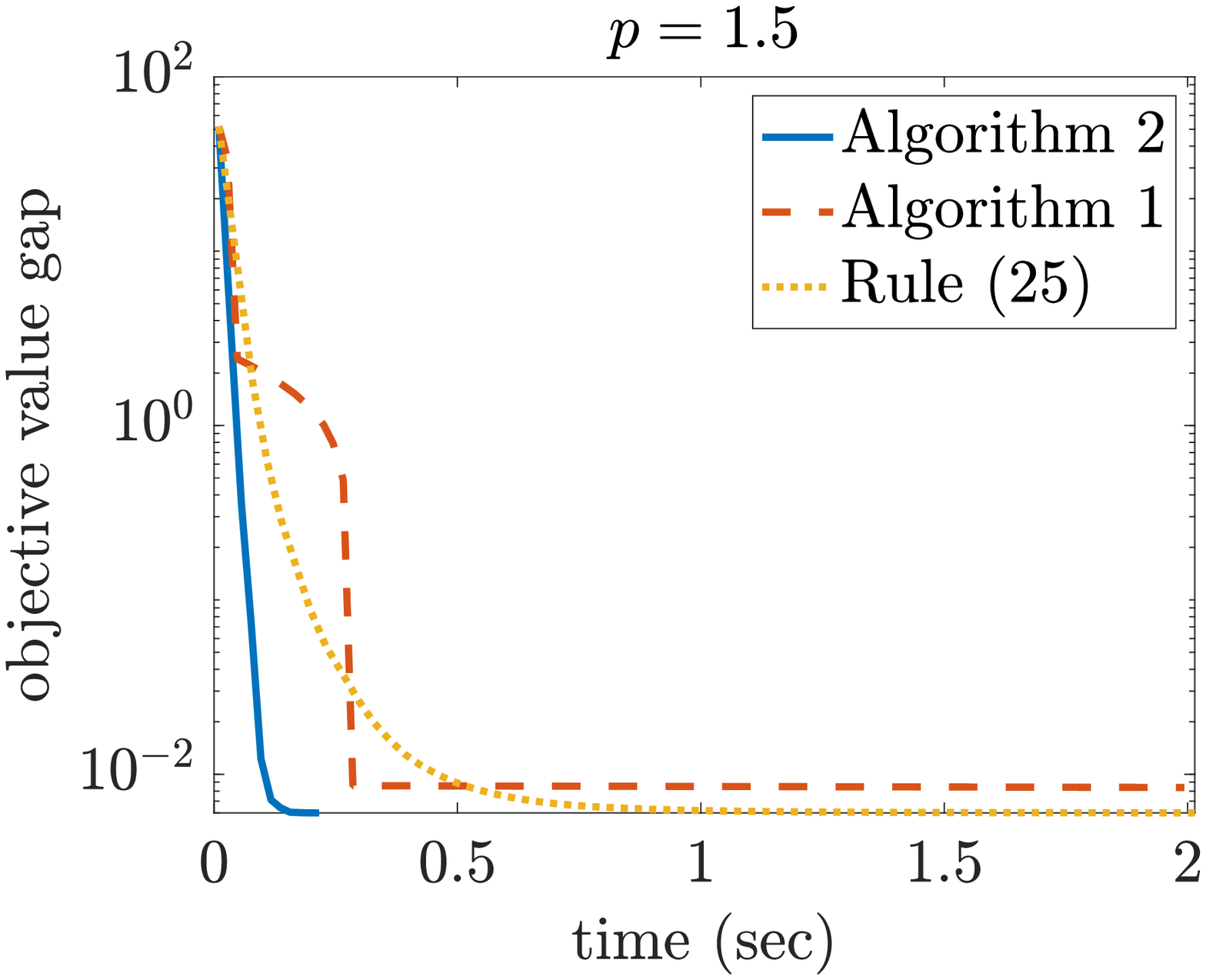}
  \end{center}
 \end{minipage}
 \begin{minipage}{0.32\hsize}
 \begin{center}
  \includegraphics[width=50mm]{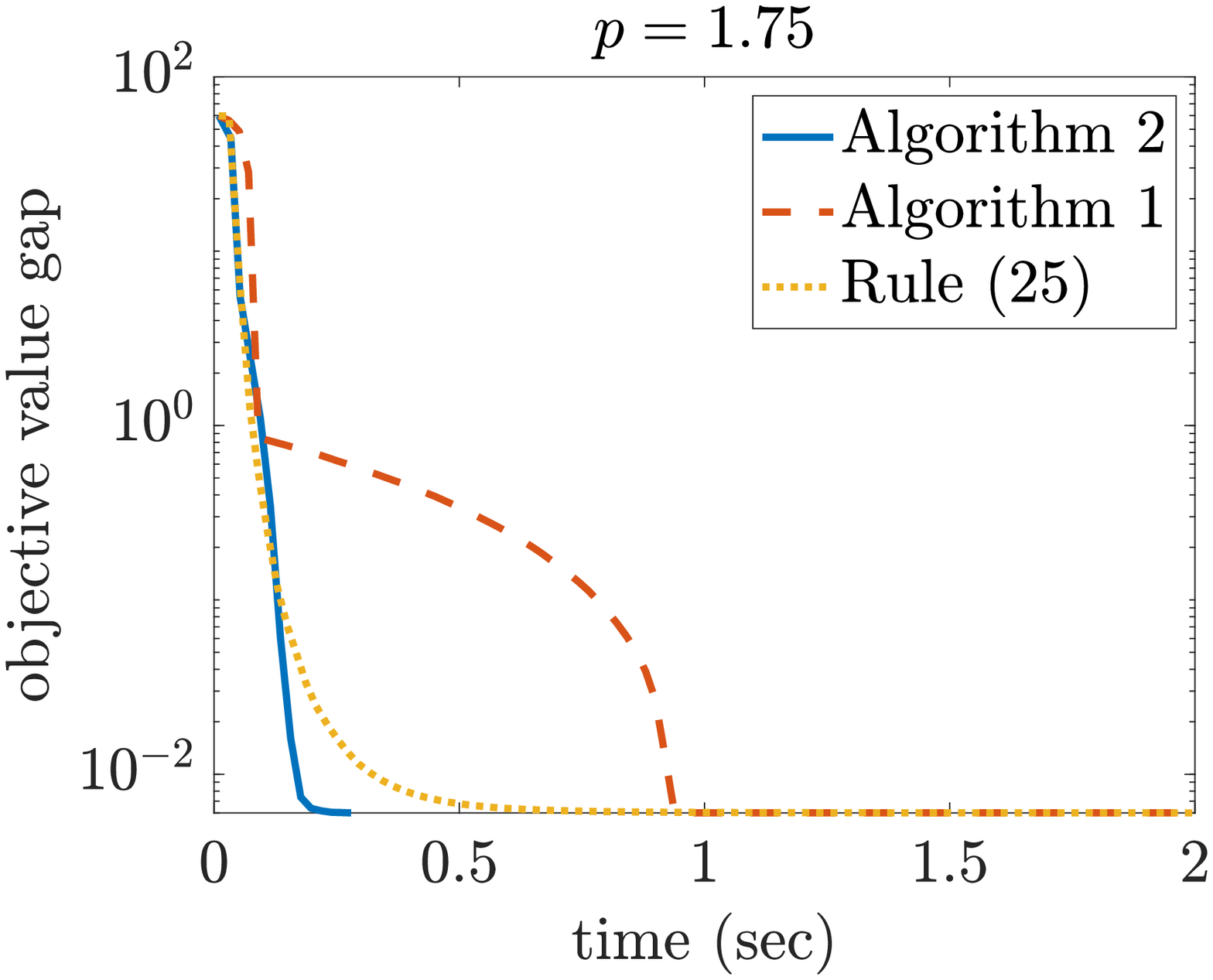}
 \end{center}
 \end{minipage}
 \begin{minipage}{0.32\hsize}
 \begin{center}
  \includegraphics[width=50mm]{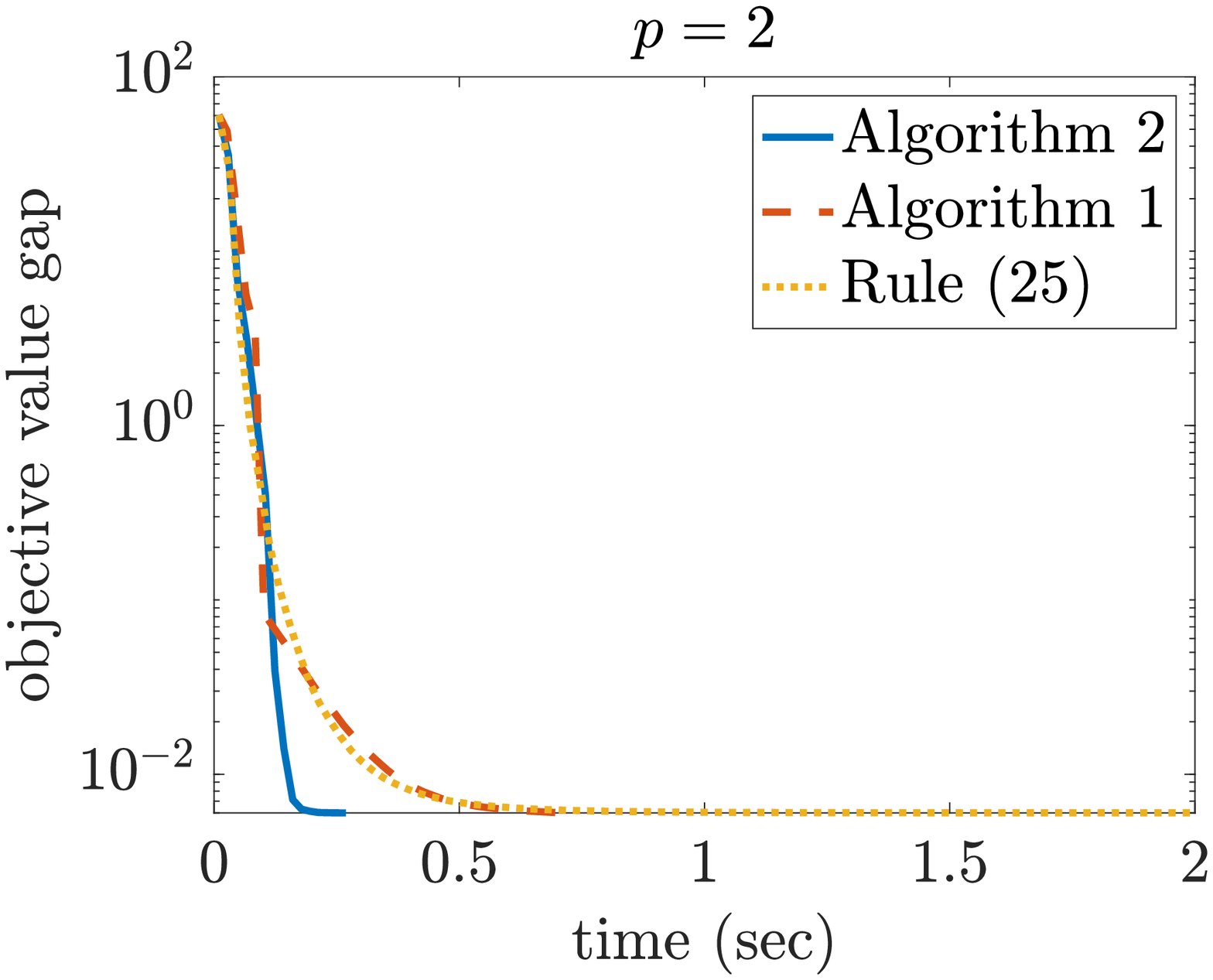}
 \end{center}
 \end{minipage}
 \caption{Numerical results on a single random instance  of problem \eqref{eq:ex-entropy} with $m=5000$, $n=$\,10,000,  $\lambda=10$, and $p=1.5, 1.75, 2$, respectively. These sub-figures illustrate the behavior of the best relative Frank-Wolfe gap $\delta_t^*/\delta_0 := \min_{0\leq i \leq t}\delta_i/\delta_0$ and the objective function value gap $\varphi(x_t)-\widetilde{\varphi}_*$ with respect to CPU time in seconds. Here, $\widetilde{\varphi}_*$ denotes the minimum objective function value of all iterates generated by the three algorithms for solving one problem instance.}
 \label{fig:num2}
\end{figure}

\subsection{Simplex-Constrained Nonnegative Matrix Factorization}

In our third experiment, we consider the following simplex-constrained nonnegative matrix factorization problem \cite[see][]{OlNiFa22nmf}:
\begin{equation}\label{eq:nmf}
\begin{array}{rl}
\min & \frac{1}{2}\|X-UV\|_F^2 + \lambda(\|U\|_F^2+\|V\|_F^2)\\[1pt]
\text{s.t.} & U\in B_{n, k}:=\{U\in\RR^{n\times k} ~|~ 0\leq U_{ij}\le\alpha,\ 1\le i\le n,\, 1\le j\le k\},\\[1pt]
&V\in \Delta_{k, m}:=\{V\in\RR^{k\times m}_+ ~|~ V^T1_k = 1_m\},
\end{array}
\end{equation}
where $X \in \RR^{n \times m}$, $\alpha,\lambda>0$,
 $\|\cdot\|_F$ is the Frobenius norm, and $1_d\in\RR^d$ is the all-ones vector for any $d\ge 1$. Problem \eqref{eq:nmf} can be viewed as $\min_{U,V}\{\varphi(U,V):=f(U,V)+g(U,V)\}$ with
$$
f(U,V)=\frac{1}{2}\|X-UV\|_F^2,\quad g(U,V)=\lambda(\|U\|_F^2 + \|V\|_F^2) + \iota_{B_{n,k}}(U)+\iota_{\Delta_{k,m}}(V),
$$
where $\iota_{B_{n,k}}$ and $\iota_{\Delta_{k,m}}$ denote the indicator function of $B_{n,k}$ and $\Delta_{k,m}$, respectively. 
Notice that $f$ is nonconvex and smooth, $\dom{g}$ is compact, and $g$ is strongly convex.  Clearly, problem~\eqref{eq:nmf} is a special case of problem \eqref{main-prob} satisfying Assumptions \ref{assump:fg} and \ref{assump:g} with $\nu=1$ and $\rho=2$. 

We next apply the following two conditional gradient methods 
to solve problem~\eqref{eq:nmf}, and compare their performance.
\begin{itemize}
\item Algorithm~\ref{alg:main} with $\|\cdot\|=\|\cdot\|_F$.
\item The conditional gradient method with line search \cite[Algorithm~2]{Ghadimi19}, abbreviated as CGM-LS.
We set the parameters $\gamma=0.5$ and $\delta=\ep \delta_0/4$ for this method as suggested in \cite[]{Ghadimi19}, where $\delta_0$ is the Frank-Wolfe gap at the initial point and $\ep$ is the targeted tolerance for the final relative Frank-Wolfe gap. 
\end{itemize}
It shall be mentioned that these two methods enjoy an iteration complexity of $O(1/\varepsilon)$ for finding an approximate solution of \eqref{eq:nmf} with an $\ep$-Frank-Wolfe gap (see Section~\ref{sec:iter-compl} and equation (1.8) in \cite[]{Ghadimi19}).

The data matrix $X \in \RR^{n\times m}$ for problem~\eqref{eq:nmf} is generated as follows. In particular, we first randomly generate $U^* \in \RR^{n\times k}$ with all entries following the uniform distribution over $[0,\alpha]$. We next randomly generate $\widetilde{V} \in \RR^{k\times m}$ with all entries following the standard normal distribution and set $V^*=\widetilde{V}D$, where $D \in \RR^{m\times m}$ is a diagonal matrix such that $(V^*)^T1_k=1_m$. Finally, we set $X=U^*V^*+E$, where the entries of $E \in \RR^{n\times m}$ follow the normal distribution with mean zero and standard deviation $0.01$.

In this experiment, we set $\lambda=0.01$, $\alpha=2$, and
consider $m=n\in\{100,200,300,400,500\}$ and $k\in\{5,10\}$. For each choice of $(m,n,k)$, we randomly generate $10$ instances of problem~\eqref{eq:nmf} by the procedure mentioned above. Then we apply the aforementioned two conditional gradient methods to solve them with the initial point $U^0$ and $V^0$ being the  matrices of all entries equal to $1$ and $1/k$, respectively, and terminate the methods once the criterion $\delta_t/\delta_0\le 10^{-5}$ is met, where $\delta_t$ is the Frank-Wolfe gap at the $t$-th iteration $(U^t,V^t)$. The computational results are presented in Table~\ref{table:num3}. In particular, the values of $m$ and $k$ are given in the first two columns, and the average CPU time (in seconds) and the average number of iterations over each set of 10 random instances for these methods are given in the rest of the columns. 
Besides, in Figure~\ref{fig:num3} we illustrate the behavior of the best relative Frank-Wolfe gap $\delta_t^*/\delta_0 := \min_{0\leq i \leq t}\delta_i/\delta_0$ and the relative objective function value $\varphi(U^t,V^t)/\varphi(U^0,V^0)$ with respect to CPU time on a single random instance of problem \eqref{eq:ex-entropy} with $m=n=300$, $\lambda=0.01$, $\alpha = 2$, and $k=5,10$, respectively.

One can observe that Algorithm~\ref{alg:main} significantly outperforms the conditional gradient method with line search proposed in \cite{Ghadimi19}. This is perhaps because: (i) the line search criterion of the conditional gradient method in \cite{Ghadimi19} explicitly depends on the targeted accuracy $\ep$, while the line search criterion of Algorithm~\ref{alg:main} does not; (ii) at each iteration, the initial trial step size in Algorithm~\ref{alg:main} is determined by using a constructive local quadratic upper approximation of the objective function, while the line search procedure in \cite{Ghadimi19} does not use such a novel scheme.

\begin{table}[p]
\centering
{
\begin{tabular}{cc||ll||ll}
\hline
  & & \multicolumn{2}{c||}{Average CPU time (sec)} & \multicolumn{2}{c}{Average number of iterations}\\
$m$ & $k$ & Algorithm~\ref{alg:main} & CGM-LS & Algorithm~\ref{alg:main} & CGM-LS \\ \hline\hline
100 & 5 & 0.71 & 6.17 & 172.4 & 1139.1 \\
    & 10 & 0.34 & 2.62 & 55.7 & 355.1 \\
200 & 5 & 2.26 & 17.09 & 329.8 & 2086.4 \\
    & 10 & 1.35 & 9.97 & 124.9 & 803.7 \\
300 & 5 & 6.41 & 49.24 & 624.1 & 4035.1 \\
    & 10 & 3.36 & 30.67 & 194.3 & 1398.2 \\
400 & 5 & 9.89 & 62.94 & 796.0 & 4208.8 \\
    & 10 & 4.87 & 41.01 & 230.3 & 1474.6 \\
500 & 5 & 14.74 & 179.24 & 904.3 & 7131.6 \\
    & 10 & 6.35 & 79.91 & 263.3 & 2588.0 \\
\hline\hline
\end{tabular}
}
\caption{Numerical results for problem~\eqref{eq:nmf}}
\label{table:num3}
\end{table}

\begin{figure}[p]
 \begin{minipage}{0.48\hsize}
 \begin{center}
  \includegraphics[width=70mm]{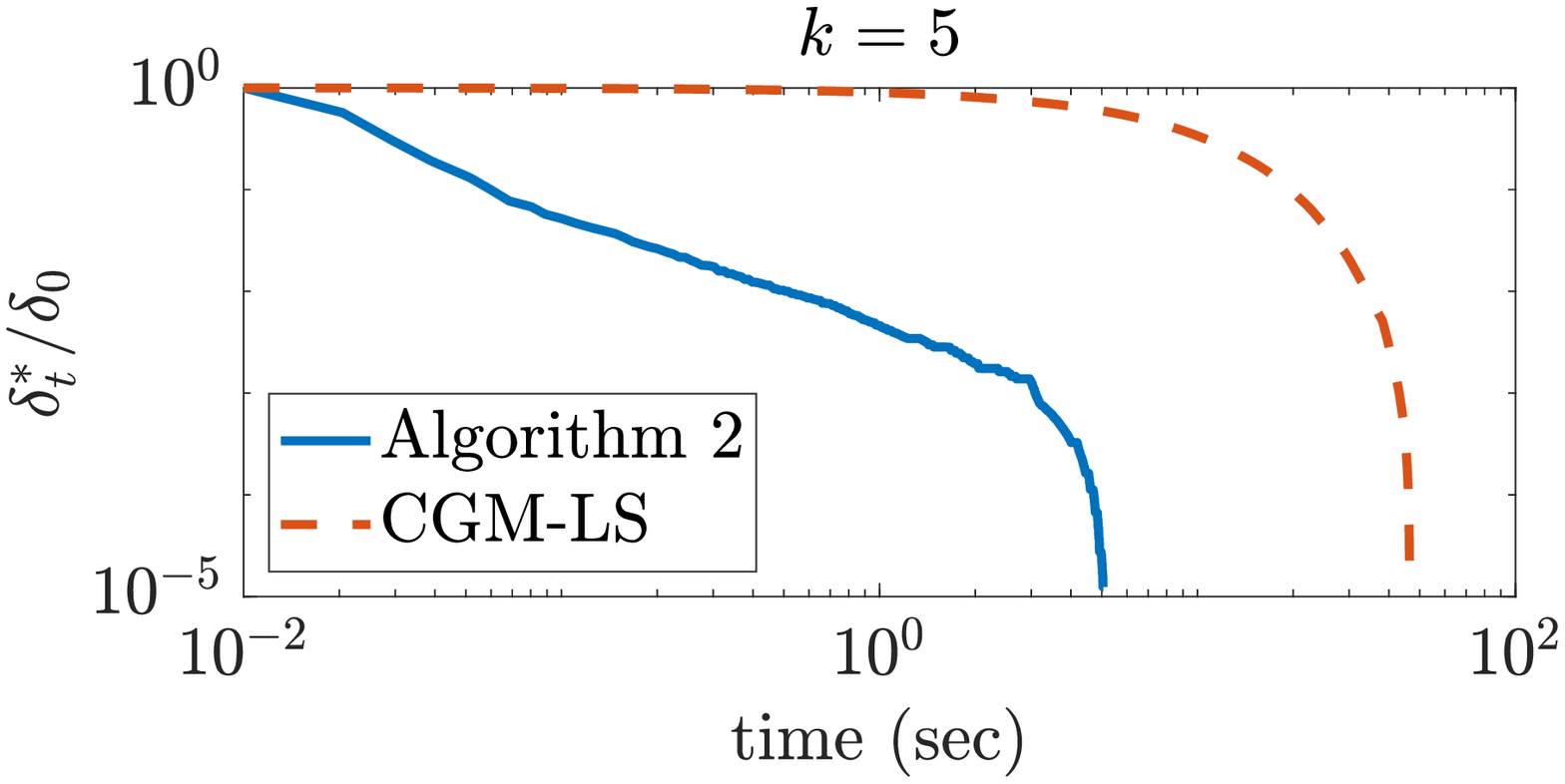}
 \end{center}
 \end{minipage}
 \begin{minipage}{0.48\hsize}
 \begin{center}
  \includegraphics[width=70mm]{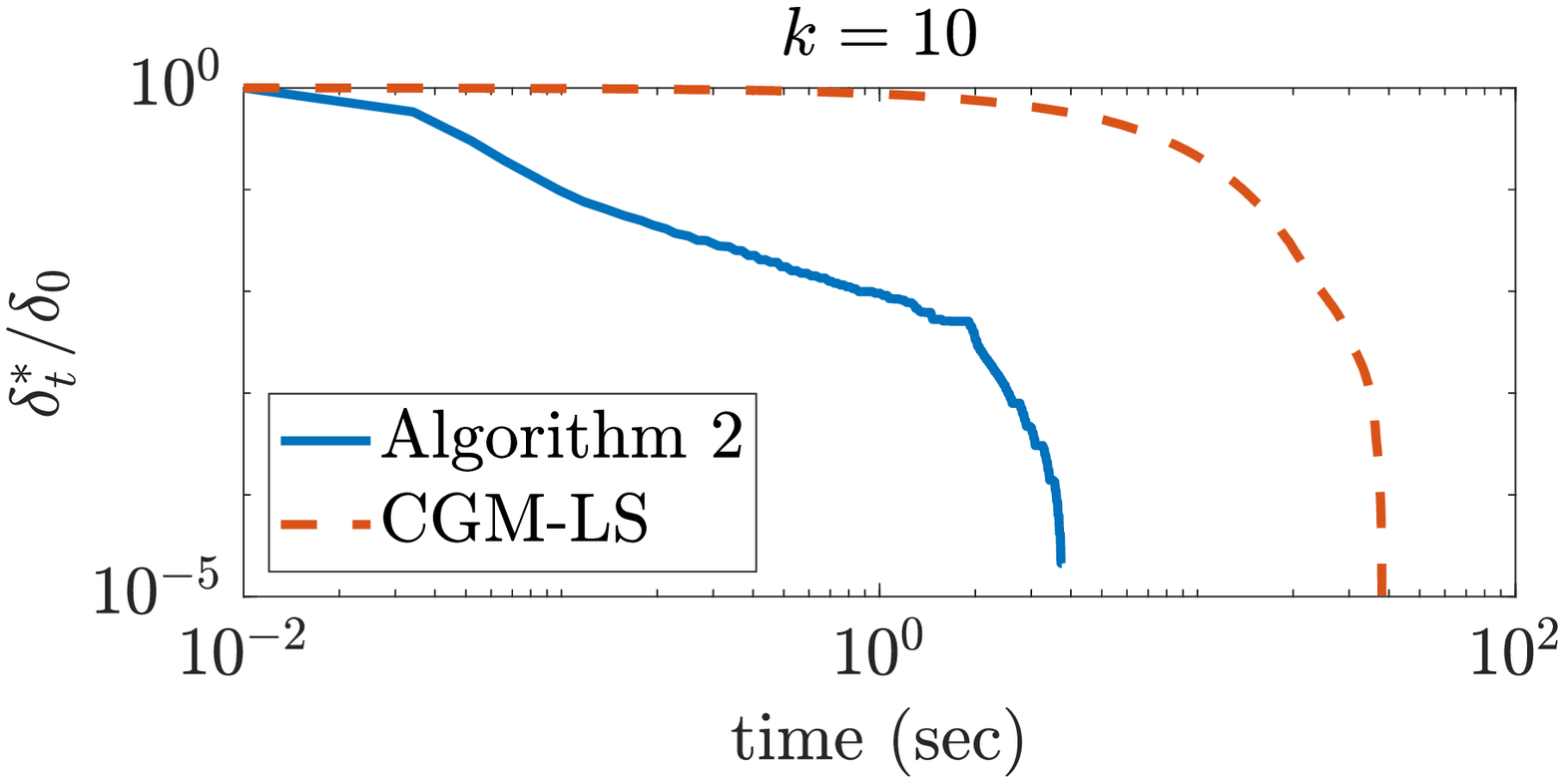}
 \end{center}
 \end{minipage}\\[2mm]
 \begin{minipage}{0.48\hsize}
 \begin{center}
  \includegraphics[width=70mm]{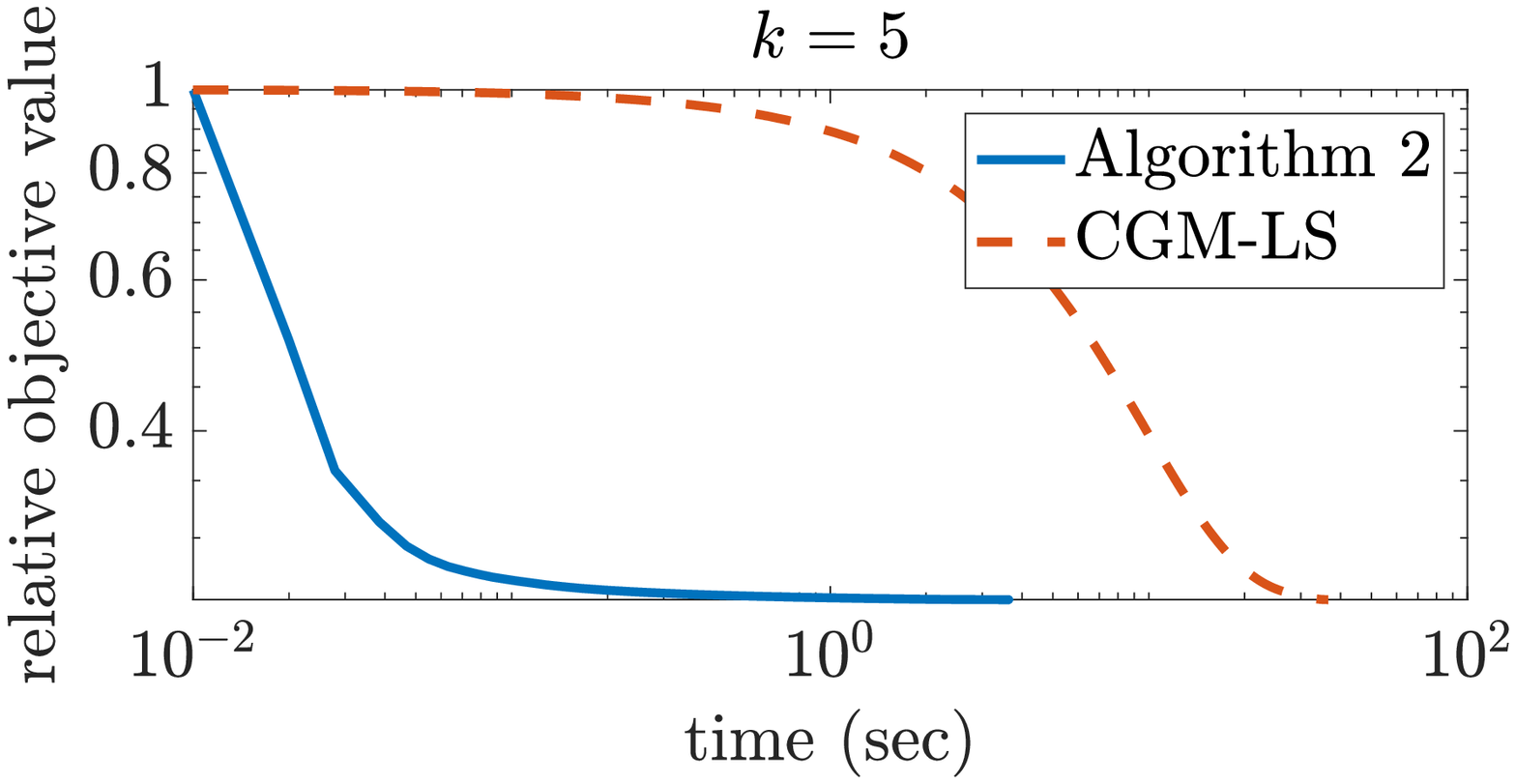}
 \end{center}
 \end{minipage}
 \begin{minipage}{0.48\hsize}
 \begin{center}
  \includegraphics[width=70mm]{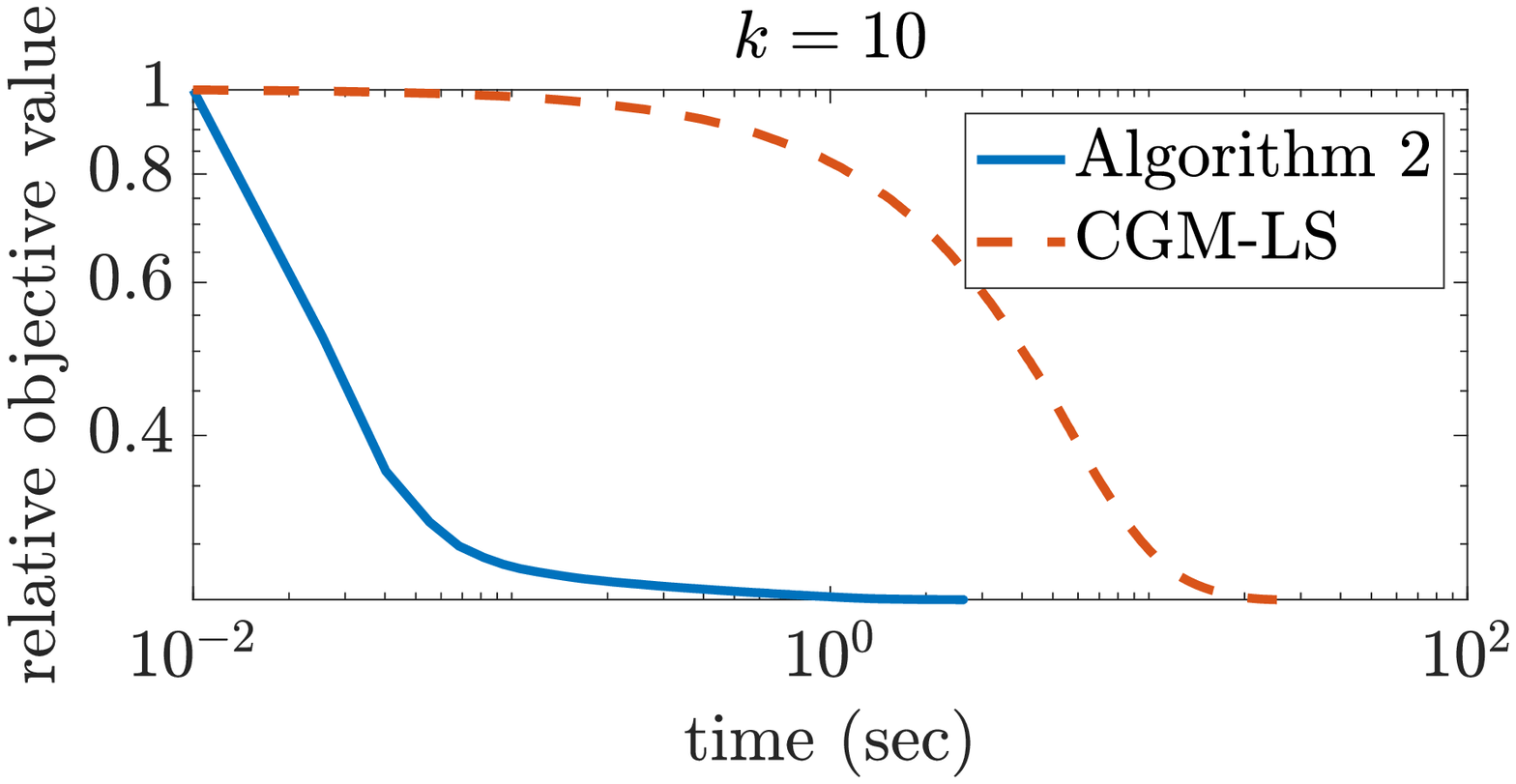}
 \end{center}
 \end{minipage}
 \caption{Numerical results on a single random instance  of problem \eqref{eq:nmf} with $m=n=300$, $\lambda=0.01$, $\alpha = 2$,  and $k=5,10$, respectively. These sub-figures illustrate the behavior of the best relative Frank-Wolfe gap $\delta_t^*/\delta_0 := \min_{0\leq i \leq t}\delta_i/\delta_0$ and the relative function value $\varphi(U^t,V^t)/\varphi(U^0,V^0)$ with respect to CPU time in seconds.}
 \label{fig:num3}
\end{figure}

\section{Proof of the Main Results}
\label{sec:proofs}

In this section, we provide a proof of our main results presented in Sections~\ref{sec:exact-ls} and \ref{sec:main}.

\subsection{Auxiliary Lemmas}
\label{sec:aux-lem}


In this subsection we establish some technical lemmas that will be used subsequently.

\begin{lemma}\label{lem:rec}
Suppose that $\{\beta_t\}$ and $\{\gamma_t\}$ are sequences of nonnegative real numbers such that 
\begin{equation}\label{eq:lem-rec}
\gamma_{t+1} \leq \gamma_t - c \beta_t \min\{1,\beta_t^\alpha/A\},\quad \forall t \geq 0
\end{equation}
for some constants $c \in (0,1)$, $\alpha\geq 0$ and $A>0$.
Then, $\underline{\gamma}=\lim_{t\to \infty}\gamma_t$ exists and the sequence $\beta_t^*=\min_{0\leq i\leq t}\beta_i$ satisfies
$$
\beta_t^*\leq \max\left\{ \frac{\gamma_0-\underline{\gamma}}{c(t+1)},~\left(\frac{A(\gamma_0-\underline{\gamma})}{c(t+1)}\right)^{\frac{1}{1+\alpha}} \right\},\quad \forall t \geq 0.
$$
In particular, we have $\beta_t^*\leq \ep$ whenever
$$
t \geq \frac{\gamma_0-\underline{\gamma}}{c \ep}\max\left\{1,\frac{A}{\ep^{\alpha}}\right\}.
$$
\end{lemma}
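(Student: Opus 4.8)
\textbf{Proof plan for Lemma~\ref{lem:rec}.}
The plan is to first establish that $\{\gamma_t\}$ is convergent, then extract the rate on $\beta_t^*$ by a summation/telescoping argument, and finally derive the explicit iteration bound. Since $\{\gamma_t\}$ is nonnegative and, by \eqref{eq:lem-rec} together with $c>0$, $\beta_t\ge 0$ and $\min\{1,\beta_t^\alpha/A\}\ge 0$, it is non-increasing; hence $\underline\gamma=\lim_{t\to\infty}\gamma_t$ exists. Summing \eqref{eq:lem-rec} over $i=0,\dots,t$ telescopes to
\[
c\sum_{i=0}^{t}\beta_i\min\{1,\beta_i^\alpha/A\}\le \gamma_0-\gamma_{t+1}\le \gamma_0-\underline\gamma.
\]

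Next I would lower-bound the left-hand side using $\beta_t^*$. For each $i\le t$ we have $\beta_i\ge\beta_t^*$, and the map $s\mapsto s\min\{1,s^\alpha/A\}$ is non-decreasing on $[0,\infty)$ (it equals $s^{1+\alpha}/A$ for $s\le A^{1/\alpha}$ and $s$ thereafter, with both pieces non-decreasing, so monotonicity holds — the edge case $\alpha=0$ gives $s\min\{1,1/A\}$, still non-decreasing). Therefore $\beta_i\min\{1,\beta_i^\alpha/A\}\ge \beta_t^*\min\{1,(\beta_t^*)^\alpha/A\}$, and the telescoped inequality yields
\[
c(t+1)\,\beta_t^*\min\{1,(\beta_t^*)^\alpha/A\}\le \gamma_0-\underline\gamma.
\]
Now split into the two cases defining the $\min$. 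If $(\beta_t^*)^\alpha\ge A$, the inequality reads $c(t+1)\beta_t^*\le\gamma_0-\underline\gamma$, so $\beta_t^*\le(\gamma_0-\underline\gamma)/(c(t+1))$. If $(\beta_t^*)^\alpha< A$, it reads $c(t+1)(\beta_t^*)^{1+\alpha}/A\le\gamma_0-\underline\gamma$, so $\beta_t^*\le(A(\gamma_0-\underline\gamma)/(c(t+1)))^{1/(1+\alpha)}$. In either case $\beta_t^*$ is bounded by the maximum of the two expressions, which is the claimed bound.

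Finally, for the last assertion I would observe that $\beta_t^*\le\ep$ follows once each of the two candidate bounds is $\le\ep$: the first requires $t+1\ge(\gamma_0-\underline\gamma)/(c\ep)$, the second requires $t+1\ge A(\gamma_0-\underline\gamma)/(c\ep^{1+\alpha})=\big((\gamma_0-\underline\gamma)/(c\ep)\big)(A/\ep^\alpha)$; taking the larger of the two thresholds gives $t\ge\frac{\gamma_0-\underline\gamma}{c\ep}\max\{1,A/\ep^\alpha\}$, as stated. The only mildly delicate point is the monotonicity of $s\mapsto s\min\{1,s^\alpha/A\}$ and handling $\alpha=0$ uniformly, but this is routine; the rest is straightforward telescoping and case analysis, so I do not anticipate a substantial obstacle.
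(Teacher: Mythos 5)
Your proposal is correct and follows essentially the same route as the paper: both arguments reduce \eqref{eq:lem-rec} to a recurrence in $\beta_t^*$ via the monotonicity of $s\mapsto s\min\{1,s^\alpha/A\}$, telescope, and then split on which branch of the $\min$ is active (you merely telescope before replacing $\beta_i$ by $\beta_t^*$, whereas the paper does the replacement first). Your explicit justification of the monotonicity, including the $\alpha=0$ edge case, is a detail the paper leaves implicit but adds nothing substantively different.
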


\begin{proof}
Since $\{\beta_t\} \subset \RR_+$, the relation  \eqref{eq:lem-rec} implies that $\{\gamma_t\}$ is non-increasing, which together with $\{\gamma_t\} \subset \RR_+$ further implies that $\underline{\gamma}=\lim_{t\to \infty}\gamma_t$ exists. In addition, by 
$\beta_t^*=\min_{0\leq i\leq t}\beta_i$ and \eqref{eq:lem-rec}, one can obtain that
\begin{equation}\label{gamma-delta-rec}
\gamma_{t+1} \leq \gamma_t - c\beta_t^*\min\{1,(\beta_t^*)^\alpha/A\},\quad \forall t \geq 0.
\end{equation}
Summing up these inequalities yields
$$\gamma_{t+1} \leq \gamma_0 - (t+1) c\beta_t^*\min\{1,(\beta_t^*)^\alpha/A\} ,\quad \forall t \geq 0.$$
By this and $\gamma_{t+1} \geq \underline{\gamma}$, we have $\beta_t^*\min\{1,(\beta_t^*)^\alpha/A\} \le(\gamma_0-\underline{\gamma})/(c(t+1))$, which implies the desired assertions.
\end{proof}


\begin{lemma}\label{lem:rec-conv}
Suppose that $\{\beta_t\}$ and $\{\gamma_t\}$ are sequences of nonnegative real numbers such that the recurrence \eqref{eq:lem-rec} holds for some constants $c \in (0,1)$, $\alpha\geq 0$ and $A>0$.
Assume additionally that $\beta_t \geq \gamma_t$ for all $t\geq 0$.
Let $\beta_t^*=\min_{0\leq i\leq t}\beta_i$.
Then the following statements hold.

\begin{enumerate}[(i)]
\item
If $\alpha=0$, then we have $\gamma_t \leq \overline\gamma_t$ for all $t \geq 0$ and $\beta_t^* \leq \overline\gamma_{\lfloor (t+2)/2\rfloor}$ for all $t \geq 2\max\{1,A\}/c$, where
$$
\overline{\gamma}_t = \gamma_0 \exp(-c \min\{1,A^{-1}\}\,t).
$$
Consequently, we have $\beta_t^* \leq \ep$ whenever
$$
t \geq \frac{2}{c}\max\{1,A\}\max\left\{1,\log\frac{\gamma_0}{\ep}\right\}.
$$
\item
If $\alpha>0$, then we have $\gamma_t \leq \overline\gamma_t$ for all $t \geq t_0$ and $\beta_t^* \leq (1+\alpha)^{\frac{1}{1+\alpha}} \overline\gamma_{\lfloor (t+t_0+1)/2 \rfloor} \leq e^{\frac{1}{e}}\overline\gamma_{\lfloor (t+t_0+1)/2 \rfloor}$ for all $t \geq t_0+2A/(c\gamma_{t_0}^\alpha)$,
where
\begin{equation}\label{lem:gamma-and-t0}
\overline{\gamma}_t = \left(\frac{1}{\gamma_{t_0}^{-\alpha}+A^{-1}c\alpha (t-t_0)}\right)^{\frac{1}{\alpha}},
\quad
t_0=\left\lceil \frac{1}{c}\left(\log \frac{\gamma_0}{cA^{1/\alpha}}\right)_+ \right\rceil.
\end{equation}
Consequently, we have $\beta_t^* \leq \ep$ whenever
\begin{equation}\label{eq:itr-compl-conv-sublin}
t \geq t_0+\frac{2A}{c\gamma_{t_0}^\alpha} \max\left\{1,~\frac{1}{\alpha}\left[\left(\frac{e^{\frac{1}{e}} \gamma_{t_0}}{\ep}\right)^\alpha - 1\right]\right\}.
\end{equation}
\end{enumerate}
\end{lemma}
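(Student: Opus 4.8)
The plan is to treat the two cases $\alpha=0$ and $\alpha>0$ in parallel, in each case first establishing the bound $\gamma_t\le\overline\gamma_t$ by a telescoping/induction argument on the recurrence \eqref{eq:lem-rec}, and then converting this into a bound on $\beta_t^*$ by exploiting the hypothesis $\beta_t\ge\gamma_t$. Throughout, I would split the recursion according to which term achieves the minimum in $\min\{1,\beta_t^\alpha/A\}$.

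First I would observe that, since $\beta_t\ge\gamma_t\ge 0$, the recurrence \eqref{eq:lem-rec} gives $\gamma_{t+1}\le\gamma_t - c\gamma_t\min\{1,\gamma_t^\alpha/A\}$, i.e. a self-contained recursion in $\{\gamma_t\}$ alone. In the regime where $\gamma_t^\alpha\ge A$ this reads $\gamma_{t+1}\le(1-c)\gamma_t$, giving geometric decay; since $c\in(0,1)$ and $\gamma_0$ is fixed, after at most $t_0=\lceil\frac1c(\log\frac{\gamma_0}{cA^{1/\alpha}})_+\rceil$ steps (for $\alpha>0$; no such phase is needed for $\alpha=0$ once we note $\min\{1,A^{-1}\}$ handles both subcases uniformly) we enter the regime $\gamma_t^\alpha\le A$, where $\gamma_{t+1}\le\gamma_t(1-c A^{-1}\gamma_t^\alpha)$ for $\alpha>0$, or $\gamma_{t+1}\le(1-cA^{-1})\gamma_t$ for $\alpha=0$. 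For $\alpha=0$ this immediately yields $\gamma_t\le\gamma_0\exp(-c\min\{1,A^{-1}\}t)=\overline\gamma_t$ by induction. For $\alpha>0$, from $\gamma_{t+1}\le\gamma_t-cA^{-1}\gamma_t^{1+\alpha}$ I would pass to reciprocals: using convexity of $x\mapsto x^{-\alpha}$ (equivalently the elementary inequality $(a-b)^{-\alpha}\ge a^{-\alpha}+\alpha a^{-\alpha-1}b$ for $0\le b\le a$), one gets $\gamma_{t+1}^{-\alpha}\ge\gamma_t^{-\alpha}+cA^{-1}\alpha$, and summing from $t_0$ to $t$ gives $\gamma_t^{-\alpha}\ge\gamma_{t_0}^{-\alpha}+cA^{-1}\alpha(t-t_0)$, which is exactly $\gamma_t\le\overline\gamma_t$ as defined in \eqref{lem:gamma-and-t0}.

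Next, to bound $\beta_t^*$ I would invoke Lemma~\ref{lem:rec} (already proved in the excerpt), which gives $\beta_t^*\le\max\{\frac{\gamma_0-\underline\gamma}{c(t+1)},(\frac{A(\gamma_0-\underline\gamma)}{c(t+1)})^{1/(1+\alpha)}\}$; but a cleaner route is to re-run the telescoping argument starting the count at $\lfloor(t+t_0+1)/2\rfloor$ (resp.\ $\lfloor(t+2)/2\rfloor$ for $\alpha=0$) so that $\beta_i\ge\gamma_i\ge\overline\gamma_{\lfloor\cdot\rfloor}$ on the relevant index window. Concretely, summing \eqref{gamma-delta-rec}-type inequalities over the second half of the iterations $\{s,\ldots,t\}$ with $s=\lfloor(t+t_0+1)/2\rfloor$, using $\gamma_{t}\ge\underline\gamma\ge 0$, yields $(t-s+1)\,c\,\beta_t^*\min\{1,(\beta_t^*)^\alpha/A\}\le\gamma_s\le\overline\gamma_s$, and once $t$ is large enough that $\beta_t^*\le\overline\gamma_s$ forces the "$1$" branch off (this is where the threshold $t\ge t_0+2A/(c\gamma_{t_0}^\alpha)$, resp. $t\ge 2\max\{1,A\}/c$, comes from), we solve $(t-s+1)c(\beta_t^*)^{1+\alpha}/A\le\overline\gamma_s$ and plug in $t-s+1\ge(t-t_0+1)/2$ together with the definition of $\overline\gamma_s$ to extract $\beta_t^*\le(1+\alpha)^{1/(1+\alpha)}\overline\gamma_s$; the bound $(1+\alpha)^{1/(1+\alpha)}\le e^{1/e}$ is the elementary maximum of $x\mapsto x^{1/x}$. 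The final "consequently" statements \eqref{eq:itr-compl-conv-sublin} are then pure algebra: set $\overline\gamma_s\le\ep$, solve for $s$, double, and add $t_0$.

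The main obstacle I anticipate is bookkeeping the floor functions and the index shift so that the "second-half" argument is valid — one must check that $s=\lfloor(t+t_0+1)/2\rfloor\ge t_0$ (so that $\gamma_s\le\overline\gamma_s$ applies) and that $t-s+1\ge(t-t_0+1)/2$, and that on the whole window $\{s,\ldots,t\}$ the recursion is in the "$\gamma^\alpha\le A$" branch so that the clean form of \eqref{eq:lem-rec} is available; this forces the lower threshold on $t$. The inequality $(a-b)^{-\alpha}\ge a^{-\alpha}+\alpha a^{-\alpha-1}b$ and the bound $(1+\alpha)^{1/(1+\alpha)}\le e^{1/e}$ are both routine but should be stated explicitly. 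Everything else is telescoping and substitution.
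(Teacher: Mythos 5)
Your proposal follows essentially the same route as the paper's proof: reduce \eqref{eq:lem-rec} to a self-contained recursion in $\{\gamma_t\}$ using $\beta_t\ge\gamma_t$ and the monotonicity of $x\mapsto x\min\{1,x^\alpha/A\}$, run a geometric-decay phase of length $t_0$, solve the resulting recursion $\gamma_{t+1}\le\gamma_t-cA^{-1}\gamma_t^{1+\alpha}$ (your convexity/reciprocal step is exactly the content of the auxiliary lemma the paper cites here), and then sum the recurrence over the second half of the index window $\{s,\ldots,t\}$ with $s=\lfloor(t+t_0+1)/2\rfloor$ to bound $\beta_t^*$; the factor $(1+\alpha)^{1/(1+\alpha)}\le e^{1/e}$ and the thresholds arise the same way, and case (i) is handled identically.

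The one step that does not hold up as written is your justification for discarding the ``$1$'' branch of the minimum in the $\beta_t^*$ bound when $\alpha>0$: the phrase ``once $t$ is large enough that $\beta_t^*\le\overline\gamma_s$ forces the `$1$' branch off'' is circular, since a bound of the form $\beta_t^*\lesssim\overline\gamma_s$ is precisely what you are trying to establish. Your phase-one argument yields only $\gamma_{t_0}\le A^{1/\alpha}$, which does not by itself give $\beta_{t_0}^*\le A^{1/\alpha}$. The paper closes this with a short contradiction argument tied to the definition of $t_0$: if $\beta_{t_0}^*>A^{1/\alpha}$, then the recurrence forces $\gamma_{i+1}\le(1-c)\gamma_i$ for $i\le t_0$, hence $\gamma_{t_0}\le\gamma_0e^{-ct_0}\le cA^{1/\alpha}$, while the same recurrence together with $\gamma_{t_0+1}\ge0$ gives $\beta_{t_0}^*\le c^{-1}\gamma_{t_0}\le A^{1/\alpha}$, a contradiction. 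By monotonicity of $\{\beta_t^*\}$ this makes $c(\beta_t^*)^{1+\alpha}/A\le\gamma_i-\gamma_{i+1}$ available on the whole window for every $t\ge t_0$; the threshold $t\ge t_0+2A/(c\gamma_{t_0}^\alpha)$ is then needed only to bound the correction factor $\theta_k$ by $(1+\alpha)^{1/(1+\alpha)}$, not to switch branches as you suggest. With that one repair (or an equivalent patch, e.g.\ deriving $\beta_t^*\le\overline\gamma_s/(c(t-s+1))\le\gamma_{t_0}\le A^{1/\alpha}$ under the contradiction hypothesis once $c(t-s+1)\ge1$), your argument coincides with the paper's.
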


\begin{proof}
(i) Consider the case $\alpha=0$.
By  \eqref{eq:lem-rec} and $\beta_t \geq \gamma_t \geq 0$ for all $t\geq 0$, one can obtain for all $t\geq 0$ that 
\begin{equation*}
\gamma_{t+1} \leq \gamma_t - c \gamma_t \min\{1,\gamma_t^\alpha/A\} = \gamma_t (1-c\min\{1,A^{-1}\}) \leq \gamma_t\exp(-c\min\{1,A^{-1}\}),
\end{equation*}
and hence $\gamma_t \leq \gamma_0\exp(-c\min\{1,A^{-1}\}t)=\overline\gamma_t$ for all $t \geq 0$.
By this, $\{\gamma_t\}\subset \RR_+$, and \eqref{gamma-delta-rec} with $\alpha=0$, one has that for any $k \leq t$,
\begin{align} 
c\min\{1,A^{-1}\}(t-k+1)\beta_t^* \leq \sum_{i=k}^t (c\min\{1,A^{-1}\}\beta_i^*) \leq \sum_{i=k}^t(\gamma_i-\gamma_{i+1}) = \gamma_k-\gamma_{t+1} \leq  \overline\gamma_k. \label{beta-bnd}
\end{align}
For convenience, let $T_0=2\max\{1,A\}/c$ and 
$T_1=\max\{1,A\}\log(\gamma_0/\ep)/c$. For any $t\geq T_0$, letting $k=\lfloor (t+2)/2\rfloor$ in \eqref{beta-bnd}, we obtain $\beta_t^* \leq \overline\gamma_{\lfloor (t+2)/2\rfloor}$ due to
$$c\min\{1,A^{-1}\}(t-k+1) \geq c\min\{1,A^{-1}\}t/2 = t/T_0 \geq 1.$$
Moreover, since $\overline{\gamma}_{\lfloor (t+2)/2\rfloor} \leq \ep$ holds if $\lfloor (t+2)/2\rfloor \geq T_1$, we have $\beta_t^*\leq \ep$ whenever
$$t\geq \max\{T_0,2T_1\} = \frac{2}{c}\max\{1,A\}\max\left\{1,\log\frac{\gamma_0}{\ep}\right\}.$$
Hence, statement (i) holds.

(ii)
We now consider the case $\alpha>0$.
It follows from the relation $\gamma_t\leq \beta_t$ and the monotonicity of the sequence
$\{\gamma_t\}$ that  $\gamma_t \le \beta^*_t$.
As long as $\beta_t^* > A^{1/\alpha}$, the relation \eqref{gamma-delta-rec} implies
\begin{equation}\label{gamma-rec1}
\gamma_{t+1} \leq \gamma_t - c\beta_t^* 
\le (1-c) \gamma_t \le \gamma_t \exp(-c).
\end{equation}
Claim that $\beta_{t_0}^* \leq A^{1/\alpha}$ for $t_0$ defined in \eqref{lem:gamma-and-t0}. Suppose for contradiction that $\beta_{t_0}^*>A^{1/\alpha}$.
Then, as \eqref{gamma-rec1} holds for $t=0,\ldots,t_0$, we have $\gamma_{t_0} \leq \gamma_0 \exp(-ct_0)$ and thus $\gamma_{t_0} \le cA^{1/\alpha}$ follows by the expression of $t_0$.
However, since $\{\gamma_t\}$ is nonnegative, the first inequality of \eqref{gamma-rec1} implies $\beta_{t_0}^* \leq c^{-1}\gamma_{t_0}\leq A^{1/\alpha}$, which leads to a contradiction.
Hence, $\beta_{t_0}^*\leq A^{1/\alpha}$ holds as claimed.

It follows from the monotonicity of $\{\beta_t^*\}$ that $\gamma_t \le \beta^*_t \le A^{1/\alpha}$ for all $t \ge t_0$. By this and \eqref{gamma-delta-rec}, one can obtain the recurrence 
\begin{equation}\label{gamma-rec2} 
\gamma_{t+1}  \leq \gamma_t - c(\beta_t^*)^{1+\alpha}/A \le \gamma_t - c\gamma_t^{1+\alpha}/A ,\quad \forall t \geq t_0.
\end{equation}
Then, by \cite[Lemma~4.1]{BLY14}, this recurrence implies the assertion
\begin{equation}\label{eq:gamma-ubd-pr}
\gamma_t
\leq (\gamma_{t_0}^{-\alpha}+A^{-1}c\alpha (t-t_0))^{-1/\alpha}
= \overline\gamma_t,  \quad \forall t \ge t_0.
\end{equation}

We next show that  $\beta_t^* \leq (1+\alpha)^{\frac{1}{1+\alpha}} \overline\gamma_{\lfloor (t+t_0+1)/2 \rfloor}$ for all $t \geq t_0+\frac{2A}{c\gamma_{t_0}^\alpha}$.
It follows from the first inequality in \eqref{gamma-rec2} and the monotonicity of 
$\{\beta^*_t\}$ that
\begin{equation}\label{gamma-delta-rec1}
c(\beta_t^*)^{1+\alpha}/A  \leq \gamma_i-\gamma_{i+1},\quad \forall t \ge t_0,~~i \leq t.
\end{equation}
Let $k=\lfloor (t+t_0+1)/2\rfloor$. Observe that $t-k+1 \geq k-t_0$. 
When $t\geq t_0+1$, we have $t \geq k \geq t_0+1$ and thus $\gamma_k \le \overline\gamma_k$ holds from \eqref{eq:gamma-ubd-pr}.
By these relations, $\gamma_{t+1} \ge 0$, and summing up the inequality \eqref{gamma-delta-rec1} for $i=k, \ldots, t$, one has
$$
(k-t_0)c(\beta_t^*)^{1+\alpha}/A \leq (t-k+1)c(\beta_t^*)^{1+\alpha}/A  \leq \gamma_k-\gamma_{t+1} \leq \gamma_k \leq \overline\gamma_k, \quad \forall t \ge t_0+1.
$$
In view of this and the expression of $\overline{\gamma}_t$, we obtain that for all $t \geq t_0+1$,
\begin{equation}\label{eq:b-pr-beta-ubd-gen}
\beta_t^* 
\leq \left(\frac{\overline\gamma_k}{A^{-1}c(k-t_0)}\right)^{\frac{1}{1+\alpha}}
= \left(\frac{\gamma_{t_0}^{-\alpha} + A^{-1}c\alpha(k-t_0)}{A^{-1}c(k-t_0)}\right)^{\frac{1}{1+\alpha}} \overline\gamma_k
=\theta_k\overline{\gamma}_k,
\end{equation}
where $\theta_k = \left(\frac{\gamma_{t_0}^{-\alpha} + A^{-1}c\alpha(k-t_0)}{A^{-1}c(k-t_0)}\right)^{\frac{1}{1+\alpha}}$.
Observe that $\theta_k$ is non-increasing and $\theta_k \downarrow 1$ as $k\to \infty$.
For convenience, let $T_2=t_0+2A/(c\gamma_{t_0}^\alpha)$.
Claim that
$\theta_k \leq (1+\alpha)^{\frac{1}{1+\alpha}}$ whenever $t \geq T_2$. 
Indeed, fix any $t \geq T_2$. 
By this and the expression of $k$, one can observe that 
$
k \geq t_0 + A/(c\gamma_{t_0}^\alpha),
$
which along with the expression of $\theta_k$ implies that $\theta_k \leq (1+\alpha)^{\frac{1}{1+\alpha}}$ holds as claimed.
Using this and \eqref{eq:b-pr-beta-ubd-gen}, we conclude that
$$\beta_t^* \leq (1+\alpha)^{\frac{1}{1+\alpha}}\overline{\gamma}_{\lfloor (t+t_0+1)/2\rfloor} \leq e^{\frac{1}{e}}\overline{\gamma}_{\lfloor (t+t_0+1)/2\rfloor},\quad \forall t \geq 
T_2,
$$
where the second inequality follows from $(1+\alpha)^{\frac{1}{1+\alpha}} \leq \max_{\theta>0}\theta^{-\theta}=e^{\frac{1}{e}}$.

Finally, by the expression of $\overline\gamma_t$, one can see that the relation $e^{\frac{1}{e}}\overline\gamma_{\lfloor (t+t_0+1)/2 \rfloor} \leq \ep$ holds if $t \geq T_3$, where
$$
T_3 = t_0 + \frac{2A}{c\gamma_{t_0}^\alpha \alpha}\left[\left(\frac{e^{\frac{1}{e}} \gamma_{t_0}}{\ep}\right)^\alpha-1\right].
$$
It then follows that $\beta_t^* \leq \ep$ holds whenever $t \geq \max\{T_2,T_3\}$ and hence \eqref{eq:itr-compl-conv-sublin} holds.
This completes the proof of statement (ii).
\end{proof}


The following lemma establishes the weak smoothness of the function $\frac{1}{p}\norm{Ax-b}_p^p$ for $p \in (1,2]$ which has been used in Section \ref{sec:num}.

\begin{lemma}\label{lem:lp-Holder}
For $p \in (1,2]$, $A \in \RR^{m\times n}$ and $b \in \RR^m$, the function $\phi(x)=\frac{1}{p}\norm{Ax-b}_p^p$ satisfies
$$
\norm{\nabla \phi(x) - \nabla \phi(y)}_2 \leq M_{p-1} \norm{x-y}_2^{p-1},
\quad \forall x,y \in \RR^n,
$$
where $M_{p-1}=2^{2-p} m^{\frac{(p-1)(2-p)}{2p}} \norm{A}_2^p$ and $\norm{A}_2=\max_{\norm{x}_2 \leq 1}\norm{Ax}_2$.
\end{lemma}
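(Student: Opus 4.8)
The plan is to combine the chain rule with a single elementary H\"older estimate for a scalar power map and with standard norm-comparison inequalities on $\RR^m$.

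First I would record the gradient. For $p>1$ the map $t\mapsto\frac1p|t|^p$ is continuously differentiable on $\RR$ with derivative $|t|^{p-1}\sign(t)$, so $\phi$ is differentiable on $\RR^n$ with $\nabla\phi(x)=A^\top\psi(Ax-b)$, where $\psi:\RR^m\to\RR^m$ acts coordinatewise by $\psi(z)_i=|z_i|^{p-1}\sign(z_i)$; equivalently $\psi=\nabla\bigl(\tfrac1p\norm{\cdot}_p^p\bigr)$. Writing $u=Ax-b$ and $v=Ay-b$, so that $u-v=A(x-y)$, we have $\nabla\phi(x)-\nabla\phi(y)=A^\top\bigl(\psi(u)-\psi(v)\bigr)$, and the whole statement reduces to bounding $\norm{A^\top(\psi(u)-\psi(v))}_2$.

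The key ingredient is the scalar inequality
\[
\bigl|\,|s|^{p-1}\sign(s)-|t|^{p-1}\sign(t)\,\bigr|\ \le\ 2^{2-p}\,|s-t|^{p-1},\qquad \forall\,s,t\in\RR,
\]
valid for $p\in(1,2]$, which I would prove by cases. When $s$ and $t$ share a sign, the inequality reduces after normalizing to the subadditivity $a^{p-1}-b^{p-1}\le(a-b)^{p-1}$ for $a\ge b\ge0$, which holds since $0<p-1\le1$, and there even the constant $1\le2^{2-p}$ suffices. When $s$ and $t$ have opposite signs the left-hand side equals $|s|^{p-1}+|t|^{p-1}$ while $|s-t|=|s|+|t|$, and maximizing $a\mapsto a^{p-1}+(1-a)^{p-1}$ over $[0,1]$, with maximum at $a=\tfrac12$, gives $a^{p-1}+b^{p-1}\le2^{2-p}(a+b)^{p-1}$. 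Raising the coordinatewise inequality to the power $q:=p/(p-1)$, summing over the $m$ coordinates, and using $(p-1)q=p$, I obtain $\norm{\psi(u)-\psi(v)}_q\le 2^{2-p}\,\norm{u-v}_p^{\,p-1}$.

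Finally I would transfer everything back to $\ell_2$-norms and push $A$ through. By duality of $\ell_p$ and $\ell_q$ one has $\norm{A^\top}_{q\to2}=\norm{A}_{2\to p}$, and since $p\le2$ the comparison $\norm{w}_p\le m^{1/p-1/2}\norm{w}_2$ holds on $\RR^m$, whence $\norm{A}_{2\to p}\le m^{1/p-1/2}\norm{A}_2$ and likewise $\norm{u-v}_p=\norm{A(x-y)}_p\le m^{1/p-1/2}\norm{A}_2\,\norm{x-y}_2$. Chaining these bounds gives
\[
\norm{\nabla\phi(x)-\nabla\phi(y)}_2\ \le\ \norm{A^\top}_{q\to2}\cdot2^{2-p}\,\norm{u-v}_p^{\,p-1}\ \le\ 2^{2-p}\bigl(m^{1/p-1/2}\norm{A}_2\bigr)^{p}\,\norm{x-y}_2^{\,p-1},
\]
an estimate of the asserted form once the exponent of $m$ is simplified; as a sanity check, at $p=2$ it collapses to the Lipschitz constant $\norm{A}_2^2$ of $\nabla\bigl(\tfrac12\norm{Ax-b}_2^2\bigr)$. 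The genuinely delicate point is not any single step but the bookkeeping of the dimension-dependent factor: one must route the norm comparisons so that the power of $m$ comes out exactly as written in the statement, since a naive routing yields a larger power of $m$.
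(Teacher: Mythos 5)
Your argument is correct as far as it goes, and it follows the same skeleton as the paper's proof: the scalar estimate $\bigl|\,|s|^{p-1}\sign(s)-|t|^{p-1}\sign(t)\,\bigr|\le 2^{2-p}|s-t|^{p-1}$ (which the paper proves via the same two inequalities $|\alpha^{p-1}-\beta^{p-1}|\le|\alpha-\beta|^{p-1}$ and $\alpha^{p-1}+\beta^{p-1}\le 2^{2-p}(\alpha+\beta)^{p-1}$), followed by summation over coordinates and norm comparisons to push $A$ and $A^\top$ through. The one substantive difference is the norm in which the gradient difference of $h(z)=\frac1p\norm{z}_p^p$ is measured. You correctly raise the scalar bound to the power $q=p/(p-1)$ and obtain $\norm{\nabla h(u)-\nabla h(v)}_q\le 2^{2-p}\norm{u-v}_p^{p-1}$, whereas the paper asserts the same bound with $\norm{\cdot}_p$ on the left-hand side. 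That version is false: taking $u$ to be the all-ones vector in $\RR^m$ and $v=0$ gives $\norm{\nabla h(u)-\nabla h(v)}_p=m^{1/p}$ while $2^{2-p}\norm{u-v}_p^{p-1}=2^{2-p}m^{(p-1)/p}$, and $m^{1/p}>2^{2-p}m^{(p-1)/p}$ once $p<2$ and $m$ is large.

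The consequence is that the ``delicate bookkeeping'' you defer at the end cannot be carried out: the exponent $(2-p)/2$ of $m$ produced by your routing cannot be improved to the stated $(p-1)(2-p)/(2p)$, because the lemma as stated is false for $p<2$ and large $m$. Indeed, with $A=I_m$, $b=0$, $x$ the all-ones vector and $y=0$, one has $\norm{\nabla\phi(x)-\nabla\phi(y)}_2=m^{1/2}$, while the claimed bound equals $2^{2-p}m^{(p-1)(2-p)/(2p)}\cdot m^{(p-1)/2}=2^{2-p}m^{(p-1)/p}$, and $1/2>(p-1)/p$ for $p<2$. The same example shows that the power $(2-p)/2$ in your constant $2^{2-p}m^{(2-p)/2}\norm{A}_2^p$ is sharp. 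So the correct course is to state and prove the lemma with $m^{(2-p)/2}$ in place of $m^{(p-1)(2-p)/(2p)}$ (the two agree at $p=2$, where both reduce to $\norm{A}_2^2$). This does not affect the qualitative conclusion that $\phi$ is $(p-1,M)$-weakly smooth, which is all the convergence theory requires, but it does mean the explicit $M_\nu$ fed into Algorithm~\ref{alg:FW-exact-ls} in Section~\ref{lq-ball} underestimates the true H\"older constant.
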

\begin{proof}
We first consider the univariate function $g(\tau)=\frac{1}{p}|\tau|^p$ for $\tau \in \RR$.
Its derivative is given by $g'(\tau)=|\tau|^{p-1}\sign(\tau)$. Claim that 
\begin{equation} \label{g-prop}
 |g'(\tau)-g'(\tau')| \leq 2^{2-p} |\tau-\tau'|^{p-1}, \quad \forall \tau, \tau' \in \RR.   
\end{equation}
Indeed, it suffices to show that
$$
|\alpha^{p-1}-\beta^{p-1}| \leq  |\alpha-\beta|^{p-1},\quad
\alpha^{p-1}+\beta^{p-1} \leq 2^{2-p} (\alpha+\beta)^{p-1}
$$
for every $\alpha,\beta \geq 0$. The first inequality follows from the fact $(x+y)^{p-1} \leq x^{p-1} + y^{p-1}$ for $x,y \geq 0$ and the second one holds due to the concavity property $[(\alpha+\beta)/2]^{p-1} \geq (\alpha^{p-1}+\beta^{p-1})/2$. Hence, \eqref{g-prop} holds as claimed.

Let $h(z)=\frac{1}{p}\norm{z}_p^p$ for any $z \in \RR^m$. Notice that $h(z)=\sum_{i=1}^mg(z_i)$, which together with \eqref{g-prop} implies that $\norm{\nabla h(x)-\nabla h(y)}_p \leq 2^{2-p}\norm{x-y}_p^{p-1}$. 
Also, observe that $\phi(x)=h(Ax-b)$. Using these and $\norm{z}_2 \leq \norm{z}_p \leq m^{1/p-1/2}\norm{z}_2$ for any $z\in \RR^m$, we obtain that
\begin{align*}
\norm{\nabla \phi(x)-\nabla \phi(y)}_2 &\leq \norm{A^T}_2 \norm{\nabla h(Ax-b)-\nabla h(Ay-b)}_2 \leq \norm{A}_2 \norm{\nabla h(Ax-b)- \nabla h(Ay-b)}_p\\
& \leq 2^{2-p}\norm{A}_2 \norm{A(x-y)}_p^{p-1} \leq 2^{2-p} m^{(p-1)(1/p-1/2)} \norm{A}_2 \norm{A(x-y)}_2^{p-1}\\
& \leq 2^{2-p} m^{(p-1)(2-p)/(2p)}\norm{A}_2^p \norm{x-y}_2^{p-1}. 
\end{align*}
Hence, the conclusion holds as desired.
\end{proof}

\subsection{Proof of the Main Results in Section~\ref{sec:exact-ls}}
\label{sec:proof-1}

In this subsection, we prove Theorems~\ref{rate-exact-ls-1} and \ref{rate-exact-ls-2}.
Before proceeding, we establish a descent property for the sequence $\{\varphi(x_t)\}$.

\begin{lemma}\label{lem:alg1-rec}
Let the sequences $\{x_t\}$, $\{\delta_t\}$ and $\{v_t\}$ be generated in Algorithm~\ref{alg:FW-exact-ls}. Suppose that Assumption~\ref{assump:fg} holds and that $\delta_t> 0$ for all $t \ge 0$.\footnote{If $\delta_t=0$ for some $t \ge 0$, $x_t$ is already a stationary point of problem \eqref{main-prob} and Algorithm~\ref{alg:FW-exact-ls} shall be terminated.}
Then we have 
\begin{equation}\label{exact-ls-bound-1}
\varphi(x_{t+1}) \leq \varphi(x_t) - \frac{\nu}{1+\nu} \delta_t\min\left\{1, \left(\frac{\delta_t}{M_\nu \norm{x_t-v_t}^{1+\nu}}\right)^{\frac{1}{\nu}}\right\}, \quad \forall t \ge 0. 
\end{equation}
\end{lemma}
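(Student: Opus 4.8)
The plan is to derive \eqref{exact-ls-bound-1} from the standard descent inequality implied by the H\"older continuity of $\nabla f$, combined with the convexity of $g$ and the explicit form of $\tau_t$ in Algorithm~\ref{alg:FW-exact-ls}. First I would record that \eqref{assump:Holder} yields, for all $x,y\in\dom g$,
\[
f(y)\le f(x)+\innprod{\nabla f(x)}{y-x}+\frac{M_\nu}{1+\nu}\norm{y-x}^{1+\nu},
\]
a well-known consequence of integrating the H\"older bound for $\nabla f$ along the segment $[x,y]$. Applying this with $x=x_t$ and $y=x_{t+1}=(1-\tau_t)x_t+\tau_t v_t$, so that $y-x=\tau_t(v_t-x_t)$, and adding the inequality $g(x_{t+1})\le(1-\tau_t)g(x_t)+\tau_t g(v_t)$ coming from convexity of $g$, I obtain
\[
\varphi(x_{t+1})\le\varphi(x_t)-\tau_t\big(\innprod{\nabla f(x_t)}{x_t-v_t}+g(x_t)-g(v_t)\big)+\frac{M_\nu}{1+\nu}\tau_t^{1+\nu}\norm{x_t-v_t}^{1+\nu}.
\]
By the definition of $\delta_t$ in Algorithm~\ref{alg:FW-exact-ls}, the bracketed quantity is exactly $\delta_t$, so $\varphi(x_{t+1})\le h_t(\tau_t)$ with $h_t(\tau)=\varphi(x_t)-\tau\delta_t+\tau^{1+\nu}\frac{M_\nu}{1+\nu}\norm{x_t-v_t}^{1+\nu}$.

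The next step is to minimize $h_t$ over $[0,1]$. Since $\delta_t>0$ forces $x_t\ne v_t$ (otherwise $\delta_t=0$), $h_t$ is strictly convex on $\RR_+$ with unconstrained minimizer $\tau_t^\star=(\delta_t/(M_\nu\norm{x_t-v_t}^{1+\nu}))^{1/\nu}$, hence its minimizer over $[0,1]$ is $\min\{1,\tau_t^\star\}$ — precisely the step size chosen by the algorithm. A short two-case argument then finishes the proof: if $\tau_t^\star\le 1$, then $\tau_t=\tau_t^\star$ and the identity $M_\nu(\tau_t^\star)^\nu\norm{x_t-v_t}^{1+\nu}=\delta_t$ gives $h_t(\tau_t^\star)-\varphi(x_t)=-\frac{\nu}{1+\nu}\tau_t^\star\delta_t$; if $\tau_t^\star>1$, then $\tau_t=1$ and $\delta_t\ge M_\nu\norm{x_t-v_t}^{1+\nu}$ gives $h_t(1)-\varphi(x_t)\le-\delta_t+\frac{\delta_t}{1+\nu}=-\frac{\nu}{1+\nu}\delta_t$. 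In both cases the right-hand side is $-\frac{\nu}{1+\nu}\delta_t\min\{1,\tau_t^\star\}$, which is exactly \eqref{exact-ls-bound-1}.

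I do not expect any serious obstacle: once the H\"older descent inequality is in hand, the remainder is elementary bookkeeping. The only point requiring a moment's care is the well-definedness of $\tau_t$ (the division by $\norm{x_t-v_t}^{1+\nu}$), which is ensured by the standing hypothesis $\delta_t>0$, and the uniform packaging of the two cases so that the estimate is written with the single factor $\min\{1,\tau_t^\star\}$.
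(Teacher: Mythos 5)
Your proposal is correct and follows essentially the same route as the paper: the H\"older descent inequality combined with convexity of $g$ yields $\varphi(x_{t+1})\le \varphi(x_t)-\tau_t\delta_t+\tau_t^{1+\nu}\frac{M_\nu}{1+\nu}\norm{x_t-v_t}^{1+\nu}$, after which substituting the algorithm's step size gives \eqref{exact-ls-bound-1}. The only difference is that you spell out the two-case evaluation of the minimizer of $h_t$ over $[0,1]$ (and the well-definedness remark that $\delta_t>0$ forces $x_t\neq v_t$), which the paper leaves implicit in the phrase ``using the expression of $\tau_t$.''
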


\begin{proof}
By the H\"older continuity of $\nabla{f}$ (see \eqref{assump:Holder}), we have
\begin{equation}\label{eq:Holder}
f(y) \leq f(x)+\innprod{\nabla{f}(x)}{y-x} + \frac{M_\nu}{1+\nu}\norm{x-y}^{1+\nu},\quad \forall x,y \in \dom g.
\end{equation}
 By the convexity of $g$, and \eqref{eq:Holder} with $y=(1-\tau)x_t+\tau v_t$ and $x=x_t$, one can obtain that for any $\tau \in [0,1]$,
\begin{align}
& \varphi((1-\tau)x_t + \tau v_t) \nonumber\\
& \leq 
f(x_t) + \innprod{\nabla{f}(x_t)}{(1-\tau)x_t+\tau v_t -x_t} + \frac{M_\nu}{1+\nu}\norm{(1-\tau)x_t+\tau v_t -x_t}^{1+\nu}\nonumber \\
& \quad  + g((1-\tau)x_t+\tau v_t)
 \nonumber\\
& \leq f(x_t) -\tau\innprod{\nabla{f}(x_t)}{x_t-v_t} + \tau^{1+\nu}\frac{M_\nu}{1+\nu}\norm{x_t-v_t}^{1+\nu}
+(1-\tau)g(x_t) + \tau g(v_t) \nonumber\\
& = \varphi(x_t) -\tau \delta_t + \tau^{1+\nu} \frac{M_\nu}{1+\nu}\norm{x_t-v_t}^{1+\nu}. \label{eq:step size-Holder-bound}
\end{align}
Letting $\tau=\tau_t$ in \eqref{eq:step size-Holder-bound}, and using the expression of $\tau_t$ and $x_{t+1}$, we obtain that for any $t \ge 0$, 
\begin{align*}
\varphi(x_{t+1})  &\leq \varphi(x_t) - \tau_t\delta_t + \tau^{1+\nu}_t \frac{M_\nu}{1+\nu}\norm{x_t-v_t}^{1+\nu} \\
& \leq \varphi(x_t) - \frac{\nu}{1+\nu} \delta_t\min\left\{1, \left(\frac{\delta_t}{M_\nu \norm{x_t-v_t}^{1+\nu}}\right)^{\frac{1}{\nu}}\right\}.
\end{align*}
\end{proof}

We are now ready to prove Theorems~\ref{rate-exact-ls-1} and \ref{rate-exact-ls-2}.

\vskip 0.1in
\noindent
\textbf{Proof of Theorem~\ref{rate-exact-ls-1}}~~
 Let  the sequences $\{x_t\}$ and $\{v_t\}$ be generated in Algorithm~\ref{alg:FW-exact-ls}. One can observe that $x_t, v_t \in \dom{g}$ for all $t\geq 0$. It then follows that $\|x_t-v_t\| \le D_g$. By this and 
Lemma~\ref{lem:alg1-rec},  one can obtain 
\begin{equation}\label{exact-ls-bound-2}
\varphi(x_{t+1})  \leq \varphi(x_t)  - \frac{\nu}{1+\nu}\delta_t \min\left\{1, \left(\frac{\delta_t}{M_\nu D_g^{1+\nu}}\right)^{\frac{1}{\nu}}\right\}, \quad \forall t \ge 0.
\end{equation}

(i) It follows from \eqref{exact-ls-bound-2} that $\{\varphi(x_t)\}$ is non-increasing, which, together with the fact that $\varphi(x_t) \ge \varphi^*$ for all $t \ge 0$, implies that $\varphi_* =\lim_{t\to \infty}\varphi(x_t)$ exists. In addition,  one can observe from  
\eqref{exact-ls-bound-2} that the recurrence \eqref{eq:lem-rec} holds for $\beta_t=\delta_t$, $\gamma_t=\varphi(x_t)-\varphi_*$, 
$\alpha=1/\nu$, $c=\nu/(1+\nu)$, and $A=M_\nu^{\frac{1}{\nu}}D_g^{\frac{1+\nu}{\nu}}$. The inequality \eqref{delta*-uppbnd-1} then directly follows from Lemma~\ref{lem:rec}.

(ii) One can observe from \eqref{exact-ls-bound-2} that the recurrence \eqref{eq:lem-rec} holds for $\beta_t=\delta_t$, $\gamma_t=\varphi(x_t)-\varphi^*$,  
$\alpha=1/\nu$, $c=\nu/(1+\nu)$, and $A=M_\nu^{\frac{1}{\nu}}D_g^{\frac{1+\nu}{\nu}}$.
In addition, $\beta_t \ge \gamma_t$ due to Lemma~\ref{lem:FW-gap-bound-convex}. The conclusion of this statement then immediately follows from Lemma~\ref{lem:rec-conv}\,(ii).
\hfill$\blacksquare$

\vskip 0.1in
\noindent
\textbf{Proof of Theorem~\ref{rate-exact-ls-2}}~~
Let  the sequences $\{x_t\}$ and $\{v_t\}$ be generated in Algorithm~\ref{alg:FW-exact-ls}. One can observe that $x_t, v_t \in \dom{g}$ for all $t\geq 0$. By this, the expression of $\delta_t$, and Assumption~\ref{assump:g}, one has
\begin{equation} \label{delta_t-lwrbnd}
\delta_t = \innprod{\nabla{f}(x_t)}{x_t}+g(x_t) - \innprod{\nabla{f}(x_t)}{v_t}-g(v_t) \geq \frac{\kappa}{\rho} \norm{x_t-v_t}^\rho, \quad \forall t \ge 0.
\end{equation}
Using this inequality, we can obtain that
$$
\frac{\delta_t}{M_\nu \norm{x_t-v_t}^{1+\nu}} = 
\frac{1}{M_\nu}\left(\frac{\delta_t}{\norm{x_t-v_t}^\rho}\right)^{\frac{1+\nu}{\rho}}
\delta_t^{1-\frac{1+\nu}{\rho}}
\geq
\frac{1}{M_\nu}\left(\frac{\kappa}{\rho}\right)^{\frac{1+\nu}{\rho}}\delta_t^{1-\frac{1+\nu}{\rho}},
$$
which together with \eqref{exact-ls-bound-1} yields
\begin{equation}\label{inexact-ls-bound}
\varphi(x_{t+1}) \leq \varphi(x_t)  - \frac{\nu}{1+\nu}\delta_t \min\left\{1, \left(\frac{\kappa^{\frac{1+\nu}{\rho}}\delta_t^{1-\frac{1+\nu}{\rho}}}{\rho^{\frac{1+\nu}{\rho}}M_\nu}\right)^{\frac{1}{\nu}}\right\}, \quad \forall t \ge 0.
\end{equation}

(i) By \eqref{inexact-ls-bound} and a similar argument as in the proof of Theorem~\ref{rate-exact-ls-1} (i),  one can see that $\{\varphi(x_t)\}$ is non-increasing and $\varphi_* =\lim_{t\to \infty}\varphi(x_t)$ exists. In addition,  one can observe from  
\eqref{inexact-ls-bound} that the recurrence \eqref{eq:lem-rec} holds for $\beta_t=\delta_t$, $\gamma_t=\varphi(x_t)-\varphi_*$, 
$\alpha=(\rho-1-\nu)/(\rho\nu)$, $c=\nu/(1+\nu)$, and $A=\left(\frac{\rho}{\kappa}\right)^{\frac{1+\nu}{\rho\nu}}M_\nu^{\frac{1}{\nu}}$. The inequality \eqref{delta*-uppbnd-2} then directly follows from Lemma~\ref{lem:rec}.

(ii) Let $\alpha=(\rho-1-\nu)/(\rho\nu)$ and $c=\nu/(1+\nu)$. One can observe from  
\eqref{inexact-ls-bound} that the recurrence \eqref{eq:lem-rec} holds for such $\alpha$, $c$, $\beta_t=\delta_t$, $\gamma_t=\varphi(x_t)-\varphi^*$, and $A=\left(\frac{\rho}{\kappa}\right)^{\frac{1+\nu}{\rho\nu}}M_\nu^{\frac{1}{\nu}}$. Also, $\beta_t \ge \gamma_t$ due to Lemma~\ref{lem:FW-gap-bound-convex}. In addition, by $\nu\in (0,1]$, $\rho \ge 2$, and the expression of $\alpha$, it is not hard to see that $\alpha=0$ if and only if $\nu=1$ and $\rho=2$. Also, one can see from the expression of $c$ and $A$ that $c=1/2$ and $A=2M_1/\kappa$ when $\nu=1$ and $\rho=2$. The conclusion of this statement then follows from these observations and  Lemma~\ref{lem:rec-conv}.
\hfill$\blacksquare$

\subsection{Proof of the Main Results in Section~\ref{sec:main}}
\label{sec:proof-2}

In this subsection, we prove Theorems~\ref{lem:L-est}, \ref{th:main1}, and \ref{th:main2}.


\vskip 0.1in
\noindent
\textbf{Proof of Theorem~\ref{lem:L-est}}~~
(i)
For any $\tau \in [0,1]$, by the convexity of $g$, the expression of $\delta_t$, and  \eqref{eq:Holder-approx} with $x=x_t$, 
$y=(1-\tau)x_t + \tau v_t$ and $\ep=\tau\delta_t/2$, one has 
\begin{align*}
& \varphi((1-\tau)x_t + \tau v_t)=f((1-\tau)x_t + \tau v_t)+g((1-\tau)x_t + \tau v_t)\\
& \leq 
f(x_t) + \innprod{\nabla{f}(x_t)}{(1-\tau)x_t+\tau v_t -x_t} + \frac{L(\tau\delta_t/2)}{2}\norm{(1-\tau)x_t+\tau v_t -x_t}^2 + \frac{\tau\delta_t}{2}\\
& \quad + g((1-\tau)x_t+\tau v_t)
\\
& \leq f(x_t) -\tau\innprod{\nabla{f}(x_t)}{x_t-v_t} + \tau^2\frac{L(\tau\delta_t/2)}{2}\norm{x_t-v_t}^2
+(1-\tau)g(x_t) + \tau g(v_t) + \frac{\tau\delta_t}{2} \\
& = \varphi(x_t) -\frac{1}{2} \tau\delta_t + \tau^2 \frac{L(\tau\delta_t/2)}{2}\norm{x_t-v_t}^{2}.
\end{align*}
Letting $\tau=\tau_t^{(i)}$ in this inequality yields
$$
\varphi(x_{t+1}^{(i)}) \leq \varphi(x_t) -\frac{1}{2} \tau_t^{(i)}\delta_t + (\tau_t^{(i)})^2 \frac{L(\tau_t^{(i)}\delta_t/2)}{2}\norm{x_t-v_t}^{2}.
$$
Hence, \eqref{phi-reduct} holds if
\begin{equation}\label{eq:ls-suff}
L_t^{(i)} \geq L(\tau_t^{(i)}\delta_t/2)=\max\left\{L(\delta_t/2), L\left(\frac{\delta_t^2}{4L_t^{(i)}\norm{x_t-v_t}^2}\right)\right\},
\end{equation}
where the equality follows from the expression of $\tau_t^{(i)}$ and the fact that $L(\cdot)$ is non-increasing. By \eqref{eq:L-ep}, one can verify that 
\[
L_t^{(i)} \geq L\left(\frac{\delta_t^2}{4L_t^{(i)}\norm{x_t-v_t}^2}\right) ~~\Longleftrightarrow ~~
L_t^{(i)} \geq L\left(\frac{\delta_t^2}{4\norm{x_t-v_t}^2}\right)^{\frac{1+\nu}{2\nu}},
\]
which together with \eqref{eq:ls-suff} implies that 
\[
L_t^{(i)} \geq L(\tau_t^{(i)}\delta_t/2) ~~\Longleftrightarrow ~~ L_t^{(i)} \geq \max\left\{L(\delta_t/2), L\left(\frac{\delta_t^2}{4\norm{x_t-v_t}^2}\right)^{\frac{1+\nu}{2\nu}}\right\} = \widetilde{L}_t.
\]
Hence, \eqref{phi-reduct} holds if $L_t^{(i)}\geq \widetilde{L}_t$.

(ii) In the $t$-th outer iteration, let $i_t \geq 0$ denote the final iteration counter for the adaptive line search loop, and let $\widetilde{L}_t^*=\max_{0\leq i\leq t}\widetilde{L}_i$.
For $t\geq 0$ and $s\in\{0,\ldots,t\}$, we first show that
\begin{equation}\label{eq:L-bound-renew}
L_{s-1} \leq 2\widetilde{L}_t^* ~~\Longrightarrow~~L_s \leq 2\widetilde{L}_t^*.
\end{equation}
Indeed, if $i_s=0$, one can observe that $L_s = L_{s-1}/2$, which immediately implies that \eqref{eq:L-bound-renew}  holds. Now we suppose $i_s>0$. 
It then follows that the adaptive line search loop fails to terminate at the inner iteration $i_s-1$, which along with statement (i) implies that $L_s/2=L_s^{(i_s-1)}< \widetilde{L}_s$. 
It then follows  that $L_s< 2\widetilde{L}_s \le 2\widetilde{L}_t^*$. Hence, \eqref{eq:L-bound-renew} holds as desired. By these arguments, one can also observe that $L_t = L_{t-1}/2$ whenever $t \not\in \cT$, where
$$\cT = \{t \in \ZZ_+: L_{t-1} \leq 2 \widetilde{L}_t^*\}.$$
Due to this observation and the fact that $\widetilde{L}_t^*\ge \widetilde{L}_0>0$ for all $t \ge 0$, the set $\cT$ must be nonempty. Then, $\tilde t_0=\min\{t: t\in \cT\}$ is well-defined.
Since $t \in \cT$ implies $t+1 \in \cT$ due to \eqref{eq:L-bound-renew}, we see that $\cT=\{\tilde t_0,\tilde t_0+1,\ldots\}$. In addition, \eqref{eq:L-bound-renew} implies $\cT \subset \{t\in \ZZ_+: L_t \leq 2 \widetilde{L}_t^*\}$. Hence, we obtain that
$$
L_t \leq 2\widetilde{L}_t^*,\quad \forall t \geq \tilde{t}_0.
$$
To complete the proof, it suffices to show $\tilde t_0 \leq (\log_2( L_{-1}/\widetilde{L}_0))_+$. Indeed, it holds trivially if $\tilde t_0=0$. Now we suppose $\tilde t_0>0$. Recall that $L_t = L_{t-1}/2$ whenever $t \not\in \cT$. It then follows that $L_t = L_{t-1}/2$ for $t = 0,\ldots,\tilde t_0-1$. 
Hence, we have 
\[
L_{-1}/2^{\tilde t_0-1} = L_{\tilde t_0-2} > 2\widetilde{L}_{\tilde t_0-1}^* \geq 2\widetilde{L}_0,
\] 
which yields $\tilde t_0 \leq \log_2 (L_{-1}/\widetilde{L}_0)$. Thus, $\tilde t_0 \leq (\log_2( L_{-1}/\widetilde{L}_0))_+$ holds as desired. 

(iii) We first show $\widetilde{L}_t \leq \overline{L}(\delta_t)$. Indeed, by \eqref{eq:L-ep} and the expression of $\widetilde{L}_t$, one has
\begin{equation}\label{eq:L_t}
\widetilde{L}_t  = \max\left\{\left(\frac{1-\nu}{1+\nu}\frac{1}{\delta_t}\right)^{\frac{1-\nu}{1+\nu}}M_\nu^{\frac{2}{1+\nu}}, \left(\frac{2(1-\nu)}{1+\nu}\right)^{\frac{1-\nu}{2\nu}}\left(\frac{\norm{x_t-v_t}}{\delta_t}\right)^{\frac{1-\nu}{\nu}}M_\nu^{\frac{1}{\nu}}\right\}.
\end{equation}
 Since $x_t, v_t \in \dom{g}$, one can observe that $\norm{x_t-v_t}/\delta_t \leq D_g/\delta_t$ if $\dom{g}$ is bounded. Also, if Assumption~\ref{assump:g} holds, it follows from \eqref{delta_t-lwrbnd} that 
\[
\frac{\norm{x_t-v_t}}{\delta_t} = \left(\frac{\norm{x_t-v_t}^\rho}{\delta_t}\right)^{\frac{1}{\rho}}\delta_t^{\frac{1}{\rho}-1} \leq \left(\frac{\rho}{\kappa}\right)^{\frac{1}{\rho}}\delta_t^{\frac{1}{\rho}-1}.
\]
Then, $\widetilde{L}_t \leq \overline{L}(\delta_t)$ holds due to \eqref{eq:L_t} and the last two 
inequalities. By this, $\min_{0\leq i\leq t}\delta_i\geq \ep$, and the fact that $\overline{L}(\cdot)$ is non-increasing, we obtain that
$$
\widetilde{L}_t^* = \max_{0\leq i\leq t}\widetilde{L}_i \leq \max_{0\leq i\leq t}\overline{L}(\delta_i) \leq \overline{L}(\ep) .
$$
In addition, from the proof of statement (ii), we can observe that $L_t \le \max\{L_{-1}/2,2\widetilde{L}_t^*\}$ for all $t \ge 0$.
Also, by the definition of $i_s$, one can see that $L_s = 2^{i_s - 1}L_{s-1}$ for $s \ge 0$, and 
the total number of inner loops performed by the adaptive line search procedure until the $t$-th  iteration of Algorithm~\ref{alg:main}
is given by $\sum_{s=0}^t(1+i_s)$. By these observations, one can have
\begin{align*}
\sum_{s=0}^t(1+i_s) &= \sum_{s=0}^t \left(2+\log_2 \frac{L_s}{L_{s-1}}\right) = 
2(t+1)+\log_2 \frac{L_t}{L_{-1}} \\
& \leq 2(t+1)+\log_2 \frac{\max\{L_{-1}/2,2\widetilde{L}_t^*\}}{L_{-1}}
\leq 2t+2+[\log_2(2\overline{L}(\ep)/L_{-1})]_+,
\end{align*}
and hence the conclusion holds.
\hfill$\blacksquare$

Before proving Theorems~\ref{th:main1} and \ref{th:main2}, we establish a lemma that will be used shortly.

\begin{lemma} \label{lem:alg2-rec}
Let the sequences $\{x_t\}$, $\{\delta_t\}$ and $\{v_t\}$ be generated in Algorithm~\ref{alg:main}. Suppose that Assumption~\ref{assump:fg} holds and that $\delta_t>0$ for all $t \ge 0$. Let $\tilde t_0=\lceil(\log_2 (L_{-1}/\widetilde{L}_0))_+\rceil$, $\delta_t^* = \min_{0\leq i\leq t}\delta_t$,  and $\widetilde{L}_t$ and $\overline{L}(\cdot)$ be defined in \eqref{eq:def-tilde-Lt} and \eqref{eq:def-tilde-L}, respectively. 
Then it holds that 
\begin{equation}\label{eq:alg2-rec}
\varphi(x_{t+1}) \leq  \varphi(x_t) - \frac{\delta_t^*}{4} \min\left\{1, C_t\right\}, \quad \forall t \ge \tilde t_0,
\end{equation}
where 
\begin{equation}\label{C_t}
C_t := \frac{1}{2\overline{L}(\delta_t^*)}\min_{0\leq i \leq t}\frac{\delta_i}{\norm{x_i-v_i}^2}, \quad \forall t \ge 0.
\end{equation}
\end{lemma}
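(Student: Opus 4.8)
The plan is to combine the per-iteration decrease guaranteed by the line search stopping criterion \eqref{phi-reduct} with the bound on $L_t$ from Theorem~\ref{lem:L-est}(ii), and then replace the iteration-dependent quantities $\delta_t$ and $\norm{x_t-v_t}^2$ by the running minimum $\delta_t^*$ and the running infimum appearing in $C_t$. First I would fix $t \ge \tilde t_0$ and recall that, by construction, $\tau_t=\min\{1,\delta_t/(2L_t\norm{x_t-v_t}^2)\}$ and that \eqref{phi-reduct} holds with $(x_{t+1},L_t,\tau_t)$, i.e.
\[
\varphi(x_{t+1}) \le \varphi(x_t) - \tfrac12\tau_t\delta_t + \tfrac12 L_t\tau_t^2\norm{x_t-v_t}^2 .
\]
A routine case analysis on the two branches of the $\min$ defining $\tau_t$ — exactly as in the proof of Lemma~\ref{lem:alg1-rec}, specialized to the quadratic model — yields
\[
\varphi(x_{t+1}) \le \varphi(x_t) - \tfrac14 \delta_t \min\Bigl\{1, \tfrac{\delta_t}{2L_t\norm{x_t-v_t}^2}\Bigr\}.
\]
(When $\tau_t=1$ the quadratic term is controlled by $\tfrac12\delta_t$ via $L_t\norm{x_t-v_t}^2\le\tfrac12\delta_t$, giving a decrease of at least $\tfrac14\delta_t$; when $\tau_t<1$ one gets exactly $\tfrac{\delta_t^2}{8L_t\norm{x_t-v_t}^2}$.)

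Next I would invoke Theorem~\ref{lem:L-est}(ii): since $t\ge\tilde t_0 \ge (\log_2(L_{-1}/\widetilde L_0))_+$, we have $L_t \le 2\max_{0\le i\le t}\widetilde L_i$. Then, arguing as in the proof of Theorem~\ref{lem:L-est}(iii), each $\widetilde L_i \le \overline L(\delta_i) \le \overline L(\delta_t^*)$ because $\overline L(\cdot)$ is non-increasing and $\delta_i \ge \delta_t^*$ for $i\le t$; hence $L_t \le 2\overline L(\delta_t^*)$. Substituting this upper bound for $L_t$ into the decrease inequality (the right-hand side is decreasing in $L_t$ inside the $\min$), and then using $\delta_t \ge \delta_t^*$ in the leading factor together with
\[
\frac{\delta_t}{\norm{x_t-v_t}^2} \ge \min_{0\le i\le t}\frac{\delta_i}{\norm{x_i-v_i}^2},
\]
gives
\[
\varphi(x_{t+1}) \le \varphi(x_t) - \frac{\delta_t^*}{4}\min\Bigl\{1,\ \frac{1}{2\overline L(\delta_t^*)}\min_{0\le i\le t}\frac{\delta_i}{\norm{x_i-v_i}^2}\Bigr\} = \varphi(x_t) - \frac{\delta_t^*}{4}\min\{1,C_t\},
\]
which is precisely \eqref{eq:alg2-rec}.

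The main obstacle, and the step deserving the most care, is the chain of monotonicity substitutions: one must be sure that replacing $\delta_t$ by $\delta_t^*$, $L_t$ by $2\overline L(\delta_t^*)$, and $\delta_t/\norm{x_t-v_t}^2$ by its running infimum all move the bound in the same (weakening) direction, keeping in mind that $\delta_t^*$ appears both as the leading multiplicative factor and inside $\overline L(\delta_t^*)$ in the denominator — and $\overline L$ being non-increasing means $\overline L(\delta_t^*) \ge \overline L(\delta_t)$, so this substitution indeed only decreases the right-hand side. A secondary point is to confirm that the constant $\tfrac14$ (rather than $\tfrac{\nu}{1+\nu}$ as in Algorithm~\ref{alg:FW-exact-ls}) is the correct outcome of the quadratic-model case analysis; this is elementary but should be done explicitly so that the constants propagate correctly into Theorems~\ref{th:main1} and \ref{th:main2}.
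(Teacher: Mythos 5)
Your strategy is exactly the paper's: start from the accepted line-search inequality \eqref{phi-reduct}, do the two-branch case analysis on $\tau_t$, bound $L_t$ by $2\overline{L}(\delta_t^*)$ via Theorem~\ref{lem:L-est}(ii), the bound $\widetilde L_i\le\overline L(\delta_i)$, and the monotonicity of $\overline{L}(\cdot)$, and then pass to the running minima. All of your monotonicity substitutions move in the correct (weakening) direction, and your per-step bound
\[
\varphi(x_{t+1}) \le \varphi(x_t) - \tfrac{1}{4}\delta_t\min\Bigl\{1,\tfrac{\delta_t}{2L_t\norm{x_t-v_t}^2}\Bigr\}
\]
is the correct consequence of \eqref{phi-reduct}: in the interior branch the decrease is $\delta_t^2/(8L_t\norm{x_t-v_t}^2)=\tfrac{\delta_t}{4}\cdot\tfrac{\delta_t}{2L_t\norm{x_t-v_t}^2}$, exactly as you compute.

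The gap is in your last substitution. From $\tfrac{\delta_t}{2L_t\norm{x_t-v_t}^2}$ and $L_t\le 2\overline{L}(\delta_t^*)$ you only obtain a lower bound of $\tfrac{1}{4\overline{L}(\delta_t^*)}\min_{0\le i\le t}\delta_i/\norm{x_i-v_i}^2 = C_t/2$, so your argument establishes \eqref{eq:alg2-rec} with $\min\{1,C_t/2\}$ rather than $\min\{1,C_t\}$; the final display does not follow from your own intermediate inequality. Be aware that the paper's proof contains the mirror image of the same factor-of-two slip: its intermediate inequality \eqref{eq:alg2-rec1} asserts a decrease of $\tfrac{\delta_t}{4}\min\{1,\delta_t/(L_t\norm{x_t-v_t}^2)\}$, which likewise does not follow from \eqref{phi-reduct} (take $\delta_t=L_t\norm{x_t-v_t}^2$: the guaranteed decrease is $\delta_t/8$, not $\delta_t/4$). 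So the lemma as literally stated is reached in the paper only through that overstated step; the cleanly provable version has $\tfrac{1}{4\overline{L}(\delta_t^*)}$ in place of $\tfrac{1}{2\overline{L}(\delta_t^*)}$ in \eqref{C_t}. Either fix---restating $C_t$ or carrying the extra $\tfrac12$ forward---only perturbs absolute constants in Theorems~\ref{th:main1} and \ref{th:main2}, but as written your concluding inequality needs this correction.
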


\begin{proof}
One can observe from Algorithm~\ref{alg:main} that 
\[
\tau_t=\min\left\{1, \frac{\delta_t}{2 L_t\norm{x_t-v_t}^2}\right\}, \quad 
\varphi(x_{t+1}) \leq \varphi(x_t)-\frac{1}{2}\tau_t\delta_t + \frac{1}{2}L_t\tau_t^2\norm{x_t-v_t}^2, \quad \forall t \ge 0.
\]
It then follows that
\begin{equation}\label{eq:alg2-rec1}
\varphi(x_{t+1}) \leq \varphi(x_t) - \frac{\delta_t}{4} \min\left\{1, \frac{\delta_t}{L_t\norm{x_t-v_t}^2}\right\},\quad \forall t \geq 0.
\end{equation}
Recall from the proof of Theorem~\ref{lem:L-est}\,(iii) that $\widetilde{L}_t \le \overline{L}(\delta_t)$ for all $t \ge 0$. Using this, Theorem~\ref{lem:L-est}\,(ii), $\delta_t^* = \min_{0\leq i\leq t}\delta_t$, and the monotonicity of $\overline{L}(\cdot)$, we have
$$L_t\leq 2\max_{0\leq i\leq t}\widetilde{L}_i \le 2\max_{0\leq i\leq t}\overline{L}(\delta_i)
= 2\overline{L}(\delta_t^*),\quad \forall t \geq \tilde t_0.
$$
The conclusion then follows from this, \eqref{eq:alg2-rec1}, and $\delta_t^* = \min_{0\leq i\leq t}\delta_t$.
\end{proof}

We are now ready to prove Theorems~\ref{th:main1} and \ref{th:main2}.

%

\vskip 0.1in
\noindent
\textbf{Proof of Theorem~\ref{th:main1}}~~
One can observe that $x_t, v_t \in \dom{g}$. It then follows that $\|x_t-v_t\| \le D_g$ for all $t\geq 0$. By this and $\delta_t^* = \min_{0\leq i\leq t}\delta_i$, one has
\begin{equation}\label{eq:pr-main-th-ratio-bound1}
\frac{\delta_i}{\norm{x_i-v_i}^2} \geq \frac{\delta_t^*}{D_g^2}, \quad \forall 0\le i \le t.
\end{equation}
Let $C_t$ be defined in \eqref{C_t}. We next bound $C_t$ from below by considering two cases in view of the definition of $\overline{L}(\cdot)$ in \eqref{eq:def-tilde-L}.

Case 1) $\overline{L}(\delta_t^*)=\left(\frac{2(1-\nu)}{1+\nu}\right)^{\frac{1-\nu}{2\nu}}M_\nu^{\frac{1}{\nu}}
\left(\frac{D_g}{\delta_t^*}\right)^{\frac{1-\nu}{\nu}}$. By this, \eqref{C_t} and \eqref{eq:pr-main-th-ratio-bound1}, we obtain that 
\[
C_t \geq
\frac{1}{2}
\left(\frac{2(1-\nu)}{1+\nu}\right)^{-\frac{1-\nu}{2\nu}}
M_\nu^{-\frac{1}{\nu}}\left(\frac{D_g}{\delta_t^*}\right)^{-\frac{1-\nu}{\nu}}
\cdot
\frac{\delta_t^*}{D_g^2}
=
2^{-\frac{1+\nu}{2\nu}}\left(\frac{1-\nu}{1+\nu}\right)^{-\frac{1-\nu}{2\nu}}
\left(\frac{\delta_t^*}{M_\nu D_g^{1+\nu}}\right)^{\frac{1}{\nu}}
=:D_t.
\]

Case 2) $\overline{L}(\delta_t^*) = \left(\frac{1-\nu}{1+\nu}\frac{1}{\delta_t^*}\right)^{\frac{1-\nu}{1+\nu}}{M_\nu^{\frac{2}{1+\nu}}}$. By this, \eqref{C_t} and \eqref{eq:pr-main-th-ratio-bound1}, one has 
\[
C_t \geq
\frac{1}{2}
\left(\frac{1-\nu}{1+\nu}\frac{1}{\delta_t^*}\right)^{-\frac{1-\nu}{1+\nu}}{M_\nu^{-\frac{2}{1+\nu}}}
\cdot
\frac{\delta_t^*}{D_g^2}
=
\frac{1}{2}\left(\frac{1-\nu}{1+\nu}\right)^{-\frac{1-\nu}{1+\nu}}
\left(\frac{\delta_t^*}{M_\nu D_g^{1+\nu}}\right)^{\frac{2}{1+\nu}}=D_t^{\frac{2\nu}{1+\nu}}.
\]
Combining these two cases, and using \eqref{eq:def-tilde-L} and \eqref{C_t}, we conclude that $C_t \geq \min\{D_t, D_t^{\frac{2\nu}{1+\nu}}\}$.
By this and $2\nu/(1+\nu) \leq 1$, one can observe that
$$
\min\{1,C_t\} \geq \min\{1, D_t\}, \quad \forall t \ge 0.
$$
In view of this, \eqref{eq:alg2-rec}, and the expression of $D_t$, one has 
\begin{align}\label{exact-ls-bound-3}
\varphi(x_{t+1})
&\leq \varphi(x_t) - \frac{\delta_t^*}{4}\min\left\{1, 2^{-\frac{1+\nu}{2\nu}}\left(\frac{1-\nu}{1+\nu}\right)^{-\frac{1-\nu}{2\nu}}
\left(\frac{\delta_t^*}{M_\nu D_g^{1+\nu}}\right)^{\frac{1}{\nu}}\right\}, \nonumber\\
&\leq \varphi(x_t) - \frac{\delta_t^*}{4}\min\left\{1, 
\left(\frac{\delta_t^*}{2M_\nu D_g^{1+\nu}}\right)^{\frac{1}{\nu}}\right\},
\quad \forall t \geq \tilde t_0,
\end{align}
where the last inequality is due to $2^{\frac{1+\nu}{2\nu}} \leq 2^{\frac{1+1}{2\nu}}$ and $\frac{1-\nu}{1+\nu}\leq 1$.

(i) It follows from \eqref{eq:alg2-rec1} that $\{\varphi(x_t)\}$ is non-increasing, which, together with the fact that $\varphi(x_t) \ge \varphi^*$ for all $t \ge 0$, implies that $\varphi_* =\lim_{t\to \infty}\varphi(x_t)$ exists. In addition,  one can observe from  
\eqref{exact-ls-bound-3} that \eqref{eq:lem-rec} holds for $\beta_t=\delta^*_{t+\tilde t_0}$, $\gamma_t=\varphi(x_{t+\tilde t_0})-\varphi_*$, $\alpha =1/\nu$, $c=1/4$, and $A=(2M_\nu D_g^{1+\nu})^{\frac{1}{\nu}}$. The inequality \eqref{delta*-uppbnd-3} then directly follows from Lemma~\ref{lem:rec}.

(ii) One can observe from  \eqref{exact-ls-bound-3} that \eqref{eq:lem-rec} holds for $\beta_t=\delta^*_{t+\tilde t_0}$, $\gamma_t=\varphi(x_{t+\tilde t_0})-\varphi^*$, $\alpha =1/\nu$, $c=1/4$, and $A=(2M_\nu D_g^{1+\nu})^{\frac{1}{\nu}}$. In addition, by Lemma~\ref{lem:FW-gap-bound-convex}, and the monotonicity of $\{\varphi(x_t)\}$, one has 
\[
\delta_t^*= \min_{0\leq i\leq t}\delta_i \ge \min_{0\leq i\leq t} \left\{ \varphi(x_i) -\varphi^*\right\} = \varphi(x_t)-\varphi^*, \quad \forall t \ge 0.
\]
Hence, $\beta_t \ge \gamma_t$ for all $t\ge 0$. The conclusion of this statement then follows from Lemma~\ref{lem:rec-conv}\,(ii).
\hfill$\blacksquare$

%

\vskip 0.1in
\noindent
\textbf{Proof of Theorem~\ref{th:main2}}~~
It follows from \eqref{delta_t-lwrbnd} and $\delta_t^*= \min_{0\leq i\leq t}\delta_i$ that 
\begin{equation}\label{eq:pr-main-th-ratio-bound2}
\frac{\delta_i}{\norm{x_i-v_i}^2} = \left(\frac{\delta_i}{\norm{x_i-v_i}^\rho}\right)^{\frac{2}{\rho}}\delta_i^{1-\frac{2}{\rho}}
\geq
\left(\frac{\kappa}{\rho}\right)^{\frac{2}{\rho}}\delta_i^{1-\frac{2}{\rho}} \geq
\left(\frac{\kappa}{\rho}\right)^{\frac{2}{\rho}}(\delta_t^*)^{1-\frac{2}{\rho}}, \quad \forall  0\le i \le t.
\end{equation}
Let $C_t$ be defined by \eqref{C_t}. We next bound $C_t$ from below by considering two cases in view of the definition of $\overline{L}(\cdot)$ in \eqref{eq:def-tilde-L}.

Case 1) $\overline{L}(\delta_t^*)=\left(\frac{2(1-\nu)}{1+\nu}\right)^{\frac{1-\nu}{2\nu}}M_\nu^{\frac{1}{\nu}}
\left(\frac{\rho}{\kappa (\delta_t^*)^{\rho-1}}\right)^{\frac{1-\nu}{\nu\rho}}$.
By this, \eqref{C_t} and \eqref{eq:pr-main-th-ratio-bound2}, we obtain that 
\begin{align*}
C_t &\geq \frac{1}{2} \left(\frac{2(1-\nu)}{1+\nu}\right)^{-\frac{1-\nu}{2\nu}}
M_\nu^{-\frac{1}{\nu}}
\left(\frac{\rho}{\kappa (\delta_t^*)^{\rho-1}}\right)^{-\frac{1-\nu}{\nu\rho}}
\cdot
\left(\frac{\kappa}{\rho}\right)^{\frac{2}{\rho}}(\delta_t^*)^{1-\frac{2}{\rho}}
\\&=
2^{-\frac{1+\nu}{2\nu}}\left(\frac{1-\nu}{1+\nu}\right)^{-\frac{1-\nu}{2\nu}}
\left[
	\frac
	{
		\left(\frac{\kappa}{\rho}\right)^{\frac{1+\nu}{\rho}}
		(\delta_t^*)^{\frac{\rho-(1+\nu)}{\rho}}
	}
	{M_\nu}
\right]^{\frac{1}{\nu}}=:E_t.
\end{align*}

Case 2) $\overline{L}(\delta_t^*) = \left(\frac{1-\nu}{1+\nu}\frac{1}{\delta_t^*}\right)^{\frac{1-\nu}{1+\nu}}{M_\nu^{\frac{2}{1+\nu}}}$.
By this, \eqref{C_t} and \eqref{eq:pr-main-th-ratio-bound2}, one has 
\begin{align*}
C_t &\geq
\frac{1}{2} \left(\frac{1-\nu}{1+\nu}\frac{1}{\delta_t^*}\right)^{-\frac{1-\nu}{1+\nu}}{M_\nu^{-\frac{2}{1+\nu}}}
\cdot
\left(\frac{\kappa}{\rho}\right)^{\frac{2}{\rho}}(\delta_t^*)^{1-\frac{2}{\rho}}
\\&=
\frac{1}{2}\left(\frac{1-\nu}{1+\nu}\right)^{-\frac{1-\nu}{1+\nu}}
\left[
	\frac
	{
		\left(\frac{\kappa}{\rho}\right)^{\frac{1+\nu}{\rho}}
		(\delta_t^*)^{\frac{\rho-(1+\nu)}{\rho}}
	}
	{M_\nu}
\right]^{\frac{2}{1+\nu}}=E_t^{\frac{2\nu}{1+\nu}}.
\end{align*}
Combining these two cases, and using \eqref{eq:def-tilde-L} and \eqref{C_t}, we conclude that $C_t \geq \min\{E_t, E_t^{\frac{2\nu}{1+\nu}}\}$. By this and $2\nu/(1+\nu) \leq 1$, one can observe that
$$
\min\{1,C_t\} \geq \min\{1, E_t\}, \quad \forall t \ge 0.
$$
In view of this, \eqref{eq:alg2-rec}, and the expression of $E_t$, one has 
\begin{align}
\varphi(x_{t+1})
&\leq
\varphi(x_t) - \frac{\delta_t^*}{4}\min\left\{1, 
2^{-\frac{1+\nu}{2\nu}}\left(\frac{1-\nu}{1+\nu}\right)^{-\frac{1-\nu}{2\nu}}
\left[
	\frac
	{
		\left(\frac{\kappa}{\rho}\right)^{\frac{1+\nu}{\rho}}
		(\delta_t^*)^{\frac{\rho-(1+\nu)}{\rho}}
	}
	{M_\nu}
\right]^{\frac{1}{\nu}}
\right\}
\nonumber\\
&\leq
\varphi(x_t) - \frac{\delta_t^*}{4}\min\left\{1, 
\left[
	\frac
	{
		\left(\frac{\kappa}{\rho}\right)^{\frac{1+\nu}{\rho}}
		(\delta_t^*)^{\frac{\rho-(1+\nu)}{\rho}}
	}
	{2M_\nu}
\right]^{\frac{1}{\nu}}
\right\},
\quad \forall t \geq \tilde t_0,
\label{eq:adap-ls-rec-uc}
\end{align}
where the last inequality is due to $2^{\frac{1+\nu}{2\nu}} \leq 2^{\frac{1+1}{2\nu}}$ and $\frac{1-\nu}{1+\nu}\leq 1$.

(i) In view of \eqref{eq:alg2-rec1} and $\varphi(x_t) \geq \varphi^* \in \RR$, the sequence $\{\varphi(x_t)\}$ is non-increasing and $\varphi_* =\lim_{t\to \infty}\varphi(x_t)$ exists. In addition,  one can observe from  \eqref{eq:adap-ls-rec-uc} that \eqref{eq:lem-rec} holds for $\beta_t=\delta^*_{t+\tilde t_0}$, $\gamma_t=\varphi(x_{t+\tilde t_0})-\varphi_*$, 
$\alpha=(\rho-1-\nu)/(\rho\nu)$, $c=1/4$, and $A = \left(\frac{\rho}{\kappa}\right)^{\frac{1+\nu}{\rho\nu}}(2M_\nu)^{\frac{1}{\nu}}$.
The inequality \eqref{delta*-uppbnd-4} then directly follows from Lemma~\ref{lem:rec}.

(ii) One can observe from  \eqref{eq:adap-ls-rec-uc} that \eqref{eq:lem-rec} holds for $\beta_t=\delta^*_{t+\tilde t_0}$, $\gamma_t=\varphi(x_{t+\tilde t_0})-\varphi^*$, 
$\alpha=(\rho-1-\nu)/(\rho\nu)$, $c=1/4$, and $A = \left(\frac{\rho}{\kappa}\right)^{\frac{1+\nu}{\rho\nu}}(2M_\nu)^{\frac{1}{\nu}}$.
The rest of the proof is the same as that of Theorem~\ref{th:main1}\,(ii).
\hfill$\blacksquare$

\section{Concluding Remarks}
\label{sec:conclusion}

In this paper we first analyzed iteration complexity of a parameter-dependent conditional gradient method for solving problem~\eqref{main-prob}, whose step sizes depend explicitly on the problem parameters. We then proposed a novel parameter-free conditional gradient method for solving \eqref{main-prob} without using any prior knowledge of the problem parameters and showed that it enjoys the same order of iteration complexity as the the parameter-dependent conditional gradient method. Preliminary numerical experiments demonstrate the practical superiority of our parameter-free conditional gradient method over the other variants.

It shall be mentioned that our proposed method requires a pre-specified norm and thus it is norm-dependent. In contrast, some existing conditional gradient methods \cite[e.g.,][]{Jaggi11,Lac16,Pena22} are norm-independent. It would be interesting to develop a parameter-free but norm-independent conditional gradient method achieving the same complexity bounds as obtained this paper for solving \eqref{main-prob}. This is left for future research.

\subsection*{Acknowledgements}
The authors would like to thank the anonymous referees for their valuable comments that improved the quality of the paper.
The first author's research is supported by the Grant-in-Aid for Early-Career Scientists (JP21K17711) from Japan Society for the Promotion of Science. The work of the second author was partially supported by NSF Award IIS-2211491.

\vskip 0.2in
\bibliography{cgm}

\end{document}